\theoremstyle{plain}
\numberwithin{equation}{section}
\newtheorem{thm}{{\sc Theorem}}[section]
\newtheorem{cor}[thm]{{\sc Corollary}}
\newtheorem{lem}[thm]{{\sc Lemma}}
\newtheorem{df}[thm]{{\sc Definition}}
\newcommand{\comm}[1]{}
\newcommand{\n}{\hbox{${\cal N}$}}
\newcommand{\K}{\hbox{${\cal K}$}}
\newcommand{\Gr}{\hbox{${\cal G}r$}}
\newcommand{\oi}{\hbox{${\mathcal O}$}}
\newcommand{\la}{\hbox{$\Lambda$}}
\newcommand{\m}{\hbox{${\cal M}$}}
\newcommand{\LR}{\hbox{Little\-wood-Richard\-son}}
\begin{document}
\title{An Elementary Construction of a Hive Associated to a Hermitian Matrix Pair, with Applications}
\author{Glenn D.\ Appleby, Tamsen Whitehead\\
Department of Mathematics\\
Santa Clara University\\
Santa Clara, CA 95053\\
{\tt gappleby@scu.edu, tmcginley@scu.edu}}
\date{ }
\newpage

\noindent {\bf Hives Determined by Pairs in the Affine Grassmannian over Discrete Valuation Rings}

\medskip

\noindent
Glenn D. Appleby\\
 Tamsen Whitehead\\
{\em Department of Mathematics\\
and Computer Science,\\
Santa Clara University\\
Santa Clara,  CA 95053}\\
gappleby@scu.edu, tmcginley@scu.edu

\noindent \mbox{} \hrulefill \mbox{}
\begin{abstract}
Let ${\mathcal O}$ be a discrete valuation ring with quotient field ${\cal K}$. The \emph{affine Grassmannian} ${\cal G}r$ is the set of full-rank ${\mathcal O}$-modules contained in ${\cal K}^n$.
Given $\Lambda \in {\cal G}r$, invariant factors $inv(\Lambda)=\lambda \in {\mathbb Z}^n$ stratify ${\cal G}r$. Left-multiplication by $GL_{n}({\cal K})$  stratifies ${\cal G}r \times {\cal G}r$ where $inv(N,\Lambda) = \mu$ if $(N,\Lambda)$ and $(I_{n} ,M)$ are in the same $GL_{n}({\cal K})$ orbit, and $inv(M) = \mu$. We present an elementary map from ${\cal G}r \times {\cal G}r$ to hives (in the sense of Knutson and Tao) of type $(\mu,\nu,\lambda)$ where $inv(N,\Lambda) = \mu$, $inv(N) = \nu$, and $inv(\Lambda) = \lambda$. Earlier work by the authors \cite{Ours-matrix to filling} determined Littlewood-Richardson fillings from matrix pairs over certain rings $\oi$, and later Kamnitzer \cite{kam} utilized properties of MV polytopes to define a map from $\Gr \times \Gr$ to hives over ${\mathcal O} = {\mathbb C}[[t]]$. Our proof uses only linear algebra methods over any discrete valuation ring, where hive entries are minima of sums of orders of invariant factors over certain submodules. Our map is analogous to a conjectured construction of hives from Hermitian matrix pairs due to Danilov and Koshevoy \cite{DK}.

\end{abstract}

\noindent \mbox{} \hrulefill \mbox{}
\section{Introduction} Let $\oi$ be a discrete valuation ring with quotient field $\K$. The \emph{affine Grassmannian} over $\oi$, denoted $\Gr$, is the set of full-rank $\oi$ modules contained in $\K^n$ (sometimes called \emph{lattices} in $\K^n$). The left quotient $GL_{n}(\oi) \backslash GL_{n}(\K)$ may be identified with $\Gr$ by associating to each coset in $GL_{n}(\oi) \backslash GL_{n}(\K)$ the common $\oi$-module spanned by the columns of any element in the coset. For elements $\Lambda \in \Gr$, the invariant factors $inv(\Lambda) = \lambda =(\lambda_{1}, \lambda_2, \ldots , \lambda_n)$ where $\lambda_{i} \in {\mathbb Z}$ and $\lambda_i \geq \lambda_{i+1}$ allow us to stratify $\Gr$. Letting $Gl_{n}( \K)$ act by right-multiplication on pairs $(N,\Lambda) \in \Gr \times \Gr$ also allows us to stratify orbits similarly by setting $inv(\n,\Lambda) = \mu$ whenever $(\n,\Lambda)$ and $(I_{n} ,\m)$ are in the same $GL_{n}(\K)$ orbit, and $inv(\m) = \mu$.

This paper determines a hive, in the sense of \cite{knut}, from a given pair $(\n,\la) \in \Gr \times \Gr$ over an arbitrary discrete valuation ring $\oi$. Hives are triangular arrays of integers $\{ h_{st} \}$ satisfying certain linear inequalities. These discrete objects are used to count or classify objects from other areas of mathematics, particularly in representation theory and algebraic combinatorics. There exist simple linear bijections (see~\cite{pak}) given by integer-valued matrices that transform hives to the well-known class of \LR\ fillings of skew tableaux, which often serve the same purpose. It becomes a matter of convenience which class of objects to use, and the determination is made on the basis of how naturally the combinatorial object may be identified with objects in some other class of problems.

Let us fix a uniformizing parameter $t \in \oi$, and let $n \times n$ matrices $M, N \in GL_{n}(\oi)$. The orders (with respect to $t$) of the invariant factors of $M$ and $N$ may be denoted by non-increasing sequences of \emph{non-negative} integers, which we shall call the \emph{invariant partitions} uniquely determined by these matrices. Denote the invariant partition of $M$ by $inv(M)=\mu$, that of $N$ by $inv(N) = \nu$, and that of the product $MN$ by $inv(MN)=\lambda$. Well-known results~\cite{klein,mac} show that the triple of invariant partitions $(\mu, \nu, \lambda)$ may be realized by matrices $M$, $N$, and $MN$ if and only if the \emph{Littlewood-Richardson coefficient} $c_{\mu \nu}^{\lambda} \neq 0$. This coefficient may also be combinatorially determined by the \emph{Littlewood-Richardson Rule}, which states that $c_{\mu \nu}^{\lambda}$ equals the number \emph{Littlewood-Richardson fillings} of a skew-shape $\lambda / \mu$ with content $\nu$ (we say the filling is of \emph{type} $(\mu,\nu, \lambda)$).

In ~\cite{Ours-matrix to filling}, the authors proved a determinantal formula to compute a Littlewood-Richardson filling from a pair $(M,N)$ in $GL_{n}(\oi)$, under some hypotheses regarding the discrete valuation ring $\oi$. The first author's work \cite{me} constructed matrix realizations $(M,N)$ from an arbitrary \LR\ filling of type $(\mu, \nu, \lambda)$. In the authors' work \cite{lrreal} these results were extended to matrix pairs $(\m,\n)$ in $GL_{n}(\K)$ by defining \LR\ fillings admitting some negative, real-valued entries, also showing that this matrix setting realized combinatorial bijections establishing $c_{\mu \nu}^{\lambda} = c_{\nu \mu}^{\lambda}$. In Kamnitzer \cite{kam} these issues were formulated in terms of elements of the affine Grassmannian $\Gr$ over $\oi = {\mathbb C}[[t]]$, where from each pair $(\n , \la) \in \Gr$ a hive (defined below) was determined, among other results.

Our method for determining a hive from pairs $(\n , \la) \in \Gr \times \Gr$ is to compute the maxima of the orders (sums of invariant factors) of certain submodules of $\n$ and $\la$, subject to specified constraints on their ranks (an alternate formula is also obtained using \emph{minima} of a different collection of submodules). The formula for these maxima is quite similar to a conjectured formula put forward by Danilov and Koshevoy in a different (but related) setting.
 Danilov and Koshevoy~\cite{DK} conjectured that if $M$ and $\Lambda$ were two $n\times n$ Hermitian matrices (over ${\mathbb C}$), one may obtain a hive $\{ h_{st} \}$, for $0 \leq s \leq t \leq n$ by setting
\[ h_{st} = \max_{U_{s}\oplus V_{t-s}} tr(\Lambda|_{U_{s}}) + tr(M|_{V_{t-s}}), \]
 where $U_{s}$ and $V_{t-s}$ are orthogonal subspaces of ranks $s$ and $t-s$, respectively, and $tr(M|_{U_{s}})$ denotes the trace of $M$ restricted to the subspace $U_{s}$, etc.

 Our formula replaces the trace of a Hermitian matrix restricted to an $s$-dimensional subspace with the sums of orders of invariant factors of a rank-$s$ submodule. A similar construction (in the context of tropical geometry) was studied by Speyer~\cite{speyer}. In the authors' abstract~\cite{herm2hive} we described efforts to use the formula of Danilov and Koshevoy in the context of continuous deformations of eigenvalues of Hermitian matrix pairs (by rotations of axes of eigenvectors) to generate (by means of the formula) paths in an associated ${\mathfrak sl}_{n}$ crystal. However, while the examples of connections between Hermitian spectral deformation and crystals remain intriguing, our purported proof of the hive formula in the Hermitian case as stated in the abstract was in error.

In this paper, our proof of the hive formula for affine Grassmannians is based on an analysis of the determinantal formulas appearing in the authors' earlier work~\cite{Ours-matrix to filling}. However, our formula below, and its proof, are substantially simpler. Furthermore, we are able to relax the hypotheses on $\oi$ appearing in that earlier work, so that our hive construction is defined over an arbitrary discrete valuation ring.  In fact, the proof presented here could be applied with little change to a valuation ring with ${\mathbb R}$ as the valuation group.

\medskip
Let us state our main result.
Given some $\oi$-submodule $U \subseteq K^n$, we will let $\| U \|$ denote the sums of the orders of the invariant factors of $U$ (precise definitions given below). With this, we shall prove:

\begin{thm}
\label{main theorem}
  Let $\Lambda, \n \in \Gr$ be two full $\oi$-lattices of $K^n$. Let the invariant partition of $\n$ be $inv(\n) = \nu = (\nu_{1} , \ldots , \nu_{n})$ and $inv(\Lambda) = \lambda = (\lambda_{1} , \ldots , \lambda_{n})$. Suppose $(I,{\cal M})$ and $(\n,\Lambda)$ are in the same $GL_{n}(\K)$ orbit, where $inv(\m) = \mu =  (\mu_{1}, \ldots , \mu_{n} )$. Let $| \lambda | = \lambda_{1} + \cdots + \lambda_{n}$. Below, let $\la_{s}$ denote an $\oi$-submodule of $\la$ of rank $s$, and $\n_{t}$ denote an $\oi$-submodule of rank $t$ of $\n$, etc.

   Then the numbers $\{ h_{st} \}$, where
   \begin{align}   h_{st} &=  \  | \lambda | - \min_{ \Lambda_{n-t} \oplus {\scriptstyle{\cal N}_{t-s}}} \!\!\big(  \left\| \Lambda_{n-t} \oplus \n_{t-s} \right\| \big) \label{mu,nu,lambda min} \\
   & = \qquad \max_{\Lambda_{s} \oplus  {\cal M}_{t-s}} \!\!\left( \left\|  \Lambda_{s} \oplus  {\cal M}_{t-s} \right\| \right)\label{mu,nu,lambda max} 
     \end{align}
     form a hive of type $(\mu,\nu,\lambda)$, and the numbers $\{ h_{st}' \}$, where

\begin{align}   h_{st}' &=  \  | \lambda | - \min_{\substack{ \Lambda_{n-t} \oplus {\scriptstyle{\cal M}_{t-s}}}} \!\!\big(  \left\| \Lambda_{n-t}\oplus {\cal M}_{t-s} \right\| \big) \label{nu,mu,lambda min}\\
   & = \qquad \max_{\Lambda_{s} \oplus  {\scriptstyle{\cal N}_{t-s}}} \!\!\left( \left\|  \Lambda_{s} \oplus  \n_{t-s} \right\| \right) \label{nu,mu,lambda max}
     \end{align}
  form a hive of type $(\nu,\mu,\lambda)$.
\label{full theorem}
\end{thm}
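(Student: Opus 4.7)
The plan is to proceed in three stages. The first stage is to fix convenient matrix representatives: because $(\n, \la)$ and $(I,\m)$ lie in one $GL_n(\K)$-orbit, one can choose $N, M \in GL_n(\K)$ with $\n = N\oi^n$, $\m = M\oi^n$, and $\la = NM\oi^n$, having invariant partitions $\nu$, $\mu$, and $\lambda$ respectively. With these representatives in hand, a rank-$s$ submodule $\la_s \subseteq \la$ is the column $\oi$-span of an $n\times s$ submatrix of $NM$, and $\m_{t-s} \subseteq \m$ is the column $\oi$-span of an $n \times (t-s)$ submatrix of $M$. The quantity $\| \la_s \oplus \m_{t-s} \|$ then becomes the $t$-adic order of the greatest common divisor of the maximal minors of the $n \times t$ block matrix assembled from these columns. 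This reduction ties the new formulas directly to the determinantal machinery of~\cite{Ours-matrix to filling}, against whose main results they can be checked.

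The second stage is to establish that the minimum and maximum formulations agree. Given a rank-$(n-s)$ direct-sum decomposition $U = \la_{n-t} \oplus \n_{t-s}$ inside $\la + \n$, one constructs a canonical rank-$t$ submodule $V = \la_s \oplus \m_{t-s}$ by applying the element $g \in GL_n(\K)$ that realises the orbit equivalence (so $g\n = \oi^n$ and $g\la = \m$) and taking $\oi$-module complements before transporting back. The crucial step is proving an additivity identity
\[ \| U \| + \| V \| = |\lambda| \]
valid for optimisers on both sides; this converts a minimum over $U$ into a maximum over $V$ (up to the constant $|\lambda|$).

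The third stage is to verify the hive axioms. Boundary conditions are straightforward: the maximum of $\|U\|$ over rank-$k$ submodules of a lattice with invariant partition $\pi$ equals $\pi_1 + \cdots + \pi_k$, so at $s=0$ the formula gives the $\mu$-boundary, at $s=t$ the $\lambda$-boundary, and at $t=n$ the $\nu$-boundary. For the rhombus (concavity) inequalities, the plan is to take optimal direct-sum configurations witnessing the two ``larger'' entries of a rhombus and combine them by intersection and sum; the submodularity inequality $\|U\| + \|V\| \geq \|U + V\| + \|U \cap V\|$ for lattices over a DVR (a standard consequence of Smith normal form) then produces candidate configurations whose norms bound the two ``smaller'' entries of the rhombus, yielding the required inequality.

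I expect the main obstacle to lie in the rhombus step: after intersecting and summing two optimally chosen direct-sum configurations, one must verify that the resulting modules still admit direct-sum decompositions of the prescribed ranks drawn from $\la$ and $\m$ (or from $\la$ and $\n$). This will require a careful choice of bases adapted simultaneously to $\la$ and $\m$, together with a genericity argument showing that the extrema are attained on a sufficiently open locus of configurations where such compatible bases exist. Finally, the companion hive of type $(\nu, \mu, \lambda)$ follows by the symmetric argument that swaps the roles of $\n$ and $\m$, reflecting at the level of the construction the classical symmetry $c_{\mu\nu}^\lambda = c_{\nu\mu}^\lambda$.
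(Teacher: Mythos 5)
Your Stage 1 (fixing matrix representatives $N$, $M$ with $\la = NM\oi^n$) and your boundary computation in Stage 3 match the paper's setup exactly, and your Stage 2 heuristic (min $= |\lambda| -$ max via a complementarity identity) is the right idea in outline, though the paper's Lemma~\ref{min=max} carries it out in a very specific way: it chooses a complementary module $Y_s$ with $\oi^n = V_{t-s} \oplus U_{n-t} \oplus Y_s$, shows by a delicate exchange argument that the off-diagonal block $(\overline{\la(Y_s)})^{(2)}$ in the resulting block decomposition of $\overline{\la}$ can be taken to be zero when the $\la_{n-t}\oplus\n_{t-s}$ configuration is a minimizer, and then splits $|\lambda|$ into the two complementary matrix orders. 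Your phrase ``applying the element $g$ ... and taking $\oi$-module complements before transporting back'' is nowhere near this level of detail, but the intent is compatible.

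The genuine problem is the rhombus step. The submodularity inequality you invoke,
\[
\|U\| + \|V\| \ \geq\ \|U+V\| + \|U\cap V\|,
\]
is false in general for non-full-rank $\oi$-submodules. Take $n=2$, $U=\langle\langle (1,0)\rangle\rangle$, $V=\langle\langle (1,t)\rangle\rangle$: then $\|U\|=\|V\|=0$ but $U+V=\langle\langle (1,0),(0,t)\rangle\rangle$ has $\|U+V\|=1$ and $U\cap V=0$, so the left side is $0$ and the right side is $1$. (The index formula $\|U\|+\|V\|=\|U+V\|+\|U\cap V\|$ is an \emph{equality} when $U,V$ are full-rank sublattices of $\oi^n$, but the minima in Theorem~\ref{main theorem} are taken over non-full-rank submodules, where the identity breaks.) Beyond the sign of the inequality, there is a structural mismatch you yourself flag as a concern but which is in fact fatal to this route: if $W'=\la'_{a}\oplus\n'_{b}$ and $W''=\la''_{a'}\oplus\n''_{b'}$ are two optimizing configurations, then $W'\cap W''$ and $W'+W''$ have no reason to decompose as a direct sum of a rank-$p$ submodule of $\la$ and a rank-$q$ submodule of $\n$ with the prescribed $(p,q)$, and a generic-position argument cannot repair this because the $\la$- and $\n$-parts interact badly under intersection and sum. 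The paper instead proves a much stronger alignment statement (Lemma~\ref{amalgam}): for two rhombus entries with $t\leq t'$, the minima may be realized simultaneously at submodules $C_t\subseteq C_{t'}$ with $C_t = C_{t'}|^{t'-t+1}_{t'}$, i.e., $C_t$ is spanned by an invariant-adapted basis of $C_{t'}$ corresponding to its $t$ smallest invariant factors. Each rhombus inequality is then established by placing the four realizing configurations in compatible block-upper-triangular form and comparing the invariant factors of two specific diagonal blocks row-by-row, showing that $j-i$ of the $j-i+1$ invariants of one block agree with those of the other, so the largest invariant of one cannot exceed that of the other. This block-matrix comparison, powered by Lemma~\ref{amalgam}, is the actual content of the proof, and it has no analogue in your submodularity strategy. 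To salvage your approach you would need a correct replacement for the rhombus step; as written, the central inequality you appeal to is false.
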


\section{Notation and Preliminary Definitions}

Let $\oi$ be a fixed discrete valuation ring, with multiplicative group of units $\oi^{\times} \subseteq \oi$, and with a fixed uniformizing parameter $t \in \oi$, so that every element $\alpha \in \oi$ may be written $\alpha = u t^k$ for some unit $u \in \oi^{\times}$ and some non-negative integer $k$. We will let $K = \oi[t^{-1}]$ denote the quotient field of the domain $\oi$. Similary, each element in $\beta \in K$ may be expressed $\beta = v t^{k'}$ for some unit $v \in \oi^{\times}$ and some $k' \in {\mathbb Z}$.

\bigskip
\noindent \underline{Notational Convention:} \emph{Since a clear distinction among the various ranks of the submodules employed in Theorem~\ref{main theorem} is essential, we adopt the convention that whenever an $\oi$-module of $K^n$ is written with a \emph{subscript}, the subscript will always denote the \emph{rank} of the submodule, so $U_{k}$ will always mean a submodule of rank $k$. Other means of distinguishing submodules from each other will use different letters or superscripts.}

\bigskip

\begin{df} We write, for $\alpha \in K$:
\[ \| \alpha \| = k \  \Leftrightarrow \ \alpha = ut^k, \ \ u \in \oi^{\times}, k \in {\mathbb Z}. \]
If $\vec{v} \in K^n$, we write
\[ \| \vec{v} \| = k = \max \{ \ell\  | \ \vec{v} \in t^{\ell}\oi^n \} = \min \{ s \ | \  \vec{v} = t^s \vec{v}_{0}, \ \ \hbox{for some $\vec{v}_{0} \in \oi^n$} \}. \]

In the case $\alpha = 0 \in K$, we write $\| 0 \| = \infty$. We will say $\| \alpha \|$ is the \emph{norm} or the \emph{order} of $\alpha \in K$.
\end{df}

The following result is standard:

\begin{thm}
Let $V_{k}$ be a rank $k$ submodule over $\oi$ contained in $\K^n$. Then there exists a basis $\{ \vec{u}_{1}, \ldots , \vec{u}_{n} \}$ of $\oi^n$, and integers $\alpha_{1}, \ldots , \alpha_{k}$ such that
\begin{enumerate}
  \item ${\cal B}(V_{k})=\{ t^{\alpha_{1}} \vec{u}_{1}, \ldots , t^{\alpha_{k}} \vec{u}_{k} \}$ is an $\oi$-module basis of $V_{k}$.
  \item $\alpha_{1} \geq \alpha_{2} \geq \cdots \geq \alpha_{k}$.
  \item The numbers $\alpha_{1} , \ldots , \alpha_{k}$ are uniquely determined by $V_{k}$.
\end{enumerate}\label{adapted basis theorem}
\end{thm}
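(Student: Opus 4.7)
The plan is to deduce this from the standard Smith normal form (equivalently, the elementary divisor theorem) for finitely generated modules over the PID $\oi$, after first clearing denominators. Since $V_{k}$ is finitely generated and sits in $\K^n$, there exists $N \geq 0$ such that $t^N V_{k} \subseteq \oi^n$ (take $N$ to be at least $-\min \| \cdot \|$ over any finite generating set). The submodule $t^N V_{k}$ is then a rank-$k$ submodule of the free $\oi$-module $\oi^n$, which is the setting where the elementary divisor theorem applies directly.

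Next I would apply that theorem: there is a basis $\{\vec{w}_{1}, \ldots , \vec{w}_{n}\}$ of $\oi^n$ and elements $d_{1} \mid d_{2} \mid \cdots \mid d_{k}$ of $\oi$ such that $\{d_{1}\vec{w}_{1}, \ldots, d_{k}\vec{w}_{k}\}$ is an $\oi$-basis of $t^N V_{k}$. Because $\oi$ is a DVR with uniformizer $t$, each $d_{i} = v_{i} t^{\beta_{i}}$ with $v_{i} \in \oi^{\times}$ and $\beta_{i} \in {\mathbb Z}_{\geq 0}$, and the divisibility chain becomes $\beta_{1} \leq \beta_{2} \leq \cdots \leq \beta_{k}$. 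Absorbing each unit $v_{i}$ into $\vec{w}_{i}$ (which preserves the property that the $\vec{w}_{i}$ form a basis of $\oi^n$), I may assume $d_{i} = t^{\beta_{i}}$.

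To obtain the statement for $V_{k}$ itself, multiply through by $t^{-N}$: the vectors $t^{\beta_{i}-N}\vec{w}_{i}$ form an $\oi$-basis of $V_{k}$. Finally I would reindex to get the non-increasing convention of the theorem: set $\vec{u}_{i} = \vec{w}_{k+1-i}$ and $\alpha_{i} = \beta_{k+1-i}-N$ for $1 \leq i \leq k$, and let $\vec{u}_{k+1}, \ldots , \vec{u}_{n}$ be the remaining unused basis vectors of $\oi^n$ in any order. This produces both the ordering $\alpha_{1} \geq \cdots \geq \alpha_{k}$ and the required basis ${\cal B}(V_{k})$.

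For uniqueness (part 3), I would appeal to the uniqueness clause of the elementary divisor theorem: the exponents $\beta_{i}$ are the unique invariants of the inclusion $t^N V_{k} \hookrightarrow \oi^n$, hence so are the $\alpha_{i} = \beta_{k+1-i}-N$. A self-contained verification is available via a minors-type characterization: $\alpha_{1}+\cdots+\alpha_{j}$ equals the minimum of $\| \vec{v}_{1} \wedge \cdots \wedge \vec{v}_{j}\|$ over all $j$-tuples in $V_{k}$ (interpreting the norm on $\bigwedge^{j} \K^n$ coordinatewise in any basis), which makes the $\alpha_{i}$ manifestly invariants of $V_{k}$. There is no real obstacle here since the argument reduces entirely to standard PID structure theory; the only mildly subtle point is ensuring the reduction by $t^N$ and the subsequent reindexing correctly preserves the requirement that the $\vec{u}_{i}$ form a basis of $\oi^n$ (not merely of $\K^n$), which the construction above handles because absorbing units and permuting basis vectors are automorphisms of $\oi^n$.
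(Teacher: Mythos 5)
The paper does not give a proof here; it simply labels the result as standard and moves on, relying on the reader's familiarity with the structure theory of modules over a PID. Your proposal supplies exactly the standard argument the paper is alluding to: clear denominators by a power of $t$ to reduce to a submodule of $\oi^n$, invoke the elementary-divisor (Smith normal form) theorem for the inclusion $t^N V_k \hookrightarrow \oi^n$, write each $d_i$ as $v_i t^{\beta_i}$ using the DVR structure, absorb the units into the basis, scale back by $t^{-N}$, and reindex to obtain the non-increasing convention. The reindexing bookkeeping ($\alpha_i = \beta_{k+1-i} - N$ with $\vec u_i = \vec w_{k+1-i}$, and the remaining $\vec w_j$ filling out the basis of $\oi^n$) is handled correctly, and the uniqueness claim is properly discharged either by the uniqueness clause of the elementary divisor theorem or by your exterior-power/minors characterization of the partial sums $\alpha_1 + \cdots + \alpha_j$. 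This is a complete and correct proof of a statement the paper leaves unproved.
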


\begin{df} Given some rank $k$ submodule $V_{k}$, we will call ${\cal B}(V_{k})=\{ t^{\alpha_{1}} \vec{u}_{1}, \ldots , t^{\alpha_{k}} \vec{u}_{k} \}$ an \emph{invariant adapted basis} of $V_{k}$ if it satisfies the criteria given above by Theorem~\ref{adapted basis theorem}. We will call the partition $\alpha = (\alpha_{1} \geq \alpha_{2} \geq \cdots \geq \alpha_{k})$ the \emph{invariant partition} of $V_{k}$, with the entries $\alpha_{i}$ the \emph{invariant factors} of $V_{k}$, denoted
\[ inv(V_{k}) = \alpha = (\alpha_{1} \geq \alpha_{2} \geq \cdots \geq \alpha_{k}). \]
We then will write
\[ \| V_{k} \| = \alpha_{1} + \cdots + \alpha_{k}. \]

Similarly, given some $n \times k$ matrix $S$ over $K$, we shall let $\| S \|$ denote the sum of the orders of the invariant factors of the matrix $S$.

Also, given some rank-$k$ submodule $V_{k}  \subseteq \K^n$ and an invariant adapted basis ${\cal B}(V_{k}) = \{ t^{\alpha_{1}} \vec{u}_{1}, \ldots , t^{\alpha_{k}} \vec{u}_{k} \}$ for $V_{k}$, we will let, for any $1 \leq i \leq j\leq k$,
\[ ({\cal B}(V_{k}) )|^{i}_{j} = \langle \langle t^{\alpha_{i} } \vec{u}_{i}, \ldots , t^{\alpha_{j}} \vec{u}_{j} \rangle \rangle , \]
where $\langle \langle t^{\alpha_{i} } \vec{u}_{i}, \ldots , t^{\alpha_{j}} \vec{u}_{j} \rangle \rangle$ denotes the $\oi$-module generated by the elements $t^{\alpha_{i} } \vec{u}_{i}, \ldots , t^{\alpha_{j}} \vec{u}_{j}$.

\end{df}

\begin{df}
Given a pair of full-rank lattices $\la, \n \in \Gr$, as stated in the introduction, we let $Gl_{n}( \K)$ act by right-multiplication on pairs $(\n,\Lambda) \in \Gr \times \Gr$. With this action we may stratify orbits in $\Gr \times \Gr$ by setting $inv(\n,\Lambda) = \mu$ whenever $(\n,\Lambda)$ and $(I_{n} ,\m)$ are in the same $GL_{n}(\K)$ orbit, and $inv(\m) = \mu$.\label{pair invariant} \end{df}

\begin{df} Let $U_{k}$ be a rank-$k$ $\oi$-submodule of $\K^n$. Then we shall let
$\overline{U}_{k}$
denote an $n \times k$ matrix over $K$ whose columns form a basis of the $\oi$-submodule $U_{k}$. The matrix $\overline{U_{k}}$ is, therefore, not uniquely determined by $U_{k}$, but the $\oi$-span of the columns of $\overline{U_{k}}$ will be.
Given two $\oi$-modules
 $U_{k} \in \Gr^{k}$, $V_{s}\in \Gr^{s}$, we will let $[ \overline{U_{k}}|\overline{V_{s}}]$ denote the
 $n \times (k+s)$
 matrix whose columns are those of $\overline{U_{k}}$ followed by those of $\overline{V_{s}}$. When computing the orders of matrices or matrix blocks, we will generally omit the enclosing brackets, and denote the order as
 \[ \|\overline{U_{k}}|\overline{V_{s}} \|  \ \ \hbox{in place of} \ \ \| [ \overline{U_{k}}|\overline{V_{s}} ] \| \]
 though the latter is more notationally consistent.
\end{df}

With our notation, given some rank-$k$ subsmodule $U_{k} \subseteq \K^n$, then $\| U_{k} \| = \| \overline{U_{k}} \|$. That is, the sum of the orders of the invariant factors of the submodule $U_{k}$ equals that computed from a matrix of basis elements of $U_{k}$. We only necessarily have $\| U_{k} + V_{s}\| = \| \overline{U_{k}}|\overline{V_{s}}\| $ if the sum is direct, but this may fail, otherwise.

 \begin{df}
 Note that, if $\la_{t} \subset \la$ is a rank-$t$ submodule of $\la$, then there is a rank-$t$ submodule $U_{t} \subseteq \oi^n$, and some choice of bases in which the matrix equation
 \[ \overline{\la_{t}} = \overline{\la(U_{t}) } =  \overline{\la}\cdot \overline{U_{t}} \]
holds. We will generally assume our choice of matrices makes the above relations true, and refer to this as a \emph{matrix realization} of $\la_{t}$.
\end{df}

As stated in the introduction, we shall use certain invariants of submodules over $\oi$ to determine a combinatorially defined object called a hive. Hives first appeared in the work of Knutson and Tao~\cite{knut}, and their properties have been studied in many related problems~\cite{pak}.

\begin{df}A {\em hive} of size $n$ is a triangular array of
numbers $(h_{ij})_{0 \leq i \leq j \leq n}$
that satisfy the {\em rhombus inequalities}:
\begin{enumerate}
\item {\em Right-Leaning:} $h_{ij}+h_{i-1,j-1} \geq h_{i-1, j} + h_{i,j-1}$, for $1
\leq i < j \leq n$.
\item {\em Left-Leaning:} $h_{i,j} + h_{i,j-1} \geq h_{i-1,j-1} + h_{i+1,j}$, for $1
\leq i < j \leq n$.
\item {\em Vertical:} $h_{ij} + h_{i+1,j} \geq h_{i+1,j+1} + h_{i,j-1}$, for $1
\leq i < j \leq n$.
\end{enumerate} \label{hive definition}
We define the \emph{type} of a hive $\{ h_{st} \}$ as a triple of partitions $(\mu, \nu, \lambda)$ of length $n$, where $\mu=(\mu_{1}, \ldots , \mu_{n})$ gives the differences down the left edge of the hive, $\nu$ gives the differences along the bottom, and $\lambda$ gives the differences along the bottom, where we set $h_{00} = 0$. Specifically,

\begin{align*}
\mu_i &= h_{0,i} - h_{0,(i-1)} \quad & \hbox{(the downward
differences of entries
along the left side)} \\
\nu_i &= h_{i,n} - h_{(i-1),n} \quad & \hbox{(the rightward
differences of entries along the bottom)}\\ \lambda_{i} &=
 h_{ii} -h_{(i-1)(i-1)}  \quad & \hbox{(the downward differences of
entries along the right side)}.
\end{align*}
It is a consequence of the rhombus inequalities that these numbers form non-increasing partitions $\mu,\nu$ and $\lambda$, where $\mu = (\mu_{1} \geq \mu_{2} \geq \cdots \geq \mu_{n})$, $\nu = (\nu_{1} \geq \nu_{2} \geq \cdots \geq \nu_{n})$, and $\lambda = (\lambda_{1} \geq \lambda_{2} \geq \cdots \geq \lambda_{n})$.
\end{df}

A hive of size 4 is shown below:
\[ \setlength{\arraycolsep}{-0.2mm} \renewcommand{\arraystretch}{1.7}
\begin{array}{ccccccccccc}
    &     &    &    &     &h_{00}&     &     &      &   &  \\
    &     &    &    &h_{01}&     &h_{11}&     &      &   &  \\
    &     &    &h_{02}&     &h_{12}&     &h_{22}&      &   &  \\
    &     &h_{03}&    &h_{13}&     &h_{23}&     &h_{33}&   &  \\
    &h_{04}&    &h_{14}&     &h_{24}&     &h_{34}&      &h_{44}&  \\
 \end{array}  \hskip .25in=\hskip .25in
 \setlength{\arraycolsep}{-0.2mm} \renewcommand{\arraystretch}{1.7}
\begin{array}{ccccccccccc}
    &     &    &    &     &\,0\,&     &     &      &   &  \\
    &     &    &    &\,21\,&     &\,27\,&     &      &   &  \\
    &     &    &\,34\,&     &\,44\,&     &\,48\,&      &   &  \\
    &     &\,40\,&    &\,54\,&     &\,64\,&     &\,67\,&   &  \\
    &41&    &58&     &72&     &81&      &83&  \\
 \end{array} \]
 Its type is $(\mu,\nu,\lambda)$ where $\mu = (21,13,6,1)$, $\nu = (17,14,9,2)$, and $\lambda = (27,21,19,16)$.

The inequalities above are so named because their entries correspond to rhombi formed by adjacent
entries in the array such that the upper acute angle points to the right,
vertically, and to the left, respectively.  In each case the inequality asserts
that the sum of the entries of the obtuse vertices of the rhombus is greater
than or equal to the sum of the acute entries.

 \section{Technical Lemmas}

 We begin with some technical results that will allow us to write matrix decompositions in a somewhat simpler form.
 \begin{lem}Let $\la^{(1)}, \la^{(2)}, \ldots , \la^{(s)}$ be full-rank lattices in $\K^n$, and $\la^{(i)}_{a_{i}} \subseteq \la^{(i)}$ be a submodule of rank $a_{i}$. We assume the sum
  \[ \la^{(1)}_{a_{1}}  \oplus \la^{(2)}_{a_{2}} \oplus \cdots \oplus \la^{(s)}_{a_{s}}  \]
  is direct.

  We choose submodules of $\oi^n \subseteq \K^n$, denoted $U_{a_{1}}, U_{a_{2}}, \ldots , U_{a_{s}}$, of ranks $a_{1}, a_{2} \ldots, a_{s}$ yielding some matrix realization
  \[ \overline{\la^{(i)}_{a_{i}} } = \overline{ \la^{(i)}(U_{a_{i}})}. \]
 Then there also exists a matrix realization of these submodules that have a block upper triangular decomposition:
 \[ \left[
 \overline{ \la_{1}(U_{a_{1}})} |\overline{ \la_{2}(U_{a_{2}})} | \cdots |\overline{ \la_{s}(U_{a_{s}})} \right] =
 \left[ \begin{array}{c|c|c|c}
\left(  \overline{ \la_{1}(U_{a_{1}})}\right)^{(1)} & \left(\overline{ \la_{2}(U_{a_{2}})}\right)^{(1)} &  \cdots &\left(\overline{ \la_{s}(U_{a_{s}})} \right)^{(1)} \\
0 & \left(\overline{ \la_{2}(U_{a_{2}})}\right)^{(2)} &  \cdots &\left(\overline{ \la_{s}(U_{a_{s}})} \right)^{(2)} \\
\vdots & \ddots & \ddots & \vdots \\
0 & \cdots & 0 &\left(\overline{ \la_{s}(U_{a_{s}})} \right)^{(s)} \end{array} \right] \]
where each block on the diagonal
\[ \left(\overline{ \la_{k}(U_{a_{k}})} \right)^{(k)} \]
is of size $a_{k} \times a_{k}$, is itself a diagonal matrix $diag(t^{\beta^{k}_{1}}, t^{\beta^{k}_{2}},  \ldots , t^{\beta^{k}_{a_{k}}}))$ such that $\beta^{k}_{1} \geq \beta^{k}_{2} \geq \cdots \beta^{k}_{a_{k}}$, and, in particular, we have:
\[ \|  \overline{ \la_{1}(U_{a_{1}})} |\overline{ \la_{2}(U_{a_{2}})} | \cdots |\overline{ \la_{s}(U_{a_{s}})} \| = \left\|  \left(\overline{ \la_{1}(U_{a_{1}})}\right)^{(1)} \right\| +\left\|  \left(\overline{ \la_{1}(U_{a_{1}})}\right)^{(2)} \right\| + \cdots + \left\| \left( \overline{ \la_{1}(U_{a_{1}})}\right)^{(s)} \right\|. \] \label{normal}
 \end{lem}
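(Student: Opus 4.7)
The plan is to argue by induction on $s$, at each stage applying the adapted basis theorem (Theorem~\ref{adapted basis theorem}) inside a suitable quotient space to produce the next block column. For the base case $s = 1$, I apply Theorem~\ref{adapted basis theorem} directly to $\la^{(1)}_{a_1}$, obtaining a basis $\{\vec{u}_1, \ldots, \vec{u}_n\}$ of $\oi^n$ and integers $\beta^1_1 \ge \cdots \ge \beta^1_{a_1}$ such that $\{t^{\beta^1_j}\vec{u}_j\}_{j=1}^{a_1}$ is an $\oi$-basis of $\la^{(1)}_{a_1}$. Written in the coordinates $\{\vec{u}_1,\ldots,\vec{u}_n\}$, this matrix realization is $\mathrm{diag}(t^{\beta^1_1},\ldots,t^{\beta^1_{a_1}})$ stacked above a zero block, exactly the shape claimed.

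For the inductive step, suppose that after handling $\la^{(1)}_{a_1},\ldots,\la^{(k)}_{a_k}$ I have a basis $\{\vec{u}_1,\ldots,\vec{u}_n\}$ of $\oi^n$ in which the first $A_k := a_1 + \cdots + a_k$ columns already exhibit the required block-upper-triangular form and are supported entirely in rows $1,\ldots,A_k$. Let $\pi: \K^n \to \K^n/\mathrm{span}_K(\vec{u}_1,\ldots,\vec{u}_{A_k})$ be the natural projection. Directness of $\bigoplus_i \la^{(i)}_{a_i}$ forces the subspaces $\mathrm{span}_K(\la^{(i)}_{a_i})$ to be $K$-linearly independent, so $\pi$ is injective on $\la^{(k+1)}_{a_{k+1}}$ and the image $\pi(\la^{(k+1)}_{a_{k+1}})$ has rank $a_{k+1}$ in the quotient, which I identify with $\K^{n-A_k}$ via the basis $\vec{u}_{A_k+1},\ldots,\vec{u}_n$. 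Applying Theorem~\ref{adapted basis theorem} to this image yields a new basis $\{\vec{u}'_{A_k+1},\ldots,\vec{u}'_n\}$ of $\mathrm{span}_\oi(\vec{u}_{A_k+1},\ldots,\vec{u}_n)$ and exponents $\beta^{k+1}_1 \ge \cdots \ge \beta^{k+1}_{a_{k+1}}$. Lifting each $t^{\beta^{k+1}_j}\vec{u}'_{A_k+j}$ back to an element $\vec{w}_j \in \la^{(k+1)}_{a_{k+1}}$ produces an $\oi$-basis $\vec{w}_j = t^{\beta^{k+1}_j}\vec{u}'_{A_k+j} + \eta_j$ of $\la^{(k+1)}_{a_{k+1}}$, with $\eta_j$ lying in the kernel $\mathrm{span}_K(\vec{u}_1,\ldots,\vec{u}_{A_k})$ of $\pi$. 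Replacing $\vec{u}_{A_k+1},\ldots,\vec{u}_n$ by $\vec{u}'_{A_k+1},\ldots,\vec{u}'_n$ in the basis of $\oi^n$ leaves the earlier block columns (supported in rows $1,\ldots,A_k$) completely unchanged and yields a new $(k+1)$-st block column whose diagonal sub-block is $\mathrm{diag}(t^{\beta^{k+1}_1},\ldots,t^{\beta^{k+1}_{a_{k+1}}})$, with possibly nonzero entries above and zeros below.

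After the induction, the concatenated matrix $A$ has zero rows below row $A_s$, so its only potentially nonvanishing $A_s \times A_s$ minor is the determinant of the top square block, which is itself block upper triangular with diagonal blocks $\mathrm{diag}(t^{\beta^k_1},\ldots,t^{\beta^k_{a_k}})$. Since the greatest common divisor of the $r$-th order minors of a rank-$r$ matrix equals $t^{\|A\|}$ up to a unit (the standard correspondence between elementary divisors and determinantal divisors), this immediately yields $\|A\| = \sum_{k,j} \beta^k_j = \sum_k \|(\overline{\la^{(k)}(U_{a_k})})^{(k)}\|$. The main subtlety I expect is keeping the bookkeeping of basis replacements clean throughout the induction, verifying that each substitution at step $k+1$ preserves the previously constructed structure; this works precisely because the earlier block columns are supported on exactly the basis vectors that remain unchanged.
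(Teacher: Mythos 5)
Your proof is correct and takes essentially the same approach as the paper: the paper phrases the iterative construction as ``row operations in rows $a_1+\cdots+a_k+1$ through $n$ and column operations within block $k+1$,'' which is precisely what your passage to the quotient $\K^n/\mathrm{span}_K(\vec{u}_1,\ldots,\vec{u}_{A_k})$ and subsequent re-lifting implements in module-theoretic language. Your version spells out the intermediate bookkeeping (injectivity of $\pi$ on $\la^{(k+1)}_{a_{k+1}}$, the lift $\vec w_j = t^{\beta^{k+1}_j}\vec u'_{A_k+j}+\eta_j$, and the determinantal-divisor argument for the norm formula) that the paper's one-sentence proof leaves implicit.
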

\begin{df} A matrix realization of submodules in the form above, satisfying the conclusions of Lemma~\ref{normal}, will be said to be in \emph{normal form}. \end{df}

\begin{proof}
By row operations and column operations in the first $a_{1}$-many columns only, we may assume the matrix $ \overline{ \la_{1}(U_{a_{1}})}$ is in normal form. We then continue, using only row operations in rows $a_{1} + 1$ to $n$, and in columns $a_{1} +1$ to $a_{1} + a_{2}$, etc., proceeding down the block diagonal at each stage.
\end{proof}

 Speyer's work on hives and Vinnikov curves~\cite{speyer} explored, among other things, the relation between hive constructions and some aspects of tropical mathematics (where multiplication and addition of two numbers is replaced by the addition and the minimum of two numbers, respectively). In our setting, we are able to realize this relationship rather explicitly in the form of Lemma~\ref{amalgam} below, which gives a precise characterization of the submodules at which the minima or maxima appearing in Theorem~\ref{main theorem} above may be realized.

 Lemma~\ref{amalgam} will be proved for two \emph{arbitrary} full rank $\oi$-lattices ${\cal A}$ and ${\cal B}$ in $K^n$. In this way we will be able to apply it to \emph{both} Equation~\eqref{mu,nu,lambda min}, producing a hive of type $(\mu,\nu, \lambda)$ and also Equation~\eqref{nu,mu,lambda min}, producing a hive of type $(\nu, \mu, \lambda)$. The characterization given below will be used in the following section to show that our construction satisfies the rhombus inequalities (Theorem~\ref{min formula}).

 \begin{lem} Let ${\cal A}, {\cal C} \in \Gr$ be two full $\oi$-lattices in $K^n$. For all $s,t \in {\mathbb N}$ where $s+t \leq n$, let us define
 \[ M_{st} = \min \{ \| {\cal A}_{s} \oplus {\cal C}_{t} \| : {\cal A}_{s} \subseteq {\cal A}, \ \ {\cal C}_{t} \subseteq {\cal C} \}. \]
 Then,
 given any two values $M_{st}$ and $M_{s't'}$ defined as above, if $t \leq t'$, we may assume that the minima may be realized at submodules $A_{s}, C_{t}$ and $A_{s'}, C_{t'}$, respectively, such that $C_{t} = C_{t'}\!|^{t'-t+1}_{t'}$. In particular,  we may require $C_{t} \subseteq C_{t'}$, and if $t = t'$, that $C_{t} = C_{t'}$.
\label{amalgam}
 \end{lem}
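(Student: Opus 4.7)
Since the lemma asserts only the \emph{existence} of a compatible pair of minimizers (not compatibility of every pair), I would use a joint-selection / exchange argument. Among all pairs $((A_s, C_t), (A_{s'}, C_{t'}))$ of minimizers of $M_{s,t}$ and $M_{s',t'}$, select one that maximizes the overlap $r := \text{rank}(C_t \cap C_{t'})$. The plan is to show that $r = t$, which forces $C_t \subseteq C_{t'}$, and then to refine the choice so that $C_t$ sits as the bottom $t$ basis elements in an invariant adapted basis of $C_{t'}$.

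Supposing for contradiction that $r < t$, there exist vectors $w \in C_t \setminus C_{t'}$ and $v \in C_{t'} \setminus C_t$. By placing both $[\overline{A_s}\,|\,\overline{C_t}]$ and $[\overline{A_{s'}}\,|\,\overline{C_{t'}}]$ in normal form via Lemma~\ref{normal} in a common basis of $\oi^n$, one identifies coordinates in which a swap between $w$ and $v$ (combined with an adjustment of $A_s$, or symmetrically an adjustment of $C_{t'}$ pulling in a replacement from $\mathcal{C}$) produces a new pair of minimizers whose overlap is strictly larger, contradicting the choice of pair. Once $C_t \subseteq C_{t'}$ is secured, a further exchange confined to $C_{t'}$ (again via Lemma~\ref{normal}) lets one select an invariant adapted basis of $C_{t'}$ whose bottom $t$ basis elements span $C_t$; since the $\beta_i$ are already in decreasing order, this yields $C_t = C_{t'}|^{t'-t+1}_{t'}$ as required. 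The last two conclusions (containment, and equality when $t = t'$) then follow as special cases.

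The principal obstacle is verifying that the swap step simultaneously preserves both minima. The norms $\|A_s \oplus C_t\|$ and $\|A_{s'} \oplus C_{t'}\|$ both decompose as sums of invariant factors of the diagonal blocks in Lemma~\ref{normal}'s normal form, and the exchange has to be engineered so that any change to these blocks amounts to a controlled redistribution leaving the totals invariant. This calls for a careful case analysis depending on how $w$ and $v$ project onto the $\mathcal{A}$-block versus the $\mathcal{C}$-block in the normal form, together with the minimality of $(A_{s'}, C_{t'})$ as a bound on how vectors in $\mathcal{A}$ can interact with $C_{t'}$; together these prevent the exchange from inflating either norm.
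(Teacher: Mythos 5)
Your high-level strategy (choose the pair of minimizers maximizing $\mathrm{rank}(C_t\cap C_{t'})$ and derive a contradiction from $r<t$) is a plausible alternative to the paper's constructive approach, but the proposal leaves the crux unresolved, and the mechanism you sketch for the exchange is not the right one.

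The paper does not perform a \emph{swap} at the obstructing index. It runs a trichotomy: (i) if replacing $c_t$ by $c_{t'}'$ inside $C_t$ preserves $\|A_s\oplus C_t\|$, do it; (ii) if not, but replacing $c_{t'}'$ by $c_t$ inside $C_{t'}$ preserves $\|A_{s'}\oplus C_{t'}\|$, do that instead; (iii) if neither replacement is harmless, then by minimality \emph{both} replacements strictly increase the respective norms, and the correct move is to replace both generators by the single vector $c_t+c_{t'}'$. In case (iii) the sum works precisely where a swap would fail: multilinearity of the determinant gives
\[
\det\bigl[\,\overline{A_{s'}}\,\big|\,c_1',\dots,c_{t'-1}',\,c_{t'}'+c_t\,\bigr]
=\det\bigl[\,\overline{A_{s'}}\,\big|\,c_1',\dots,c_{t'-1}',\,c_{t'}'\,\bigr]
+\det\bigl[\,\overline{A_{s'}}\,\big|\,c_1',\dots,c_{t'-1}',\,c_t\,\bigr],
\]
and the second term has strictly higher order by the case hypothesis, so the order of the sum equals that of the first term; symmetrically for $C_t$. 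Your proposal, which commits to a swap ``combined with an adjustment,'' acknowledges this is the ``principal obstacle'' but never specifies the adjustment, and a swap alone really can fail in case (iii): there is no reason $c_{t'}'$ should be an acceptable replacement for $c_t$ inside $C_t$ nor vice versa. So the exchange step, as written, is a gap rather than a proof.

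Two further points worth flagging. First, over a discrete valuation ring, $\mathrm{rank}(C_t\cap C_{t'})=t$ does \emph{not} by itself give $C_t\subseteq C_{t'}$ (e.g.\ $C_t=\langle\langle e_1\rangle\rangle$, $C_{t'}=\langle\langle t e_1, e_2\rangle\rangle$ have full-rank intersection but no containment), so the maximal-overlap framework needs an extra argument before concluding containment; the paper sidesteps this by literally building a shared generating set. Second, the passage from $C_t\subseteq C_{t'}$ to $C_t=C_{t'}|^{t'-t+1}_{t'}$ is not a quick appeal to Lemma~\ref{normal}: the paper spends the second half of the proof carefully manipulating a joint block form (exploiting that the diagonal entries of $\overline{C_{t'}}$ in normal form give a Smith-type basis, and that minimality again forces the bottom entries of $C_t$ to align with the smallest invariant factors of $C_{t'}$). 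Your one-sentence ``further exchange confined to $C_{t'}$'' compresses away the real work there as well.
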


 \begin{proof}
 Suppose $M_{st}$ and $M_{s't'}$ are realized at submodules $A_{s}, C_{t}$ and $A_{s'}, C_{t'}$, respectively, and suppose $t \leq t'$. Let
 \[ C_{t}= \langle \langle c_{1} , \ldots , c_{t} \rangle \rangle, \ \ \ C_{t'}= \langle \langle c_{1}' , \ldots , c_{t'}' \rangle \rangle \]
 denote the span of bases for $C_{t}$ and $C_{t'}$, respectively. We will now define generators for submodules $C_{t}^*$ and $C_{t'}^*$ such that
 \[ M_{st} =\| {\cal A}_{s} \oplus {\cal C}_{t} \| = \| {\cal A}_{s} \oplus {\cal C}_{t}^* \| \ \ \hbox{and} \ \ M_{s't'} =\| {\cal A}_{s'} \oplus {\cal C}_{t'} \| = \| {\cal A}_{s'} \oplus {\cal C}_{t'}^* \|,  \]

 where
 \[ {\cal C}_{t'}^* = \langle \langle c_{1}^*, \ldots , c_{t'}^* \rangle \rangle \]
 and
 \[ {\cal C}_{t}^* = \langle \langle c_{1}, \ldots, c_{t'-t},c_{1}^*, \ldots ,  , c_{t}^* \rangle \rangle. \]

 We start at the index $t'$, and proceed with smaller values of the index. There are three cases:

 \begin{enumerate}
 \item If replacing $c_{t}$ with $c'_{t'}$ in $C_{t}$ does not change the norms of the associated submodules, that is, if
 \[ \| {\cal A}_{s} \oplus {\cal C}_{t} \| = \| {\cal A}_{s} \oplus \langle \langle c_{1}, \ldots, c_{t-1},c_{t'} \rangle \rangle \|, \]
 then set $c_{t}^* = c_{t'}^* = c_{t'}'. $
 \item If $\| {\cal A}_{s} \oplus {\cal C}_{t} \| \neq \| {\cal A}_{s} \oplus \langle \langle c_{1}, \ldots, c_{t-1},c_{t'} \rangle \rangle \|$, but we may replace $c'_{t'}$ with $c_{t}$, so that
 \[ \| {\cal A}_{s'} \oplus {\cal C}_{t'} \| = \| {\cal A}_{s'} \oplus \langle \langle c_{1}', \ldots, c_{t'-1}',c_{t} \rangle \rangle \|, \]
 then set $c_{t}^* = c_{t'}^* = c_{t}. $
 \item If the previous two steps do not apply, then we may assume
 \[ \| {\cal A}_{s} \oplus {\cal C}_{t} \| < \| {\cal A}_{s} \oplus \langle \langle c_{1}, \ldots, c_{t-1},c_{t'}' \rangle \rangle \| \]
 and
 \[ \| {\cal A}_{s'} \oplus {\cal C}_{t'} \| < \| {\cal A}_{s'} \oplus \langle \langle c_{1}', \ldots, c_{t'-1}',c_{t} \rangle \rangle \| \]
 by the definition of $M_{st},M_{s', t'}$ as the \emph{minimum} among all submodules of the appropriate ranks. In this case, we set:
 \[ c_{t}^* = c_{t'}^* =c_{t'}'+ c_{t} \]
 so that, by properties of determinant (applied to any matrix realization of these generators as columns of matrices) we have:
 \begin{align*} \| {\cal A}_{s'} \oplus \langle \langle c_{1}', \ldots c_{t'-1}', c_{t'}'+ c_{t} \rangle \rangle \| =& \| {\cal A}_{s'} \oplus \langle \langle c_{1}', \ldots c_{t'-1}', c_{t'}' \rangle \rangle \| \\
 = & \| {\cal A}_{s'} \oplus {\cal C}_{t'} \| \end{align*}
 and similarly
 \[ \| {\cal A}_{s} \oplus \langle \langle c_{1}, \ldots c_{t-1}, c_{t'}'+ c_{t} \rangle \rangle \|
 =  \| {\cal A}_{s} \oplus {\cal C}_{t} \| . \]
 \end{enumerate}

 Repeating this construction with the generators $c'_{t'-1}, c'_{t'-2}$, etc., allows us to conclude, ultimately, that $C_{t} \subset C_{t'}$. In particular, in the case $t = t'$, we see that the generators for ${\cal C}_{t}$ and ${\cal C}_{t'}$ are the same.

 To prove that the minima are achieved with $C_{t}$ and $C_{t'}$ such that $C_{t} = C_{t'}\!|^{t'-t+1}_{t'}$ we now proceed matricially, and choose matrix realizations $\overline{{\cal A}_{s}}, \overline{{\cal C}_{t}}, \overline{{\cal A}_{s'}}$ and $\overline{{\cal C}_{t'}}$ for the modules ${\cal A}_{s}, {\cal C}_{t}, {\cal A}_{s'}$ and ${\cal C}_{t'}$, respectively, where we may now assume ${\cal C}_{t} \subset {\cal C}_{t'}$, implying that the columns of the $n \times t$ matrix $\overline{{\cal C}_{t}}$ are in the span of the columns of $\overline{{\cal C}_{t'}}$.

 Then, by standard arguments we may find an invertible $P \in GL_{n}(\oi)$ and $Q_{t'} \in GL_{t'}(\oi)$ such that we have the normal form:
 \[ P [ \overline{{\cal C}_{t'}} \, | \, \overline{{\cal A}_{s'}}] \begin{bmatrix} Q & 0 \\ 0 & I_{s'} \end{bmatrix} =  \left[ \begin{array}{cccc|ccc}
 t^{\beta_1} & 0 & \cdots &0 & && \\
 0 & t^{\beta_2} & \ddots & \vdots & & \left(P\overline{{\cal A}_{s'}}\right)^{(1)}&\\ \vdots & \ddots & \ddots & 0 & && \\
 0 & \cdots & 0 & t^{\beta_{t'}} & && \\ \hline
 & & \!\!\!\!\!\!\!\!\!\!\!\!{\Huge 0} & &  & \left(P\overline{{\cal A}_{s'}}\right)^{(2)} & \\
 \end{array} \right] \]
 where the lower left ``$0$'' denotes an $(n-t') \times t'$ block of zeroes, and the sequence $(\beta_{1} \geq \beta_{2} \geq \cdots \geq \beta_{t'}) = inv({\cal C}_{t'})$.  We will denote the columns of $C_{t'}$ above by $c'_{1}, \ldots , c'_{t'}$. Since the columns of $\overline{{\cal C}_{t}}$ are in the span of the columns of $\overline{{\cal C}_{t'}}$, applying the same row transformations to $\overline{{\cal C}_{t}}$ will yield:

 \[ P [ \overline{{\cal C}_{t}} \, | \, \overline{{\cal A}_{s}}]  =
 \left[ \begin{array}{c|c} C_t & \left(P\overline{{\cal A}_{s}}\right)^{(1)}\\ \hline 0 &   \left(P\overline{{\cal A}_{s}}\right)^{(2)}\end{array} \right]  \]
 where $C_{t}$ denotes a $t' \times t$ matrix in the span the diagonal matrix above. Applying further column transformations to $C_{t}$ will allow us to conclude that the columns of $C_{t}$ form an invariant adapted basis for the transformed image of ${\cal C}_{t}$. That is, we may assume
 \[ C_{t} = [ t^{\alpha_{1} } \vec{u}_{1}, \ldots , t^{\alpha_{t}} \vec{u}_{t} ] \]
 where $inv(C_{t}) = inv(\overline{{\cal C}_{t}}) = (\alpha_{1} \geq \cdots \geq \alpha_{t})$, and $[ \vec{u}_{1}, \ldots , \vec{u}_{t} ]$ is a $t' \times t$ matrix such that $\| \vec{u}_{i} \| = 0$ for $1 \leq i \leq t$.

We will now start in column $t$, and systematically work on the columns of $C_{t}$ to ensure:

\begin{enumerate}
\item The columns $c'_{1}, \ldots, c'_{t'}$ and $u_{1}t^{\alpha_{1}},\ldots , u_{t}t^{\alpha_{t}}$ form invariant-adapted bases of $C_{t'}$ and $C_{t}$, respectively.
\item $c_{t'-k} = u_{t-k}t^{\alpha_{t-k}}$, for $k = 0, \ldots, t-1$.
\end{enumerate}

In column $t$, if we have $\alpha_{t} = \beta_{t'}$, then either we are done in this column (meaning $c_{t'} = u_{t}t^{\alpha_{t}}$), or, there is an entry of order $\alpha_{t}$ in the column $u_{t}t^{\alpha_{t}}$ in a row $s$ higher than row $t'$. This implies that the invariant factors of $C_{t'}$ satisfy $\beta_{s} = \beta_{s+1} = \cdots = \beta_{t'}$. Consequently, we may swap some rows (on all matrices) and columns of $C_{t'}$ so that $C_{t'}$ is still in diagonal form, and the entry in row $t'$ of $C_{t}$ has order $\beta_{t'}$, which is necessarily minimal among the orders of row $t'$ of $C_{t}$. Then, in either case we may, by column operations on $C_{t}$, ensure that all entries in row $t'$ in columns 1 through $t-1$ of $C_{t}$ are now $0$.

If $\alpha_{t} \neq \beta_{t'}$, we must have $\beta_{t'} < \alpha_{t}$, since ${\cal C}_{t} \subset {\cal C}_{t'}$, and $\beta_{t'}$ is minimal among all orders of elements in the submodule. In fact, we must have $\beta_{t'}$ is less than the orders of all entries in column $t$ of $C_{t}$.  We will argue in a manner similar to the above. If
\[ \| t^{\alpha_{1} } \vec{u}_{1}, \ldots , t^{\alpha_{t-1}} \vec{u}_{t-1}, c_{t'}| A_{s} \| = \| t^{\alpha_{1} } \vec{u}_{1}, \ldots , t^{\alpha_{t}} \vec{u}_{t} |A_{s} \|, \]
then we may replace $ t^{\alpha_{t}} \vec{u}_{t}$ in $C_{t}$ above with $c'_{t'}$. Otherwise, we have
\[ \| t^{\alpha_{1} } \vec{u}_{1}, \ldots , t^{\alpha_{t-1}} \vec{u}_{t-1}, c'_{t'}| A_{s} \| > \| t^{\alpha_{1} } \vec{u}_{1}, \ldots , t^{\alpha_{t}} \vec{u}_{t} |A_{s} \|, \]
since the right member of the inequality has minimal order among all submodules of appropriate ranks. In this case we may replace \emph{both} columns $c'_{t'}$ and $ t^{\alpha_{t}} \vec{u}_{t}$ with their sum $c'_{t'} +  t^{\alpha_{t}} \vec{u}_{t}$, noting that by the above inequality, we must have
\[ \| t^{\alpha_{1} } \vec{u}_{1}, \ldots ,(c'_{t'}+  t^{\alpha_{t-1}} \vec{u}_{t-1})| A_{s} \| = \| t^{\alpha_{1} } \vec{u}_{1}, \ldots , t^{\alpha_{t}} \vec{u}_{t} |A_{s} \|, \]
and also
\[ \| c'_{1}, \ldots , c'_{t'-1}, (c'_{t'}+  t^{\alpha_{t-1}} \vec{u}_{t-1}) | A_{s'} \| =  \| c'_{1}, \ldots , c'_{t'-1}, c'_{t'}  | A_{s'} \| \]
since the columns $c'_{1}, \ldots , c'_{t'-1}, (c'_{t'}+  t^{\alpha_{t-1}} \vec{u}_{t-1})$ equal the span of $c'_{1}, \ldots , c'_{t'-1}, c'_{t'} $. In either case, we may then assume that the entry in row $t'$ of column $t$ of $C_{t}$ is of (minimal) order $\beta_{t'}$, and hence we may (by column operations) ensure that all entries in $C_{t}$ in row $t'$, columns $1$ through $t-1$, are zero.
Further, in all cases, we may ensure that column $t$ of $C_{t}$ and column $t'$ of $C_{t'}$ equal, and are the columns of an invariant adapted basis corresponding to the smallest invariant factor.

 We then argue in precisely the same manner in column $t-1$, proceeding through all succeeding columns.
 \end{proof}

 We note also that the hypotheses of the above lemma are, up to ordering of notation, symmetric in the modules ${\cal A}$ and ${\cal C}$.

\comm{

 \begin{lem}Let ${\cal A}, {\cal C} \in \Gr$ be two full $\oi$-lattices in $K^n$. For all $s,t \in {\mathbb N}$ where $s+t \leq n$, let us define
 \[ M_{st} = \min \{ \| {\cal A}_{s} \oplus {\cal C}_{t} \| : {\cal A}_{s} \subseteq {\cal A}, \ \ {\cal C}_{t} \subseteq {\cal C} \}. \]
 Then there are fixed invariant adapted bases ${\cal B}({\cal A})=\{ a_{1}, \ldots , a_n \}$ for ${\cal A}$ and ${\cal B}({\cal C})=\{ c_{1}, \ldots, c_n \}$ for ${\cal B}$ at which we may realise all of these minima $M_{st}$ simultaneously with submodules that satisfy
 \[ {\cal A}_{s} = {\cal B}({\cal A})\!\downarrow^{n-s+1}_{n} \ \ \text{and} \ \ {\cal C}_{t} = {\cal B}({\cal C})\!\downarrow^{n-t+1}_{n}. \]
 \label{invariant adapted characterization}
 \end{lem}
 \begin{proof} The proof of this (somewhat technical) lemma will proceed in two parts. First, we shall show that each minimum $M_{st}$ may be realized by \emph{some} choice of invariant adapted bases for ${\cal A}$ and ${\cal C}$. In the second part, we shall show how all minima may be realized simultaneously by one pair of bases.

 \medskip
 Part I. Suppose the minimum for some $M_{st}$ is realized at submodules ${\cal A}_{s}\subseteq {\cal A}$ and ${\cal C}_{t}\subseteq {\cal C}$. (Recall our convention where the subscript denotes the \emph{rank} of the submodule.)
 Let $\overline{{\cal A}_s}$ and $\overline{{\cal C}_t}$ be matrices whose columns are the bases for these submodules.  Right-multiplying by any block-diagonal element of $GL_{(s+t)}(\oi)$ of the form
 \[ \begin{bmatrix} Q_{s} & 0 \\ 0 & Q_{t} \end{bmatrix} \]
 will fix the $\oi$-span of the columns of $ \left[ \overline{{\cal A}_s} | \overline{{\cal C}_t} \right]$. Multiplying $ \left[ \overline{{\cal A}_s} | \overline{{\cal C}_t} \right]$ on the left by any element of $GL_{n}(\oi)$ maps the modules generated by the columns of $\overline{{\cal A}_s}$ and $\overline{{\cal C}_t}$, respectively, to modules isomorphic to them (though not necessarily submodules of ${\cal A}$ and ${\cal C}$), but preserving invariant factors of the sum of those modules.

First, let us find some $P \in GL_{n}(\oi)$ such that
 \[ P \left[ \overline{{\cal A}_s} | \overline{{\cal C}_t} \right] = \left[ \begin{array}{c|c} A_{s} & \left(P\overline{{\cal C}_{t}}\right)^{(1)}\\ \hline 0 &   \left(P\overline{{\cal C}_{t}}\right)^{(2)}\end{array} \right] \]
 where $A_{s}$ is an $s \times s$ matrix such that
 \[ inv(A_{s})  = inv( {\cal A}_{s}) \]
 and the right side of the matrix is the corresponding block form of $P \overline{{\cal C}_t}$. Indeed, since the orders of invariants are an isomorphism invariant, and the statement of the Lemma is also preserved under such isomorphisms, we shall omit mention of the matrix $P$ and assume our matrices already have the block form:
 \[  \left[ \overline{{\cal A}_s} | \overline{{\cal C}_t} \right] =  \left[ \begin{array}{c|c} A_{s} & \left(\overline{{\cal C}_{t}}\right)^{(1)}\\ \hline 0 &   \left(\overline{{\cal C}_{t}}\right)^{(2)}\end{array} \right]. \]
 Let us now choose a basis $\{ \vec{v}_{1}, \ldots , \vec{v}_{t} \}$ for ${\cal C}_t$ such that we may write the above as:

 \begin{align*}  \left[ \overline{{\cal A}_s} | \overline{{\cal C}_t} \right] &=  \left[ \begin{array}{c|c} A_{s} & \left(\overline{{\cal C}_{t}}\right)^{(1)}\\ \hline 0 &   \left(\overline{{\cal C}_{t}}\right)^{(2)}\end{array} \right] \\
 & = \left[ \begin{array}{c|c} A_{s} & \left(\vec{v}_{1}\right)^{(1)}, \ldots , \left(\vec{v}_{t}\right)^{(1)}\\ \hline 0 &   \left(\vec{v}_{1}\right)^{(2)}, \ldots , \left(\vec{v}_{t}\right)^{(2)}\end{array} \right]\\
 %
 \end{align*}
 where $\left(\vec{v}_{i}\right)^{(1)}$ is formed by the the first $s$ entries of the $\oi$-submodule basis vector $\vec{v}_{i}$ and $\left(\vec{v}_{i}\right)^{(2)}$ is formed by its bottom $n-s$ entries.

 We will choose the basis ${\cal B}({\cal C}_{t}) = \{ \vec{v}_{1}, \ldots , \vec{v}_{t} \}$ for ${\cal C}_t$ so that the lower $(n-s)$ entries in each basis vector form an invariant adapted basis for the module (an $\oi$-submodule of $K^{(n-s)}$) generated by the span of $\left(\overline{{\cal C}_{t}}\right)^{(2)}$, denoted $\langle \langle \left(\overline{{\cal C}_{t}}\right)^{(2)} \rangle \rangle$. That is, supposing
 \[ inv \left(\langle \langle \left(\overline{{\cal C}_{t}}\right)^{(2)} \rangle \rangle\right) = (\beta_{1} \geq \cdots \geq \beta_t), \]
 then we will show there exists a basis for $\langle \langle \left(\overline{{\cal C}_{t}}\right)^{(2)} \rangle \rangle$ such that
 \begin{align*}  \left[ \overline{{\cal A}_s} | \overline{{\cal C}_t} \right] &=  \left[ \begin{array}{c|c} A_{s} & \left(\overline{{\cal C}_{t}}\right)^{(1)}\\ \hline 0 &   \left(\overline{{\cal C}_{t}}\right)^{(2)}\end{array} \right] \\
 & = \left[ \begin{array}{c|c} A_{s} & \left(\vec{v}_{1}\right)^{(1)}, \ldots , \left(\vec{v}_{t}\right)^{(1)}\\ \hline 0 &   \left(\vec{v}_{1}\right)^{(2)}, \ldots , \left(\vec{v}_{t}\right)^{(2)}\end{array} \right]\\
& = \left[ \begin{array}{c|c} A_{s} &\left(\vec{v}_{1}\right)^{(1)}, \ \ldots \  , \left(\vec{v}_{t}\right)^{(1)}\\ \hline 0 &   t^{\beta_1} \vec{\ell}_{1},\ \  \ldots\ \  , t^{\beta_t}\vec{\ell}_{t}\end{array} \right]
 \end{align*}
 where $t^{\beta_{i}} \vec{\ell}_{i} = \left(\vec{v}_{i}\right)^{(2)}$, or, equivalently, $\left(\overline{{\cal C}_{t}}\right)^{(2)} = [t^{\beta_1} \vec{\ell}_{1},  \ldots , t^{\beta_t}\vec{\ell}_{t} ]$.
Then, in this case we have
 \[ \|  {\cal A}_s \oplus {\cal C}_t \| = \|  \left[ \overline{{\cal A}_s} | \overline{{\cal C}_t} \right]  \| = \| {\cal A}_{s} \| + \| \left(\overline{{\cal C}_{t}}\right)^{(2)} \|. \]
 Suppose $inv({\cal C}) = (\kappa_{1} \geq \cdots \geq \kappa_{n})$.
 We shall prove that we may replace, if necessary, the basis $\{ \vec{v}_{1}, \ldots , \vec{v}_{t} \}$ for ${\cal C}_t$ with an invariant-adapted basis $\{ t^{\kappa_{n-t+1}}\vec{w}_{1}, \ldots,t^{\kappa_{n}}\vec{w}_{t} \}$ for ${\cal C}\!\downarrow_{n}^{n-t+1}$, denoted by ${\cal C}_{t}'$, such that
 \[ \|  {\cal A}_s \oplus {\cal C}_t \| =  \| {\cal A}_s \oplus {\cal C}_{t}' \|. \]

In particular, we will prove that we may chose a basis $\{ \vec{v}_{1}', \ldots , \vec{v}_{t}' \}$ of the rank $t$ submodule ${\cal C}_{t}'$ so that
\begin{equation} \| \vec{v}_{s}' \| = \kappa_{n-s+1} \label{big invariants} \end{equation}
while \emph{also} ensuring, in the block matrix decomposition above, that
\begin{equation} \| \left( \vec{v}_{s}'\right)^{(2)} \| = \beta_{s}.  \label{quotient invariants} \end{equation}

To start, we must first establish that the we may find a basis of ${\cal C}_{t}$ that has a specific form.

\medskip

\noindent {\bf Claim:} We may choose a basis $\{ \vec{v}_{1}, \ldots , \vec{v}_{t} \}$ for ${\cal C}_t$ such that we may \emph{simultaneously} satisfy:
\begin{enumerate}
 \item $\{ \vec{v}_{1}, \ldots , \vec{v}_{t} \}$ is an \emph{invariant adapted} basis of ${\cal C}_{t}$. That is, if $inv({\cal C}_t) = (\gamma_{1}  \geq \cdots \geq \gamma_t)$, then $\| \vec{v}_{i} \| = \gamma_{i}$.
  \item $\{ \left(\vec{v}_{1}\right)^{(2)}, \ldots , \left(\vec{v}_{t}\right)^{(2)} \}$ is \emph{also} an invariant adapted basis for $\left({\cal C}_{t}\right)^{(2)}$ so that we also have $\| \left(\vec{v}_{i}\right)^{(2)} \| = \beta_i$.
     \end{enumerate}

{\bf Proof of Claim:} We begin in the right-most column of $\left[ \overline{{\cal C}_t} \right] $, denoted $\vec{v}_{t}$. Let $\vec{w}_{t}$ be the vector in some invariant adapted basis $\{ \vec{w}_{1}, \ldots , \vec{w}_{t} \}$  for ${\cal C}_{t}$ corresponding to the \emph{smallest} order invariant factor $\gamma_t$, so $\| \vec{w}_{t}\| = \gamma_t$. The order $\gamma_t$ may be characterized as the smallest order among all the entries in any basis for ${\cal C}_t$ so that, in particular, $\gamma_t$ is minimal among the orders of all entries in the basis $\{ \vec{v}_{1}, \ldots , \vec{v}_{t} \}$.

If there is an entry of order $\gamma_t$ appearing in the column vector $\vec{v}_{t}$, then we already have $\| \vec{v}_{t} \| = \gamma_t$, and we already had $\| \left(\vec{v}_{t}\right)^{(2)} \| = \beta_t$ by hypothesis. If there is no entry of order $\gamma_{t}$ in $\vec{v}_{t}$, then there is a column $\vec{v}_{j}$, for some $j$ with $1 \leq j < t$ such that $\| \vec{v}_{j} \| = \gamma _t$, while every entry in $\vec{v}_{t}$ has order at least $c$ for some $c$ \emph{greater} than $\gamma_t$.

We consider two cases: If $\| t^{\beta_j}\vec{\ell}_{j} + t^{\beta_t}\vec{\ell}_{t}\| = \beta_{t}$, then
we may replace $\vec{v}_{t}$ with $(\vec{v}_{j} + \vec{v}_{t})$. The set
\[ \{ \vec{v}_{1}^{\,\,*}, \ldots , \vec{v}_{t}^{\,\,*} \}= \{ \vec{v}_{1}, \ldots , \vec{v}_{j} \ldots , \vec{v}_{t-1},  (\vec{v}_{j} + \vec{v}_{t})\} \]
is still a basis of ${\cal C}_{t}$, but now we have
\begin{enumerate}
\item $\| \vec{v}_{t}^{\,\,*} \| = \| (\vec{v}_{j} + \vec{v}_{t}) \| = \gamma_t$,
\item $\{ \left(\vec{v}_{1}^{\,\,*}\right)^{(2)}, \ldots , \left(\vec{v}_{t}^{\,\,*}\right)^{(2)} \}= \{ \left(\vec{v}_{1}\right)^{(2)}, \ldots , \left(\vec{v}_{j}\right)^{(2)} \ldots , \left(\vec{v}_{t-1}\right)^{(2)} , \left(\vec{v}_{j} + \vec{v}_{t})\right)^{(2)}\} $ is a basis of $\left({\cal C}_{t}'\right)^{(2)}$.
\item $\|\left(\vec{v}_{t}^{\,\,*})\right)^{(2)}  \|= \| \left(\vec{v}_{j} + \vec{v}_{t})\right)^{(2)} \| = \| \left(\vec{v}_{j}\right)^{(2)} + \left(\vec{v}_{t}\right)^{(2)} \| = \| t^{\beta_j}\vec{\ell}_{j} + t^{\beta_t}\vec{\ell}_{t}\| = \beta_{t}. $
    \end{enumerate}

If, in the other case, $\| t^{\beta_j}\vec{\ell}_{j} + t^{\beta_t}\vec{\ell}_{t}\| > \beta_{t}$, then there must have been ``catastrophic cancellation", implying $\beta_{j} = \beta_{t}$, and since $\beta_{j} \geq \cdots \geq \beta_{t}$, we must have $\beta_{j} = \beta_{j+1} = \cdots = \beta_t$. In this case we simply swap $\vec{v}_{j}$ (which has an element of order $\gamma_{t}$ appearing in it) with $\vec{v}_{t}$ (that is, re-order the basis), so that if
\[ \{ \vec{v}_{1}^{\,\,*}, \ldots , \vec{v}_{t}^{\,\,*} \}= \{ \vec{v}_{1}, \ldots , , \vec{v}_{j-1},\vec{v}_{t} \ldots , \vec{v}_{t-1},  \vec{v}_{j}\}, \]
then
\begin{enumerate}
\item $\| \vec{v}_{t}^{\,\,*} \| = \| \vec{v}_{j} \| = \gamma_t$,
\item $\{ \left(\vec{v}_{1}^{\,\,*}\right)^{(2)}, \ldots , \left(\vec{v}_{t}^{\,\,*}\right)^{(2)} \}$ is a basis of $\left({\cal C}_{t}'\right)^{(2)}$.
\item $\|\left(\vec{v}_{t}^{\,\,*}\right)^{(2)}  \|= \left(\vec{v}_{j}\right)^{(2)} \| = \beta_{t}. $
    \end{enumerate}

Since column $t$ of the basis of ${\cal C}_{t}$ has an entry of minimal order $\gamma_t$ appearing in some row, we may use column operations to ensure that in this row, all entries in the columns $1$ through $t-1$ have $0$ in them, and these invertible operations cannot alter the norm of ${\cal C}_t$ or $({\cal C}_{t})^{(2)}$. Thus, the conditions of the claim may be satisfied in column $t$ of the basis. We may repeat this process on the remaining columns (moving to the left), since the next higher invariant factor, for example, is found by finding the entry of minimal order among the remaining columns $1$ through $t-1$, and the claim is proved.

\medskip

Let us continue with the proof of the Lemma. Again, we start in column $t$ on the right. Let $\{ t^{\kappa_1} \vec{w}_{1} , \ldots , t^{\kappa_{n}}\vec{w}_{n} \}$ be an invariant adapted basis of ${\cal C}$. Our goal is to show that we may replace ${\cal C}_{t}$, if necessary, with ${\cal C}_{t}^{\,\,*}$ such that the minimum $M_{st}$ is realized at ${\cal A}_{s}$ and ${\cal C}_{t}^{\,\,*}$, while $inv({\cal C}_{t}^{\,\,*}) = (\kappa_{n-1+1}, \ldots , \kappa_{n})$. We shall prove this by systematically replacing basis elements of ${\cal C}_{t}$, at each stage maintaining the value $M_{st}$ while ensuring that the order of the basis element in column $j$ is $\kappa_{j}$. We begin in column $t$ of $\overline{{\cal C}_{t}}$, moving left.

\medskip
\noindent {\bf Case 1.} $\| \vec{v}_{t} \| > \kappa_n$. 
%
Let $I$ denote a set of $t$-many row indices at which the norm $\|
[t^{\beta_1} \vec{\ell}_{1},
\ldots  ,
t^{\beta_t}\vec{\ell}_{t} ] \|
$ is realized. That is,
\[ \|[t^{\beta_1} \vec{\ell}_{1},\ldots  ,t^{\beta_t}\vec{\ell}_{t} ] \| = \|\det [t^{\beta_1} \vec{\ell}_{1},\ldots  ,t^{\beta_t}\vec{\ell}_{t} ]_{I} \| \]
where $\|\det[t^{\beta_1} \vec{\ell}_{1},\ldots  ,t^{\beta_t}\vec{\ell}_{t} ]_{I} \| $ denotes the order of the $t \times t$ minor using rows indexed by $I$. Then we must distinguish between two further sub-cases:

\medskip

(a)
$\| \det[t^{\beta_1} \vec{\ell}_{1},
\ldots  ,
t^{\beta_{t-1}}\vec{\ell}_{t-1},
\left( t^{\kappa_{n}} \vec{w}_{n}
\right)^{(2)} ]_{I} \|
> \|
\det[t^{\beta_1} \vec{\ell}_{1},
\ldots  ,
t^{\beta_t}\vec{\ell}_{t} ]_{I} \|
$.
\medskip

In this case, we may replace $\vec{v}_{t}$ with the sum $\vec{v}_{t} + t^{\kappa_{n}} \vec{w}_{n} $. Then $\| \vec{v}_{t} + t^{\kappa_{n}} \| = \kappa_n$, and
\begin{align*} \left\|  \left[ \vec{v}_{1}, \vec{v}_{t-1}, \left(\vec{v}_{t}+ \left( t^{\kappa_{n}} \vec{w}_{n} \right)\right)  \right]^{(2)} \right\| & =
\left\| \left[t^{\beta_1} \vec{\ell}_{1},\ldots  ,t^{\beta_t}\vec{\ell}_{t-1},\left(t^{\beta_1} \vec{\ell}_{1} + \left(t^{\kappa_{n}} \vec{w}_{n} \right)^{(2)} \right) \right] \right\| \\
& \leq \left\| \det\left[t^{\beta_1} \vec{\ell}_{1},\ldots  ,t^{\beta_t}\vec{\ell}_{t-1},\left(t^{\beta_1} \vec{\ell}_{1} + \left(t^{\kappa_{n}} \vec{w}_{n} \right)^{(2)} \right) \right]_{I} \right\| \\
 &= \left\| \det\left[t^{\beta_1} \vec{\ell}_{1},\ldots  ,t^{\beta_t}\vec{\ell}_{t} \right]_{I} +  \det\left[t^{\beta_1} \vec{\ell}_{1},\ldots  , t^{\beta_t}\vec{\ell}_{t-1}, \left( t^{\kappa_{n}} \vec{w}_{n} \right)^{(2)} \right]_{I} \right\| \\
 & = \left\| \det\left[t^{\beta_1} \vec{\ell}_{1},\ldots  ,t^{\beta_t}\vec{\ell}_{t} \right]_{I} \right\| \\
 & = \left\|[t^{\beta_1} \vec{\ell}_{1},\ldots  ,t^{\beta_t}\vec{\ell}_{t} ]\right\|,
\end{align*}
where the penultimate equality follows by the hypothesis in Case 1(a). Since $ \left\|[t^{\beta_1} \vec{\ell}_{1},\ldots  ,t^{\beta_t}\vec{\ell}_{t} ] \right\|$ is, by hypothesis, minimal among all such values, we conclude
\[ \left\|  \left[ \vec{v}_{1}, \ldots , \vec{v}_{t-1}, \left(\vec{v}_{t}+ \left( t^{\kappa_{n}} \vec{w}_{n} \right)\right)  \right]^{(2)} \right\|  = \left\|[t^{\beta_1} \vec{\ell}_{1},\ldots  ,t^{\beta_t}\vec{\ell}_{t} ] \right\|, \]
so that Equations~\ref{big invariants} and~\ref{quotient invariants} hold in column $t$.

\medskip

(b) If, however, $\| \det[t^{\beta_1} \vec{\ell}_{1},
\ldots  ,
t^{\beta_t}\vec{\ell}_{t-1},\left( t^{\kappa_{n}} \vec{w}_{n}\right)^{(2)} ]_{I} \|= \|\det[t^{\beta_1} \vec{\ell}_{1},\ldots ,t^{\beta_t}\vec{\ell}_{t} ]_{I} \|$, we may replace $\vec{v}_{t}$ with  $t^{\kappa_{n}} \vec{w}_{n} $. Then
\begin{align*} \left\|  \left[ \vec{v}_{1},\ldots ,  \vec{v}_{t-1},\left( t^{\kappa_{n}} \vec{w}_{n} \right)  \right]^{(2)} \right\| & =
\left\| \left[t^{\beta_1} \vec{\ell}_{1},\ldots  ,t^{\beta_t}\vec{\ell}_{t-1},\left(t^{\kappa_{n}} \vec{w}_{n} \right)^{(2)}  \right] \right\| \\
& \leq \left\| \det\left[t^{\beta_1} \vec{\ell}_{1},\ldots  ,t^{\beta_t}\vec{\ell}_{t-1}, \left(t^{\kappa_{n}} \vec{w}_{n} \right)^{(2)} \right]_{I} \right\| \\
 & = \left\| \det\left[t^{\beta_1} \vec{\ell}_{1},\ldots  ,t^{\beta_t}\vec{\ell}_{t} \right]_{I} \right\| \\
 & = \left\|[t^{\beta_1} \vec{\ell}_{1},\ldots  ,t^{\beta_t}\vec{\ell}_{t} ] \right\|,
\end{align*}
then we are in the same situation as Case 1(a), and so will have satisfied Equations~\ref{big invariants} and~\ref{quotient invariants} in column $t$.
\medskip

\noindent {\bf Case 2.} $\| \vec{v}_{t} \| = \kappa_n$ . In this case, we have already satisfied Equations~\ref{big invariants} and ~\ref{quotient invariants} in column $t$. Suppose an entry of order $\kappa_{n}$ appears in row $k$ of $\vec{v}_{t}$. Then we need only, by appropriate column operations, clear all entries in row $k$ in columns $1$ through $t-1$ to ensure column $t$ forms a basis vector in an invariant adapted basis of ${\cal C}_{t}$ corresponding to the smallest order invariant factor.

\medskip Thus, after either case, we have satisfied Equations~\ref{big invariants} and ~\ref{quotient invariants} in column $t$, and may now proceed to column $t-1$. Suppose an entry of order $\kappa_{n}$ exists in row $\tau$ of column $t$. This column is now, by the above, also a vector in an invariant adapted basis of ${\cal C}$. Then by column operations we may ensure that we have a zero in all entries in row $\tau$ of $\overline{{\cal C}_t}$ and also in the columns of the invariant adapted basis vectors for the higher invariant factors. Consequently, we may repeat the above constructions in columns $t-1$, then $t-2$, etc., without disturbing the previous steps. At each stage, the value of $\| {\cal C}_t \|$  retains its minimum value, but by the end of the construction we have found a basis for it consisting of basis vectors for an invariant adapted basis of ${\cal C}$ corresponding to the invariant factors of index $n-t+1$ through $n$, so Equations~\ref{big invariants} and~\ref{quotient invariants} are satisfied.

By the symmetry of the hypotheses on the modules ${\cal A}$ and ${\cal C}$ we may prove the corresponding statements for the submodules ${\cal A}_{s}$.

\medskip
Part II. By Part I, we may assume that if

\[ M_{st} = \min \{ \| {\cal A}_{s} \oplus {\cal C}_{t} \| : {\cal A}_{s} \subseteq {\cal A}, \ \ {\cal C}_{t} \subseteq {\cal C} \} \]
is realized at some pair of submodules ${\cal A}_{s}$ and ${\cal C}_{t}$, then, supposing
\[ inv({\cal A}) = (\alpha_{1}, \ldots , \alpha_n) \ \ \hbox{and} \ \ inv({\cal C}) = (c_{1}, \ldots , c_n) , \]
we have
\[ inv({\cal A}_{s}) = (\alpha_{n-s+1}, \ldots , \alpha_n) \ \ \hbox{and} \ \ inv({\cal C}_t) = (c_{n-t+1}, \ldots , c_n) . \]

 \end{proof}

}

\comm{

\begin{lem}[The ``Min Lemma"]
Suppose $V_{k}^{(1)}$ and $V_{k}^{(2)}$ are two submodules of $\oi^n$, both of rank $k$. Then there is a rank $k$ submodule $V^* \subseteq (V_{k}^{(1)} + V_{k}^{(2)})$ such that, for any rank $\ell$ submodule $W$ of $\oi^n$, we have
\[ \| V^* + W \| \leq \| V_{k}^{(1)} + W \|\]
and

\[ \| V^* \| \leq \min \{ \| V_{k}^{(1)} \|, \| V_{k}^{(2)} \| \}. \]
Furthermore, if $V_{k}^{(1)} +W = V_{k}^{(1)}  \oplus W$ (that is, the sum is direct), then we may also assume $V^*$ satisfies $V^* + W = V^* \oplus W$.
we have
\begin{enumerate}
  \item $\| V^* \| \leq \min \{ \|V \|, \| V' \| \}.$
  \item $V^*$ is rank $k$, and if $W$ is a rank $\ell$ submodule such that $V \oplus W$ is rank $\ell+k$, then $V^* \oplus W$ is also of rank $\ell+k$.
  \item
\end{enumerate}
\end{lem}

}

\comm{

\begin{proof} Since $\| V_{k}^{(1)} +W \| = \| V_{k}^{(1)}  \oplus W' \|$, where $W = (V_{k}^{(1)}  \cap W) \oplus W'$ for some appropriate submodule $W' \subseteq W$, we may assume that the sum $V_{k}^{(1)} +W$ is actually direct, so that it is sufficient to prove
\begin{equation} \| [\overline{V^*} | \overline{W} ] \| \leq \left\| [\overline{V_{k}^{(1)} }|\overline{W}]\right\|, \label{min lemma matrix inequality}\end{equation}
which will imply that the sum $V^* + W$ is also direct since, if not then the matrix $[\overline{V^*} | \overline{W} ] $ would have less than full rank, so that $0$ would be among its invariant factors, implying $\| [\overline{V^*} | \overline{W} ] \| = \infty$.

Let us suppose $\{ \vec{v}_{1}^{(1)}, \ldots, \vec{v}_{k}^{(1)} \}$ is a basis for $V_{k}^{(1)}$, and similarly $\{ \vec{v}_{1}^{(2)}, \ldots, \vec{v}_{k}^{(2)} \}$ is a basis for $V_{k}^{(2)}$. We will make a provisional definition of $V^{*}_{k}$ by setting
\[ V^{*} = \langle \langle \left( \vec{v}_{1}^{(1)}+ \vec{v}_{1}^{(2)}\right), \ldots , \left( \vec{v}_{1}^{(1)}+ \vec{v}_{1}^{(2)}\right) \rangle \rangle . \]

Our initial goal is to prove
\begin{equation} \left\| V^* \right\| \leq \min\left\{ \left\| V_{k}^{(1)} \right\| , \left\| V_{k}^{(2)} \right\| \right\}. \label{eq v* < v} \end{equation}
Let us suppose that
\[ \min\left\{ \left\| V_{k}^{(1)} \right\| , \left\| V_{k}^{(2)} \right\| \right\} = \left\| V_{k}^{(1)} \right\|. \]
We determine $\| V^{(1)}_{k} \|$ by computing the minimal order of all $k \times k$ minors of the $n \times k$ matrices formed by the columns of $\overline{V^{(1)}_{k}}$. In particular, let $I = (i_{1}, i_{2}, \ldots , i_{(k)})$ denote a sequence of distinct row indices such that
\[ 1 \leq i_{i} < i_{2} < \cdots  < i_{k} \leq n , \]
and let $\overline{(V_{k}^{(1)} )}_{I}$ denote the $k \times k$ minor of the matrix $\overline{V_{k}^{(1)} }$ using \emph{rows} $I = (i_{1}, i_{2}, \ldots , i_{k})$. We will fix a choice of $I$ such that
\[ \left\| V^{(1)}_{k} \right\| = \left\| \det \left( \overline{(V_{k}^{(1)} )}_{I} \right) \right\|. \]
Then, Inequality~\ref{eq v* < v} will be proved if we can show
\[ \left\| V^* \right\| = \min_{I'} \left\{  \left\| \overline{V^*} _{I'} \right\| \right\} \leq \left\| \overline{V^*} _{I} \right\| \leq  \left\| \det \left( \overline{(V_{k}^{(1)} )}_{I} \right) \right\|. \]

By the multilinearity of the determinant, we have
\begin{align*}  \left\| \overline{V^*} _{I} \right\| & = \left\| \det \left( \left[\left( \vec{v}_{1}^{(1)}+ \vec{v}_{1}^{(2)}\right), \ldots , \left( \vec{v}_{1}^{(1)}+ \vec{v}_{1}^{(2)}\right) \right]_{I} \right) \right\| \\
& = \left\| \sum_{p \in \{1,2\}^{k}} \det \left( \left[ \vec{v}_{1}^{(p(1))}, \vec{v}_{2}^{(p(2))}, \ldots , \vec{v}_{k}^{(p(k))} \right]_{I} \right) \right\|,
 \end{align*}
 where $\{1,2\}^{k}$ denotes the set of all functions $p: \{ 1,2, \ldots , k\} \rightarrow \{ 1,2\}$.

 Suppose $\beta = \left\| \det \left( \overline{(V_{k}^{(1)} )}_{I} \right) \right\| = \min \{ \| V_{k}^{(1)} \|, \| V_{k}^{(2)} \|$. Since $\det \left( \overline{(V_{k}^{(1)} )}_{I} \right)$ is among the determinants in the above sum (it is the term for which $p^{(1)} \in \{1,2\}^{k}$ is identically $1$), the only way the sum could have order greater than $\beta$ would be that there were other determinants, also of order $\beta$, whose sum resulted in a ``catastrophic cancellation". That is, such that the sum of determinants of order $\beta$ had order greater than $\beta$. We shall prove that if such catastrophic canellation exists, we may modify the basis of $V^{(1)}_{k}$ slightly so that this is no longer the case, so that Inequality~\ref{eq v* < v} will hold.

 We shall find an appropriate basis inductively by adding terms from the above sum, at each stage ensuring the order of the sum remains $\beta$. At each stage, we may need to replace basis vectors for $V^{(1)}$ for others, resulting, however, in a basis for the same submodule. We will need some notation. Let $p \in J$, the we shall denote the determinant using columns indexed by $p$ (and always using rows in the fixed index set $I$) as:
 \[ \det \left( V^{p} \right) = \det \left( \left[ \vec{v}_{1}^{(p(1))}, \vec{v}_{2}^{(p(2))}, \ldots , \vec{v}_{k}^{(p(k))} \right]_{I} \right) \]
 and then,
 \[ \underset{i \rightarrow n}{J^{(1)}} = \left\{ p \in J : p(i) = \cdots p(n) = 1 \right\}. \]
 We begin with $\underset{2 \rightarrow n}{J^{(1)}}$, which contains the function $p^{(1)} \in J$ (which is identically $1$) and one other, which we denote by $p'$, so that $p'(1) = 2$, but $p'(i) =1$ for $2 \geq i \geq k$. Since $\| \det (V^{p^{(1)}}) \| =\det \left( \overline{(V_{k}^{(1)} )}_{I} \right)  =  \beta$ by definition, the sum
 \[ \sum_{p \in \underset{2 \rightarrow n}{J^{(1)}}} \det \left( V^{p} \right)  = \det \left( V^{p^{(1)}} \right) + \det \left( V^{p'} \right)\]
 is either of order $\beta$, or greater than $\beta$, meaning there is catastrophic cancellation. If the order of the sum is $\beta$, we proceed to the case $\underset{3 \rightarrow n}{J^{(1)}}$, but if not, we have
 \[ \det \left( V^{p^{(1)}} \right) = u_{1} t^\beta, \qquad \det \left( V^{p'} \right) = u_{2} t^\beta, \]
 and
 \[ u_{1} t^\beta + u_{2} t^\beta = u_3t^{\beta + s} \]
 for units $u_1,u_2,u_3 \in \oi^{\times}$ and a positive integer $s$. That is, $u_1 + u_2 = u_3t^s,$ or rather,
 \[ u_{1} = -u_2 + u_3t^s. \]
 In this case of catastrophic cancellation, we alter the basis for $V^{(1)}$ by replacing the first column $\vec{v}_{1}^{(1)}=\vec{v}_{1}^{(p^{(1)}(1))}$ with its opposite $-\vec{v}_{1}^{(1)}$. This change also changes the sign of the determinant
 \begin{align*} \det \left( \left[ -\vec{v}_{1}^{(p^{(1)}(1))}, \vec{v}_{2}^{(p^{(1)}(2))}, \ldots , \vec{v}_{k}^{(p^{(1)}(k))} \right]_{I} \right)  &= - \det \left( \left[ \vec{v}_{1}^{(p^{(1)}(1))}, \vec{v}_{2}^{(p^{(1)}(2))}, \ldots , \vec{v}_{k}^{(p^{(1)}(k))} \right]_{I} \right) \\
 & = -u_1t^\beta. \end{align*}
 So that, now using the updated basis,
 \[ \sum_{p \in \underset{2 \rightarrow n}{J^{(1)}}} \det \left( V^{p} \right)  = (-u_1 t^\beta) + u_2 t^\beta = 2u_2 t^\beta - u_3 t^{\beta + s}, \]
 which is of order $\beta$, given our hypothesis that the quotient field of $\oi$ is of characteristic $\neq 2$.

 We now proceed inductively, and will assume, for $2 \leq i \leq j$, that
 \[ \left\| \sum_{p \in \underset{i \rightarrow n}{J^{(1)}}} \det \left( V^{p} \right) \right\| = \beta. \]
 We wish to show that we may find a basis of $V^{(1)}$ such that
\begin{equation} \left\| \sum_{p \in \underset{(j+1) \rightarrow n}{J^{(1)}}} \det \left( V^{p} \right) \right\| = \beta \end{equation}
holds as well. To do so, we decompose $\underset{(j+1) \rightarrow n}{J^{(1)}}$ into two disjoint subsets
\[ \underset{(j+1) \rightarrow n}{J^{(1)}} = \underset{j  \rightarrow n}{J^{(1)}} \sqcup \underset{(j+1) \rightarrow n}{J^{(1)2}} \]
where
$\underset{(j+1) \rightarrow n}{J^{(1)2}} $ is the subset with the property that $p \in \underset{(j+1) \rightarrow n}{J^{(1)2}} $ implies $p \in \underset{(j+1) \rightarrow n}{J^{(1)}} $ and $p(j) = 2$.

Similar to the base case, if the sum
\[ \sum_{ p \in \underset{(j+1) \rightarrow n}{J^{(1)}}} \det \left( V^{p} \right)=  \sum_{ p \in \underset{j \rightarrow n}{J^{(1)}}} \det \left( V^{p} \right) + \sum_{ p \in \underset{(j+1) \rightarrow n}{J^{(1)2}}} \det \left( V^{p} \right)\]
is of order $\beta$, we are done. If not, then by the inductive hypothesis we may assume both
\[  \sum_{ p \in \underset{j \rightarrow n}{J^{(1)}}} \det \left( V^{p} \right)  = w_{1} t^\beta \]
and
\[  \sum_{ p \in \underset{(j+1) \rightarrow n}{J^{(1)2}}} \det \left( V^{p} \right)  = w_{2} t^\beta \]
while
\[ w_{1} t^\beta + w_{2} t^\beta = w_{3} t^{\beta + s'} \]
for units $w_1,w_2,w_3 \in \oi^{\times}$ and some positive integer $s'$. In this case, we may replace column $j$ in the basis of $V^{(1)}$ with its opposite. Doing so results in the sum:
\begin{align*} \sum_{ p \in \underset{(j+1) \rightarrow n}{J^{(1)}}} \det \left( V^{p} \right) &=  -\sum_{ p \in \underset{j \rightarrow n}{J^{(1)}}} \det \left( V^{p} \right) + \sum_{ p \in \underset{(j+1) \rightarrow n}{J^{(1)2}}} \det \left( V^{p} \right)\\
& = -w_{1} t^\beta + w_{2} t^\beta \\
& = 2w_2 t^\beta- w_3t^{ \beta + s'} \end{align*}
so the sum is of order $\beta$ at level $(j+1)$ as well. By induction, Inequality~\ref{eq v* < v} is proved.

The proof of Inequality~\ref{min lemma matrix inequality} is similar.

 \end{proof}

 }

\comm{

The following is really a corrollary to the above, or at least to its proof.

\begin{lem}[The ``Max" Lemma]
Suppose $U = V \oplus W$ is a direct sum of free $\oi$-modules, with $V$ of rank $k$ and $V \oplus W$ is of rank $k + m$. Furthermore, let us also suppose that $\{ \vec{b}_{1} , \vec{b}_{2}, \ldots , \vec{b}_{n} \}$ is a compatible basis for $U$, so that if $(d_{1}, \ldots , d_{n})$ is the invariant partition of $U$ (and $d_{1} \geq d_{2} \geq \cdots \geq d_{n}$), where $\| \vec{b}_{i} \| = d_{i}$. Finally, we will also suppose $V = \langle \langle \vec{b}_{1}, \ldots , \vec{b}_{k} \rangle \rangle$, and $W = \langle \langle \vec{b}_{k+1}, \ldots , \vec{b}_{n} \rangle \rangle$. Let $V'$ be another free $\oi$-module, also of rank $k$. We will denote an invariant compatible basis of $V'$ by  $\{ \vec{v}_{1}' , \ldots , \vec{v}_{k}' \}$ of $V'$, in \emph{decreasing} order of invariants, where $\| \vec{v}_{\ell} \| = \rho_{\ell}$. We will denote sums
\[ \vec{v}_{\ell}^{*} = t^{\max \{d_{\ell},\rho_{\ell} \} - d_{\ell}} \vec{v}_{\ell} + t^{\max \{d_{\ell},\rho_{\ell} \} - \rho_{\ell}}\vec{v}_{\ell}', \]
assuming all such sums are generic. Then, if $V^* = \langle \langle \vec{v}_{1}^* , \ldots , \vec{v}_{k}^* \rangle \rangle$, we have
\begin{enumerate}
  \item $\| V^* \| \geq \max \{ \|V \|, \| V' \| \}.$
  \item $V^*$ is rank $k$, and $V^* \oplus W$ is also of rank $k+m$.
  \item $\| V^* \oplus W \| \geq \| V \oplus W \|.$
\end{enumerate}
\end{lem}

\begin{proof} The proof is almost the same as the Min Lemma. In computing any $k \times k$ determinant from $n \times k$ matrix whose columns are the basis of $V^*$ given above, all are at least the maximum of $\| V \|$ and $\| V' \|$, and actually equal
\[ \sum_{\ell = 1}^{k} \max \{ d_{\ell},\rho_{\ell} \} . \]The rest of the proof follows similarly.
\end{proof}
}
\comm{
\begin{df} Given two submodules $V$ and $V'$ of the same rank, along with submodules $W$ and $W'$ such that the direct sums $V \oplus W$ and $V' \oplus W'$ satisfy the hypotheses of either the Min or the Max Lemma above, we shall let
\[ V^* = V \boxplus_{min} V' \]
denote the submodule obtained by the Min Lemma (called the \emph{min-sum} of $V$ and $V'$), and
\[ V^* = V \boxplus_{max} V' \]
denote the submodule obtained by the Max Lemma (called the \emph{max-sum}).

\medskip
Similarly, for matrices, we will denote
\[ \overline{V^*} = \overline{V} \boxplus_{min} \overline{V'} \]
for the matrices whose columns are the corresponding basis entries in the min-sums above, and similarly for max-sums.
\end{df}

By repeated application of the Min or Max Lemma, the following is easily obtained.

\begin{cor} Given submodules
\[ V_{k}^{(1)} \oplus W_{j_{1}}^{(1)},\ \  V_{k}^{(2)} \oplus W_{j_{2}}^{(2)},\  \ldots , \ V_{k}^{(\ell)} \oplus W_{j_{\ell}}^{(\ell)}, \]
such that all submodules $V_{k}^{(i)}$ are rank $k$, the $W_{j_{i}}^{(i)}$ are rank $j_{i}$, and all satisfy the hypotheses of the Min (alternately, Max) Lemma, then setting
\[ V_{k}^* = V_{k}^{(1)} \boxplus_{min} V_{k}^{(2)}\boxplus_{min} \cdots \boxplus_{min} V_{k}^{(\ell)} \]
we have, for each $1 \leq i \leq \ell$,
\[ \| V_{k}^* \| \leq \| V_{k}^{(i)} \|, \quad \text{and} \quad \| V_{k}^* \oplus W_{j_{i}}^{(i)} \| \leq \| V_{k}^{(i)} \oplus W_{j_{i}}^{(i)} \| , \]
and, under the Max Lemma,
\[ \| V_{k}^* \| \geq \| V_{k}^{(i)} \|, \quad \text{and} \quad \| V_{k}^* \oplus W_{j_{i}}^{(i)} \| \geq \| V_{k}^{(i)} \oplus W_{j_{i}}^{(i)} \| . \]
\end{cor}

\begin{cor} Let $V_{k}' = V_{k}^{(1)} \boxplus_{min} V_{k}^{(2)}$ be a min-sum of two rank-$k$ submodules. Suppose also that $M:\oi^n \rightarrow \oi^n$ is an injection. Then
\item \[ \| M(V_{k}') \| \leq \min \left\{ \left\| M(V_{k}^{(1)})  \right\|, \left\| M(V_{k}^{(2)}) \right\| \right\}. \] \label{min image}
\end{cor}
\begin{proof} Let $\{ \vec{v}_{1}^*, \ldots , \vec{v}_{k}^* \}$ be the basis of $V_{k}^*$ constructed from bases of $V_{k}^{(1)}$ and $V_{k}^{(2)}$ as in the proof of the Min Lemma. Computing $ \| M(V_{k}') \|$ amounts to the computation, as in the proof of the Min Lemma:

\begin{align*}  \| \det \, (M\overline{V_{k}'})_{I} \| & = \| \det( [(M\vec{v}_{1}^*, M\vec{v}_{2}^*, \ldots, M\vec{v}_{k}^*)]_{I}) \| \\
& = \| \det \big( [M(\vec{v}_{1} + \vec{v}_{1}'), M( \vec{v}_{2} + \vec{v}_{2}') , \ldots , M( \vec{v}_{k} + \vec{v}_{k}')]_{I} \big) \| \\
& =  \| \det \big( M[\vec{v}_{1} ,  \vec{v}_{2}  , \ldots ,  \vec{v}_{k}]_{I} \big) +
 \det \big( M[\vec{v}_{1}', \vec{v}_{2}' , \ldots , \vec{v}_{k}']_{I} \big) + \text{other determinants}   \| \\
 & \leq \min \{ \| \det(M\overline{V_{k}^{(1)}})_{I} \| , \| \det(M\overline{V_{k}^{(2)}})_{I} \| \},
 \end{align*}
 and from this the claim follows.
\end{proof}
}

 \comm{
 The statement of Theorem~\ref{main theorem} shows we must compute minima and maxima of the norms of various modules. To prove this result, we will need the technical lemma stated below which shows that these extrema may be realized on submodules of rather specific types.
}
\comm{

\begin{lem}
 Let $S$ and $T$ be two elements of $End_{\oi}(\oi^n)$ of full rank.  Let ${\cal S} =S(\oi^n) $ and ${\cal T} = T(\oi^n)$, and suppose $s \leq t$. Then
 \begin{align}  \min_{\substack{ V_{n-t} \oplus W_{t-s}}} \!\!\big(  \left\| S(V_{n-t})+ T(W_{t-s}) \right\| \big) &= \left\| {\cal S}\!\downarrow^{t+1}_{n} \right\| + \left\| \left({\cal T}/  {\cal S}\!\downarrow^{t+1}_{n}\right)\!\!\downarrow_{t}^{s+1}  \right\| \\
 &= \left\| \left({\cal S}/  {\cal T}\!\downarrow_{n}^{n-t+s+1}  \right) \! \downarrow_{n-t+s}^{s+1}\right\| + \left\| {\cal T}\!\downarrow_{n}^{n-t+s+1}  \right\| \end{align}
 In particular,  we may assume the minimum $\min_{\substack{ V_{n-t} \oplus W_{t-s}}} \!\!\big(  \left\| S(V_{n-t})+ T(W_{t-s}) \right\| \big) $ is realized with some fixed choice $V_{n-t} $ such that $S(V_{n-t})= {\cal S}\!\downarrow^{t+1}_{n}$, depending on $S$ and $t$ only, and with $W_{t-s}$ such that $T(W_{t-s}) /  {\cal S}\!\downarrow^{t+1}_{n}= \left({\cal T}/  {\cal S}\!\downarrow^{t+1}_{n}\right)\!\!\downarrow_{t}^{s+1}  $. Or, alternately, with some $W_{t-s}$ such that $T(W_{t-s}) =  {\cal T}\!\downarrow_{n}^{n-t+s+1}$, depending on $T$ and the \emph{difference} $t-s$ only, along with $V_{n-t}$ such that $S(V_{n-t})/ {\cal T}\!\downarrow_{n}^{n-t+s+1} =  \left({\cal S}/  {\cal T}\!\downarrow_{n}^{n-t+s+1}  \right) \! \downarrow_{n-t+s}^{n-t+1}$.

 \medskip

 Similarly,
 \begin{align}  \max_{\substack{ V_{s} \oplus W_{t-s}}} \  \big(  \left\| MN(V_{s}) + N(W_{t-s}) \right\| \big) &= \left\| \Lambda \! \downarrow^{1}_{s} \right\| + \left\| \left( {\cal N} / \Lambda \! \downarrow^{1}_{s} \right) \! \downarrow^{1}_{t-s} \right\| \\
 & = \left\| \left( \Lambda / {\cal N}\! \downarrow^{1}_{t-s} \right) \! \downarrow^{1}_{s} \right\| + \left\| {\cal N} \! \downarrow^{1}_{t-s} \right\|. \end{align} \label{big lemma}
 \end{lem}

 \begin{proof} We first note that the notation ${\cal S}\!\!\downarrow^{t+1}_{n} $ does not necessarily uniquely determine a particular submodule. It denotes \emph{some} submodule such that the sum the orders of its invariant factors is minimal among all submodules of $S(\oi^n)$ of rank $n-t$. Once the norm of any rank $n-t$ submodule has this norm, however, it must, by minimality again, be generated by a compatible basis corresponding to the same minimal $(n-t)$ invariant factors. Thus, the notation ${\cal S}\!\downarrow^{t+1}_{n} $ will generally denote one of many equally minimal submodules, and we will be free to apply this notation to any of them.

 Suppose the minimum of the formula in the statement of the lemma is realized at submodules $V_{n-t}'$ and $W_{t-s}'$, so that
 \[  \min_{\substack{ V_{n-t} \oplus W_{t-s}}} \!\!\big(  \left\| S(V_{n-t})+ T(W_{t-s}) \right\| \big) =  \left\| S(V_{n-t}')+ T(W_{t-s}') \right\| . \]

 Note that we may assume that the rank of $S(V_{n-t}')+ T(W_{t-s}')$ is $(n-s)$ since if not, then $0$ would be among the $(n-s)$ invariant factors of $S(V_{n-t})+ T(W_{t-s})$, and hence we would have $ \left\| S(V_{n-t}')+ T(W_{t-s}') \right\| = \infty$.

 By the Min Lemma we have
 \[ \left\| S(V_{n-t}') \boxplus_{min} {\cal S}\!\downarrow^{t+1}_{n} \right\| \leq \min \left\{ \left\| S(V_{n-t}') \right\| , \left\| {\cal S}\!\downarrow^{t+1}_{n}  \right\| \right\} =  \left\| {\cal S}\!\downarrow^{t+1}_{n}  \right\|, \]
 where the last equality follows by standard facts regarding invariant factors (namely, that $\left\| {\cal S}\!\downarrow^{t+1}_{n}  \right\| $ is the \emph{minimal} sum of the orders of invariant factors among \emph{all} submodules of rank $(n-t)$). Also by the Min Lemma we have
 \begin{align*}  \left\| \left( S(V_{n-t}') \boxplus_{min} {\cal S}\!\downarrow^{t+1}_{n} \right) \oplus T(W_{t-s}') \right\| \leq &  \left\| S(V_{n-t}')+ T(W_{t-s}') \right\| \\
 = &  \min_{\substack{ V_{n-t} \oplus W_{t-s}}} \!\!\big(  \left\| S(V_{n-t})+ T(W_{t-s}) \right\| \big),
 \end{align*}
 from which we conclude
 \[  \min_{\substack{ V_{n-t} \oplus W_{t-s}}} \!\!\big(  \left\| S(V_{n-t})+ T(W_{t-s}) \right\| \big) =  \left\| \left( S(V_{n-t}') \boxplus_{min} {\cal S}\!\downarrow^{t+1}_{n} \right) \oplus T(W_{t-s}') \right\|. \]
 Thus, we may replace our original choice of $V_{n-t}'$ with the inverse image (under $S$) of $ S(V_{n-t}') \boxplus_{min} {\cal S}\!\downarrow^{t+1}_{n}$, so that we may assume the submodule $V_{n-t}'$ above satisfying the formula also satisfies $ S(V_{n-t}')  = {\cal S}\!\downarrow^{t+1}_{n}$.

 \medskip

 Now, by standard arguments regarding the invariant factors of modules, submodules, and quotient modules, and the argument above, we have:
 \begin{align*}  \min_{\substack{ V_{n-t} \oplus W_{t-s}}} \!\!\big(  \left\| S(V_{n-t})+ T(W_{t-s}) \right\| \big)& =  \left\| S(V_{n-t}')+ T(W_{t-s}') \right\| \\
 &=  \left\| {\cal S}\!\downarrow^{t+1}_{n} \right\| + \left\| T(W_{t-s}') / {\cal S}\!\downarrow^{t+1}_{n} \right\| . \end{align*}
 We note that our previous observation regarding the rank of $S(V_{n-t}')+ T(W_{t-s}')$ is necessarily $n-s$ implies that $T(W_{t-s}') / {\cal S}\!\downarrow^{t+1}_{n}$ is necessarily of rank $(t-s)$, which we regard as a submodule of the rank $t$ free part of the quotient module $\left(  {\cal S}\!\downarrow^{t+1}_{n} + {\cal T} \right) /  {\cal S}\!\downarrow^{t+1}_{n} \cong  {\cal T}  / \left( {\cal S}\!\downarrow^{t+1}_{n}  \cap {\cal T} \right),$ which is clearly contained in ${\cal T} /  {\cal S}\!\downarrow^{t+1}_{n} .$

 However, the above characterization of the minimum value achieved in the formula implies that $\left\| T(W_{t-s}') / {\cal S}\!\downarrow^{t+1}_{n} \right\| $ is minimal among all rank $(t-s)$ submodules of the rank $t$ free part of ${\cal T}  /  {\cal S}\!\downarrow^{t+1}_{n} $. That is, by abuse of notation, we must have
 \[ \left\| T(W_{t-s}') / {\cal S}\!\downarrow^{t+1}_{n} \right\|  = \left( {\cal T}  /  {\cal S}\!\downarrow^{t+1}_{n} \right)\!\downarrow_{t}^{t-s+1}. \]

 The notational abuse comes in noting that the module $ {\cal T}  /  {\cal S}\!\downarrow^{t+1}_{n}$ certainly has torsion, but in writing $ \left( {\cal T}  /  {\cal S}\!\downarrow^{t+1}_{n} \right)\!\downarrow_{t}^{t-s+1}$ we are only interested in the smallest $(t-s)$ invariant factors of the \emph{free part} of $ {\cal T}  /  {\cal S}\!\downarrow^{t+1}_{n}$, regarded as a submodule of $\oi^n$.

 Consequently, replacing $W_{t-s}'$ with the min-sum of itself and an appropriate pre-image of $ \left({\cal T}  /  {\cal S}\!\downarrow^{t+1}_{n} \right)\!\downarrow_{t}^{t-s+1}$, we may assume that we already have
 \[  T(W_{t-s}') / {\cal S}\!\downarrow^{t+1}_{n} = \left( {\cal T}  /  {\cal S}\!\downarrow^{t+1}_{n} \right)\!\downarrow_{t}^{t-s+1}, \]
 and the lemma is proved.
  \end{proof}

  }

  \comm{

The content of the lemma above can be seen matricially by considering, for example, the matrix $[ \overline{S(V_{n-t})}| \overline{T(W_{t-s})}]$ whose blocks realize the minima
$ \min_{\substack{ V_{n-t} \oplus W_{t-s}}} \!\!\big(  \left\| S(V_{n-t})+ T(W_{t-s}) \right\| \big)$. By virtue of the minimum condition on $V_{n-t}$ we may suppose that we can find an invertible $n \times n$ matrix $P$ such that
\[ P \overline{S(V_{n-t})} = \left[ \begin{array}{c} \overline{{\cal S} \! \downarrow_{n}^{t+1}} \\ \hline 0  \end{array} \right], \]
where $\overline{{\cal S} \! \downarrow+{n}^{t+1}}$ is an $(n-t) \times (n-t)$ matrix of full rank, and in the above ``$0$" denotes a $t \times (n-t)$ block of zeros. But then, applying $P$ to the matrix $[ \overline{S(V_{n-t})}| \overline{T(W_{t-s})}]$, we obtain
\[ P [ \overline{S(V_{n-t})}| \overline{T(W_{t-s})}] = \left[ \begin{array}{c|c} \overline{{\cal S} \! \downarrow_{n}^{t+1}} & \left( P \overline{T(W_{t-s})} \right)_{1} \\ \hline 0 &   \left( P \overline{T(W_{t-s})} \right)_{2 }\end{array} \right], \]
where $\left( P \overline{T(W_{t-s})} \right)_{1} $ is an $(n-t) \times (t-s)$ block, and   $\left( P \overline{T(W_{t-s})} \right)_{2 }$ is $t \times (t-s)$. Then, by determinants, we see
\begin{align*}  \min_{\substack{ V_{n-t} \oplus W_{t-s}}} \!\!\big(  \left\| S(V_{n-t})+ T(W_{t-s}) \right\| \big) &= \| \overline{{\cal S} \! \downarrow_{n}^{t+1}} \| + \left\| \left( P \overline{T(W_{t-s})} \right)_{2 } \right\| \\
& = \left\| {\cal S}\!\downarrow^{t+1}_{n} \right\| + \left\| \left({\cal T}/  {\cal S}\!\downarrow^{t+1}_{n}\right)\!\!\downarrow_{t}^{s+1}  \right\|  \\ \end{align*}
That is, we may compute the invariants of the quotient $\left({\cal T}/  {\cal S}\!\downarrow^{t+1}_{n}\right)\!\!\downarrow_{t}^{s+1}$ by the norm of the lower block $\left( P \overline{T(W_{t-s})} \right)_{2 } $.
}

\comm{
We also state the following for use in the proof of Theorem~\ref{main theorem} below. It is based on well-known facts regarding modules over discrete valuation rings (and, indeed, principal ideal domains) but is stated in the context most useful to us:

\begin{lem} Let ${\cal S}$ and ${\cal T}$ be two full $\oi$-lattices in $K^n$. Let
\[ \left[ {\cal T}/  {\cal S}\!\downarrow^{t+1}_{n} \right]_{fr} \]
denote the free part of the quotient module ${\cal T}/  {\cal S}\!\downarrow^{t+1}_{n}$. Then
\[ inv  \left( \left[ {\cal T}/  {\cal S}\!\downarrow^{t+1}_{n}\right]_{fr}\right)  = \alpha = (\alpha_{1}, \ldots , \alpha_{t}). \]
and let
\[ \Phi : \left({\cal T}/  {\cal S}\!\downarrow^{t+1}_{n}\right) \rightarrow \left({\cal T}/  {\cal S}\!\downarrow^{t}_{n}\right) \]
be the homomorphism given by taking each coset $\tau + {\cal S}\!\downarrow^{t+1}_{n} \in  \left({\cal T}/  {\cal S}\!\downarrow^{t+1}_{n}\right)$ to $\tau + {\cal S}\!\downarrow^{t}_{n}$ for any $\tau \in {\cal S}$. Suppose
\[ inv \left( \Phi \left({\cal T}/  {\cal S}\!\downarrow^{t+1}_{n}\right) \right) = \beta = (\beta_{1}, \ldots , \beta_{t-1}). \]
Then $\beta$ interlaces $\alpha$. That is,
\[ \alpha_{1} \geq \beta_{1} \geq \alpha_{2} \geq \beta_{2} \geq \cdots \geq \alpha_{t-1} \geq \beta_{t-1} \geq \alpha_{t}. \] \label{hom interlacing}
\end{lem}

\begin{proof} Since free modules over discrete valuation rings are projective, the rank $(t-1)$ image of the homomorphism
$\Phi$ splits. That is, there is an injection $\Psi: \Phi \left({\cal T}/  {\cal S}\!\downarrow^{t+1}_{n}\right) \rightarrow \left({\cal T}/  {\cal S}\!\downarrow^{t+1}_{n}\right)$ such that $\Phi \circ \Psi$ is the identity. In particular, $\Phi \left({\cal T}/  {\cal S}\!\downarrow^{t+1}_{n}\right)$ is isomorphic (under $\Psi$) to a co-rank $1$ submodule of $\left({\cal T}/  {\cal S}\!\downarrow^{t+1}_{n}\right)$. By well-known results (see~\cite{carlson-sa}) the invariant factors of this submodule interlace those of $\left({\cal T}/  {\cal S}\!\downarrow^{t+1}_{n}\right)$, and the result follows.
\end{proof}

}

\section{Satisfying the Rhombus Inequalities}

We fix some notation for the rest of the paper. Let $\la,\n \in \Gr$ be two full-rank lattices, with $inv(\la) = \lambda$, $inv(\n) = \nu$, and suppose $(\n, \la)$ is in the same $\K$-orbit as $(I,\m)$, so that we have, by Definition~\ref{pair invariant}, $inv(\n, \la) = inv(\m) = \mu$. In particular, under any matrix identification of these lattices, we have $\la = \n \m$ (as a product of $n \times n$ matrices over $\K$ of full rank).

\begin{thm}
     Choose $\Lambda,N \in \Gr$. Let the invariant partition of $\la$ be $inv(\Lambda) = \lambda = (\lambda_{1} , \ldots , \lambda_{n})$, and let $| \lambda | = \lambda_{1} + \cdots + \lambda_n$.

      Setting
\begin{align}   h_{st} &=  \  | \lambda | - \min_{ \Lambda_{n-t} \oplus {\scriptstyle{\cal N}_{t-s}}} \!\!\big(  \left\| \Lambda_{n-t}\oplus \n_{t-s} \right\| \big)
     \label{min formula} \end{align}
     the collection of numbers $\{ h_{ij} \}$ satisfies the rhombus inequalities found in Definition~\ref{hive definition}, and so forms a hive. \label{rhombus}
\end{thm}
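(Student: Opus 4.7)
The plan is to verify each of the three rhombus inequalities from Definition~\ref{hive definition} separately. Setting $M_{st} := |\lambda| - h_{st} = \min\|\la_{n-t} \oplus \n_{t-s}\|$ and writing $M_{st} = f(a,c)$ with $(a,c) := (n-t, t-s)$, the right-leaning, left-leaning, and vertical rhombus inequalities on $h$ translate, upon negation, into the three ``submodularity-type'' inequalities
\begin{align*}
f(a,c) + f(a+1,c) &\leq f(a,c+1) + f(a+1,c-1),\\
f(a,c) + f(a+1,c-1) &\leq f(a+1,c) + f(a,c-1),\\
f(a,c) + f(a,c-1) &\leq f(a-1,c) + f(a+1,c-1),
\end{align*}
respectively. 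In each case the four $(a,c)$-pairs differ by unit steps, and the goal is to bound the left-hand sum of minima from above by the right-hand sum.

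For each inequality, we apply Lemma~\ref{amalgam} to the two right-hand side minima to obtain realizations at \emph{compatibly nested} submodules. Applied first with $\mathcal{C} = \n$, it presents the smaller-rank $\n$-submodule as the tail of an invariant adapted basis of the larger. Rerunning the lemma with the roles of $\mathcal{A}$ and $\mathcal{C}$ swapped, keeping the previously modified $\n$-submodules fixed (Lemma~\ref{amalgam} alters only the $\mathcal{C}$-side, so the first application is not disturbed), likewise nests the $\la$-submodules. From these nested adapted bases we then produce candidate submodule pairs for the two left-hand side minima by adding or dropping a single adapted-basis vector; each candidate provides an upper bound on the relevant minimum, reducing the problem to showing that the sum of the two candidate norms is at most the sum of the two right-hand side minima.

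The key computation uses Lemma~\ref{normal} to place the relevant combined matrix (built from the largest $\la$- and $\n$-submodules appearing in the inequality) in block upper triangular normal form. In this form each of the four direct-sum norms decomposes as a sum of diagonal block contributions together with residual contributions from the off-diagonal block. The columns corresponding to the adapted-basis vectors that are ``exchanged'' between the two sides contribute specific invariant factors, so the difference between the two sides collapses to a short linear combination of these invariants. The non-increasing ordering of invariant factors along an adapted basis, combined with the minimality built into the right-hand side realizations via Lemma~\ref{amalgam}, then yields each of the three inequalities.

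The main obstacle will be the vertical inequality, in which the $\la$-ranks span three values $\{a-1, a, a+1\}$ rather than two. Here one must arrange a nested triple $\la_{a-1} \subset \la_a \subset \la_{a+1}$ in a common adapted basis, and the ``cost'' of adding a basis vector to $\la_k \oplus \n_c$ depends nontrivially on both $k$ and $c$ through the off-diagonal block of the normal form. The required inequality reduces to a discrete convexity statement, roughly that adding the higher-invariant basis vector of $\la_{a+1}$ to a smaller direct sum costs at least as much as adding the lower-invariant vector to a larger one. We expect this to follow from the monotonicity of the invariants along the adapted basis together with the extremality of the nested submodules guaranteed by Lemma~\ref{amalgam}.
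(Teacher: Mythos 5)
Your translation of the three rhombus inequalities into submodularity-type statements for $f(a,c) := \min\|\la_a \oplus \n_c\|$ is exactly right, and the idea of reducing to Lemma~\ref{amalgam} and Lemma~\ref{normal} is the correct general strategy. However, there are several genuine gaps.

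\textbf{You have the hard case backwards.} You expect the vertical inequality to be the main obstacle because the $\la$-ranks span three values $\{a-1,a,a+1\}$. But in the right-leaning inequality $f(a,c)+f(a+1,c)\leq f(a,c+1)+f(a+1,c-1)$ it is the $\n$-ranks that span three values $\{c-1,c,c+1\}$; setting $g(a,c) := f(c,a)$ and shifting $c \mapsto c-1$ turns the vertical inequality for $f$ into the right-leaning inequality for $g$. Since $\|\la_a \oplus \n_c\| = \|\n_c \oplus \la_a\|$ and Lemma~\ref{amalgam} is stated for arbitrary lattices $\mathcal{A}$ and $\mathcal{C}$, the vertical case follows verbatim from the right-leaning one by relabeling. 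The genuinely distinct case is the \emph{left-leaning} inequality: there all four ranks lie in $\{a,a+1\}\times\{c-1,c\}$, and when one pushes through the block normal form argument, the two diagonal $\la$-blocks turn out to be the same size but \emph{shifted by one row} relative to each other in their ambient matrices, which changes the final invariant-factor comparison. The paper addresses this explicitly, and your proposal never engages with it.

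\textbf{The candidate construction is underspecified and the closing step is asserted, not proved.} You plan to nest only the two right-hand minima via Lemma~\ref{amalgam} and then build candidate submodule pairs for the left-hand minima ``by adding or dropping a single adapted-basis vector.'' But the left-hand side involves $\la$-submodules of rank $n-j+1$, while the nested right-hand realization only produces a $\la$-submodule of rank $n-j$; Lemma~\ref{amalgam} does not hand you a preferred rank-$(n-j+1)$ extension, and the norm of a direct sum $\|\la_{a}\oplus\n_{c}\|$ does not split linearly into $\|\la_a\|+\|\n_c\|$, so it is not clear that any single-vector extension yields an upper bound on the LHS that beats the RHS. The paper instead applies Lemma~\ref{amalgam} three times in a carefully ordered chain (its Conditions A, B, C) that nests submodules \emph{across} the two sides of the inequality — nesting LHS $\n$-modules, then nesting one LHS $\la$-module with one RHS $\la$-module, then nesting RHS $\n$-modules — and then uses a separate minimality argument and matricial identity to replace $\la_{n-j+1}^{(i,j-1)}$ and $\la_{n-j}^{(ij)}$ by the common nested module $\la^B$. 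This chaining is where the real work is, and your proposal replaces it with ``we expect this to follow.'' Also note that your ``combined matrix built from the largest $\la$- and $\n$-submodules'' would have $(n-j+1)+(j-i+1)=n-i+2$ columns, which exceeds $n$ when $i=1$, so the block upper triangular normal form you invoke need not even make sense for it; the paper avoids this by only ever putting into normal form matrices whose column count is the rank-sum of a single term of the inequality, all of which are $\leq n$.
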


\begin{proof}

We must prove that the numbers $\{ h_{ij} \}$ given by Equation~\eqref{min formula} satisfy the rhombus inequalities:

\begin{enumerate}
\item {\em Right-Leaning:} $h_{ij}+h_{i-1,j-1} \geq h_{i-1, j} + h_{i,j-1}$, for $1
\leq i < j \leq n$.
\item {\em Left-Leaning:} $h_{i,j} + h_{i,j-1} \geq h_{i-1,j-1} + h_{i+1,j}$, for $1
\leq i < j \leq n$.
\item {\em Vertical:} $h_{ij} + h_{i+1,j} \geq h_{i+1,j+1} + h_{i,j-1}$, for $1
\leq i < j \leq n$.
\end{enumerate}
All three inequalities are proved in essentially the same way, depending quite explicitly on the characterization given by Lemma~\ref{amalgam} for the minima appearing in Equation~\eqref{min formula}, but depending on slightly different interlacing inequalities in each case.

\bigskip
\bigskip
\noindent \underline{Right-Leaning Rhombus Inequality for the $\{ h_{ij} \}$.}

Let us suppose the minima given by Equation~\eqref{min formula} are realized at specific submodules:

\begin{align*} h_{ij} & =   \  | \lambda | - \min_{ \Lambda_{n-j} \oplus {\scriptstyle{\cal N}_{j-i}}} \!\!\big(  \left\| \la_{n-j}  \oplus  \n_{j-i} \right\| \big) \\
& = | \lambda | - \left(  \left\| \la_{n-j}^{(ij)}  \oplus  \n_{j-i}^{(ij)} \right\| \right) ,
\end{align*}
where the superscripts will denote the indices of the proposed hive entry to which it corresponds, and the subscripts will denote, as always, the ranks of the submodules.


We use this to replace each entry in the right-leaning rhombus inequality:
\[ h_{ij}+h_{i-1,j-1} \geq h_{i-1, j} + h_{i,j-1}. \]

We may then re-write the above (after subtracting all the constants $| \lambda |$ appearing on both sides), and are left with proving:

\[ \left\| \la_{n-j}^{(ij)}  \oplus  \n_{j-i}^{(ij)} \right\|  +
 \left\| \la_{n-j+1}^{(i-1,j-1)}  \oplus  \n_{j-i}^{(i-1,j-1)} \right\|
 \leq
 \left\| \la_{n-j}^{(i-1,j)}  \oplus  \n_{j-i+1}^{(i-1,j)} \right\|  +
\left\| \la_{n-j+1}^{(i,j-1)}  \oplus  \n_{j-i-1}^{(i,j-1)} \right\| \]
which we shall re-write as:

\begin{multline}
 \left\| \la_{n-j+1}^{(i-1,j-1)}  \oplus  \n_{j-i}^{(i-1,j-1)} \right\|
 - \left\| \la_{n-j+1}^{(i,j-1)}  \oplus  \n_{j-i-1}^{(i,j-1)} \right\|
 \leq \\
 \left\| \la_{n-j}^{(i-1,j)}  \oplus  \n_{j-i+1}^{(i-1,j)} \right\|
  -\left\| \la_{n-j}^{(ij)}  \oplus  \n_{j-i}^{(ij)} \right\| . \label{main} \end{multline}

We shall now perform a series of substitutions to the modules appearing in the above inequality, using Lemma~\ref{amalgam} and other arguments, which will imply Inequality~\ref{main}. To start, by Lemma~\ref{amalgam} we may replace the submodules $\n_{j-i}^{(i-1,j-1)}$ and $ \n_{j-i-1}^{(i,j-1)}$, appearing on the left side of Inequality~\ref{main}, with submodules $\n_{j-i}^{A}$ and $ \n_{j-i-1}^{A}$ such that

\begin{equation} \n_{j-i-1}^{A}= \left(\n_{j-i}^{A}\right)\!|^{2}_{j-i}, \label{condition A} \end{equation}
while maintaining the equalities
\[  \left\| \la_{n-j+1}^{(i-1,j-1)}  \oplus  \n_{j-i}^{A} \right\| =  \left\| \la_{n-j+1}^{(i-1,j-1)}  \oplus  \n_{j-i}^{(i-1,j-1)} \right\| \]
and
\[  \left\| \la_{n-j+1}^{(i,j-1)}  \oplus  \n_{j-i-1}^{A} \right\| =  \left\| \la_{n-j+1}^{(i,j-1)}  \oplus  \n_{j-i-1}^{(i,j-1)} \right\|. \]

In similar fashion, using Lemma~\ref{amalgam}, we will replace the modules $\la_{n-j+1}^{(i-1,j-1)}$ and $ \la_{n-j}^{(i-1,j)}$ (the left-most summands appearing in both the left and right members of Inequality~\ref{main}) with $\la_{n-j+1}^{B}$ and $ \la_{n-j}^{B}$ such that
\begin{equation} \la_{n-j}^{B}= \left(\la_{n-j+1}^{B}\right)\!|^{2}_{n-j+1}, \label{condition B} \end{equation}
while maintaining the equalities
\[ \left\| \la_{n-j+1}^{B}  \oplus  \n_{j-i}^{A} \right\| = \left\| \la_{n-j+1}^{(i-1,j-1)}  \oplus  \n_{j-i}^{A} \right\| \]
and
\[ \left\| \la_{n-j}^{B}  \oplus  \n_{j-i+1}^{(i-1,j)} \right\| = \left\| \la_{n-j}^{(i-1,j)}  \oplus  \n_{j-i+1}^{(i-1,j)} \right\|. \]
Thirdly, we will now replace the submodules $ \n_{j-i+1}^{(i-1,j)} $ and $ \n_{j-i}^{(ij)}$, appearing as the two right-hand summands in the right member of Inequality~\ref{main}, with submodules $ \n_{j-i+1}^{C} $ and $ \n_{j-i}^{C}$ such that
\begin{equation}  \n_{j-i}^{C}= \left(\n_{j-i+1}^{C} \right)\!|^{2}_{n-j+1}, \label{condition C} \end{equation}
while maintaining the equalities
\[  \left\| \la_{n-j}^{B}  \oplus  \n_{j-i+1}^{C} \right\| =  \left\| \la_{n-j}^{(i-1,j)}  \oplus  \n_{j-i+1}^{C} \right\| \]
and
\[ \left\| \la_{n-j}^{(ij)}  \oplus  \n_{j-i}^{(ij)} \right\| = \left\| \la_{n-j}^{(ij)}  \oplus  \n_{j-i}^{(ij)} \right\|. \]

Making these replacements, we see that proving Inequality~\ref{main} above is equivalent to proving
\begin{equation}
 \left\| \la_{n-j+1}^{B}  \oplus  \n_{j-i}^{A} \right\|
 - \left\| \la_{n-j+1}^{(i,j-1)}  \oplus  \n_{j-i-1}^{A} \right\|
 \leq
 \left\| \la_{n-j}^{B}  \oplus  \n_{j-i+1}^{C} \right\|
  -\left\| \la_{n-j}^{(ij)}  \oplus  \n_{j-i}^{C} \right\|, \label{main ABC} \end{equation}
 subject to the conditions~\ref{condition A},~\ref{condition B}, and~\ref{condition C} above.

 We now proceed somewhat differently. Let us first consider $\la_{n-j}^{B}  \oplus  \n_{j-i+1}^{C}$ and $ \la_{n-j}^{(ij)}  \oplus  \n_{j-i}^{C}$, the submodules appearing in the right member of Inequality~\ref{main ABC}. Since the sum $\la_{n-j}^{B}  \oplus  \n_{j-i+1}^{C}$ is direct, we may conclude (by Condition~\ref{condition C}) that the sum $ \la_{n-j}^{B}  \oplus  \n_{j-i}^{C}$ is direct as well. However, since $\left\| \la_{n-j}^{(ij)}  \oplus  \n_{j-i}^{C} \right\|$ is the minimal value among such summands, we have
 \[ \left\| \la_{n-j}^{(ij)}  \oplus  \n_{j-i}^{C} \right\| \leq \left\| \la_{n-j}^{B}  \oplus  \n_{j-i}^{C} \right\| \]
 and consequently
 \[  \left\| \la_{n-j}^{B}  \oplus  \n_{j-i+1}^{C} \right\|
  -\left\| \la_{n-j}^{B}  \oplus  \n_{j-i}^{C} \right\| \leq
   \left\| \la_{n-j}^{B}  \oplus  \n_{j-i+1}^{C} \right\|
  -\left\| \la_{n-j}^{(ij)}  \oplus  \n_{j-i}^{C} \right\|, \]
 so that Inequality~\ref{main  ABC} above will be implied by
 \begin{equation}
 \left\| \la_{n-j+1}^{B}  \oplus  \n_{j-i}^{A} \right\|
 - \left\| \la_{n-j+1}^{(i,j-1)}  \oplus  \n_{j-i-1}^{A} \right\|
 \leq
 \left\| \la_{n-j}^{B}  \oplus  \n_{j-i+1}^{C} \right\|
  -\left\| \la_{n-j}^{B}  \oplus  \n_{j-i}^{C} \right\| \label{main ABBC}. \end{equation}

To proceed we now work matricially. Let us use matrix representations $\left[ \overline{\la_{n-j+1}^{B}}\right], \left[\overline{\n_{j-i}^{A}}\right]$, etc., in the above, where, for example, $\left[\overline{\la_{n-j+1}^{B}}\right]$ denotes an $n \times (n-j+1)$ matrix whose columns span the submodule $\la_{n-j+1}^{B}$. Thus, we may re-express Inequality~\ref{main ABBC} as:
\[
 \left\| \overline{\la_{n-j+1}^{B}}  | \overline{ \n_{j-i}^{A}}  \right\|
 - \left\| \overline{\la_{n-j+1}^{(i,j-1)}}  | \overline{ \n_{j-i-1}^{A}} \right\|
 \leq
 \left\| \overline{\la_{n-j}^{B}}  | \overline{ \n_{j-i+1}^{C}} \right\|
  -\left\| \overline{\la_{n-j}^{B}}  |  \overline{\n_{j-i}^{C}} \right\| \]

We claim, given modules (column vectors) $ \overline{\la_{n-j+1}^{B}}$, $ \overline{ \n_{j-i}^{A}} $,$ \overline{\la_{n-j+1}^{(i,j-1)}} $, $ \overline{ \n_{j-i-1}^{A}} $ above, that
\[ \left\| \overline{\la_{n-j+1}^{(i,j-1)}}  | \overline{ \n_{j-i-1}^{A}} \right\| = \left\| \overline{\la_{n-j+1}^B}  | \overline{ \n_{j-i-1}^{A}} \right\| . \]

The matrix $ \left[ \overline{\la_{n-j+1}^B}  | \overline{ \n_{j-i}^{A}} \right]$ is is composed of the left hand block $\overline{\la_{n-j+1}^B} $, whose columns form an $n \times (n-j+1)$ matrix, and then the block $\overline{ \n_{j-i}^{A}}$, forming an $n \times (j-i+1)$ matrix. Thus, there is an invertible $n \times n$ matrix $P$ of row operations, and a square matrix $Q$ of column operations with $(n-j+1) + (j-i)$ rows such that
\[ P\left[  \overline{\la_{n-j+1}^B}  | \overline{ \n_{j-i}^{A}} \right] Q = \left[ \begin{array}{c|c} \left(\overline{\la_{n-j+1}^B} \right)^{(1)} &  \left(\overline{ \n_{j-i}^{A}} \right)^{(1)}  \\ \hline 0 & \left(\overline{ \n_{j-i}^{A}} \right)^{(2)} \\ \hline 0 & 0 \end{array} \right] \]
where $\left(\overline{\la_{n-j+1}^B} \right)^{(1)}$ is a square matrix with $n-j+1$ rows, and $\left(\overline{ \n_{j-i}^{A}} \right)^{(2)}$ is a square matrix with $j-i$ rows, and all the $0$'s denote matrices of zeros of appropriate size. By Equation~\ref{condition A} we have
\[ \n_{j-i-1}^{A}= \left(\n_{j-i}^{A}\right)\!|^{2}_{j-i}, \]
so that we may actually assume
\[ P\left[  \overline{\la_{n-j+1}^B}  | \overline{ \n_{j-i}^{A}} \right] Q = \left[ \begin{array}{c|c} \left(\overline{\la_{n-j+1}^B} \right)^{(1)} &  \left(\overline{ \n_{j-i}^{A}} \right)^{(1)}  \\ \hline 0 & \begin{array}{c|c} t^{\beta_{1}} & 0 \\ \hline 0 & \left(\overline{ \n_{j-i-1}^{A}} \right)^{(2)} \end{array} \\ \hline 0 & 0 \end{array} \right], \]
where $\beta_{1}$ is the order of the largest invariant factor of $\left(\overline{ \n_{j-i}^{A}} \right)^{(2)}$.

We note that we may multiply all the matrices in Inequality~\ref{main ABBC} on the left by the invertible matrix $P$ without changing the orders of the invariants in any terms. Further, arbitrary column operations within each matrix block \emph{separately} are permissible since we are only computing the orders of the invariant factors of the modules that the columns span. Thus, we may simultaneously assume that we have both

\[ \left[  \overline{\la_{n-j+1}^B}  | \overline{ \n_{j-i}^{A}} \right]  = \left[ \begin{array}{c|c} \left(\overline{\la_{n-j+1}^B} \right)^{(1)} &  \left(\overline{ \n_{j-i}^{A}} \right)^{(1)}  \\ \hline 0 & \begin{array}{c|c} t^{\beta_{1}} & 0 \\ \hline 0 & \left(\overline{ \n_{j-i-1}^{A}} \right)^{(2)} \end{array} \\ \hline 0 & 0 \end{array} \right], \]
and also
\[ \left[   \overline{\la_{n-j+1}^{(i,j-1)}}  | \overline{ \n_{j-i-1}^{A}} \right]  = \left[ \begin{array}{c|c} \left( \overline{\la_{n-j+1}^{(i,j-1)}}  \right)^{(1)} &  \left(\overline{ \n_{j-i}^{A}} \right)^{(1)}  \\ \hline  \left( \overline{\la_{n-j+1}^{(i,j-1)}}  \right)^{(2)} & \begin{array}{c} 0 \\ \hline  \left(\overline{ \n_{j-i-1}^{A}} \right)^{(2)} \end{array} \\ \hline  \left( \overline{\la_{n-j+1}^{(i,j-1)}}  \right)^{(3)} & 0 \end{array} \right]. \]

If a row operation adds any row in the block $ \left( \overline{\la_{n-j+1}^{(i,j-1)}}  \right)^{(3)}$ \emph{upwards} into the rows of the block $\left( \overline{\la_{n-j+1}^{(i,j-1)}}  \right)^{(1)}$, or even in the top row of the block $ \left(  \overline{\la_{n-j+1}^{(i,j-1)}}  \right)^{(2)}$ (adding similarly upwards), then not only will the form of the matrix $\left[   \overline{\la_{n-j+1}^{(i,j-1)}}  | \overline{ \n_{j-i-1}^{A}} \right]$ be preserved (preserving the blocks of zeros on the right in $\left[ \overline{ \n_{j-i-1}^{A}} \right]$) but it will \emph{also} fix the matrix $ \left[  \overline{\la_{n-j+1}^B} \right]$. Thus, we may assume both
\[ \left\| \overline{\la_{n-j+1}^B}  | \overline{ \n_{j-i-}^{A}} \right\| = \left\|  \left(\overline{\la_{n-j+1}^B} \right)^{(1)} \right\| + \beta_{1} + \left\|  \left(\overline{ \n_{j-i-1}^{A}} \right)^{(2)}  \right\|, \]
as well as
\[  \left\| \overline{\la_{n-j+1}^{(i,j-1)}}  | \overline{ \n_{j-i-1}^{A}} \right\| = \left\|  \left( \overline{\la_{n-j+1}^{(i,j-1)}}  \right)^{(1)} \right\| + \left\|  \left(\overline{ \n_{j-i-1}^{A}} \right)^{(2)} \right\|. \]
However, since these orders above are both \emph{minimum} among all matrices of the appropriate size, we must have
\[  \left\|  \left(\overline{\la_{n-j+1}^B} \right)^{(1)} \right\| =  \left\|  \left( \overline{\la_{n-j+1}^{(i,j-1)}}  \right)^{(1)} \right\| \]
since, if one had order strictly smaller than the other, we could replace one of the blocks above with a block of smaller order, contradicting the claim we reached the minimum. Consequently, we have proved our claim, and conclude:
\[ \left\| \overline{\la_{n-j+1}^{(i,j-1)}}  | \overline{ \n_{j-i-1}^{A}} \right\| = \left\| \overline{\la_{n-j+1}^B}  | \overline{ \n_{j-i-1}^{A}} \right\| .  \]
Thus, it remains to prove that

 \begin{equation}
 \left\| \la_{n-j+1}^{B}  \oplus  \n_{j-i}^{A} \right\|
 - \left\| \la_{n-j+1}^{B}  \oplus  \n_{j-i-1}^{A} \right\|
 \leq
 \left\| \la_{n-j}^{B}  \oplus  \n_{j-i+1}^{C} \right\|
  -\left\| \la_{n-j}^{B}  \oplus  \n_{j-i}^{C} \right\| \label{final} \end{equation}
  subject to the conditions of Equations~\ref{condition A},~\ref{condition B}, and~\ref{condition C}. Arguing matricially again allows us to assume the above inequality may be expressed as:

 \begin{multline*}  \left\| \begin{array}{c|c} \left(\overline{ \la_{n-j+1}^{B}}\right)^{(1)} &  \left(\overline{ \n_{j-i}^{A}} \right)^{(1)}  \\ \hline 0 & \left( \overline{\n_{j-i}^{A}}\right)^{(2)} \\ \hline
 0 & 0 \end{array} \right\|
 -  \left\| \begin{array}{c|c} \left(\overline{ \la_{n-j+1}^{B}}\right)^{(1)} &  \left(\overline{ \n_{j-i-1}^{A}} \right)^{(1)}  \\ \hline 0 & \left( \overline{\n_{j-i-1}^{A}}\right)^{(2)} \\ \hline
 0 & 0 \end{array} \right\|  \leq \\
  \left\| \begin{array}{c|c}  \left(\overline{ \la_{n-j}^{B}}\right)^{(1)} &  \left( \overline{\n_{j-i+1}^{C}}\right)^{(1)}  \\ \hline 0 & \left( \overline{\n_{j-i+1}^{C}}\right)^{(2)} \\ \hline
 0 &  \left( \overline{\n_{j-i+1}^{C}}\right)^{(3)} \end{array} \right\|
 -  \left\| \begin{array}{c|c}   \left(\overline{ \la_{n-j}^{B}}\right)^{(1)} & \left( \overline{\n_{j-i}^{C}}\right)^{(1)} \\ \hline 0 & \left( \overline{\n_{j-i}^{C}}\right)^{(2)} \\ \hline
 0 &  \left( \overline{\n_{j-i}^{C}}\right)^{(3)}\end{array} \right\|, \end{multline*}

By row operations (applied across all four matrix pairs simultaneously) and column operations on the left-hand blocks we may assume that \emph{both} the blocks  $\left(\overline{ \la_{n-j+1}^{B}}\right)^{(1)} $ and $ \left(\overline{ \la_{n-j}^{B}}\right)^{(1)}$ are diagonal (and, in particular, $ \left(\overline{ \la_{n-j}^{B}}\right)^{(1)}$ is the upper left corner of $\left(\overline{ \la_{n-j+1}^{B}}\right)^{(1)} $). This, in turn, implies that the bottom row of the block  $\left( \overline{\n_{j-i}^{A}}\right)^{(2)}$ is at the same height as the bottom row of $ \left( \overline{\n_{j-i+1}^{C}}\right)^{(2)}$.

By row operations below row $n-j+1$, and column operations on the right-hand block, we may assume $ \left( \overline{\n_{j-i}^{A}}\right)^{(2)} $ is a diagonal matrix whose invariant factors are decreasing down the diagonal.

 The matrix order
 \[ \left\| \begin{array}{c|c}  \left(\overline{ \la_{n-j}^{B}}\right)^{(1)} &  \left( \overline{\n_{j-i+1}^{C}}\right)^{(1)}  \\ \hline 0 & \left( \overline{\n_{j-i+1}^{C}}\right)^{(2)} \\ \hline
 0 &  \left( \overline{\n_{j-i+1}^{C}}\right)^{(3)} \end{array} \right\| \]
 is the sum of the order of the upper left block $ \left(\overline{ \la_{n-j}^{B}}\right)^{(1)}$
 and the minimal order among all $(j-i+1) \times (j-i+1)$ minors formed within the combined rows of the blocks $\left(\overline{\n_{j-i+1}^{C}}\right)^{(2)} $ and $\left(\overline{\n_{j-i+1}^{C}}\right)^{(3)}$. That is, we must compute the matrix order
 \[ \left\| \begin{array}{c}  \left( \overline{\n_{j-i+1}^{C}}\right)^{(2)} \\ \hline
  \left( \overline{\n_{j-i+1}^{C}}\right)^{(3)} \end{array} \right\| . \]
  To accomplish this, we may actually perform row operations (on all four matrix pairs simultaneously) by adding multiples of the rows in $ \left( \overline{\n_{j-i+1}^{C}}\right)^{(3)}$ to \emph{higher} rows of the block  $\left( \overline{\n_{j-i+1}^{C}}\right)^{(2)}$, since such operations will preserve the block decomposition of the matrix pair
\[ \left[ \begin{array}{c|c} \left(\overline{ \la_{n-j+1}^{B}}\right)^{(1)} &  \left(\overline{ \n_{j-i}^{A}} \right)^{(1)}  \\ \hline 0 & \left( \overline{\n_{j-i}^{A}}\right)^{(2)} \\ \hline
 0 & 0 \end{array} \right] \]
 and also the diagonal invariants in the block $ \left( \overline{\n_{j-i}^{A}}\right)^{(2)}$. Thus, we may ensure that the determinant of minimal order is the block $ \left( \overline{\n_{j-i+1}^{C}}\right)^{(2)}$. Further, we may then, by column operations, ensure that this block, too, is in diagonal form. However, we cannot control the arrangement of the invariant factors. From these constructions, we may conclude that the matrix order inequality
 \begin{multline*}  \left\| \begin{array}{c|c} \left(\overline{ \la_{n-j+1}^{B}}\right)^{(1)} &  \left(\overline{ \n_{j-i}^{A}} \right)^{(1)}  \\ \hline 0 & \left( \overline{\n_{j-i}^{A}}\right)^{(2)} \\ \hline
 0 & 0 \end{array} \right\|
 -  \left\| \begin{array}{c|c} \left(\overline{ \la_{n-j+1}^{B}}\right)^{(1)} &  \left(\overline{ \n_{j-i-1}^{A}} \right)^{(1)}  \\ \hline 0 & \left( \overline{\n_{j-i-1}^{A}}\right)^{(2)} \\ \hline
 0 & 0 \end{array} \right\|  \leq \\
  \left\| \begin{array}{c|c}  \left(\overline{ \la_{n-j}^{B}}\right)^{(1)} &  \left( \overline{\n_{j-i+1}^{C}}\right)^{(1)}  \\ \hline 0 & \left( \overline{\n_{j-i+1}^{C}}\right)^{(2)} \\ \hline
 0 &  \left( \overline{\n_{j-i+1}^{C}}\right)^{(3)} \end{array} \right\|
 -  \left\| \begin{array}{c|c}   \left(\overline{ \la_{n-j}^{B}}\right)^{(1)} & \left( \overline{\n_{j-i}^{C}}\right)^{(1)} \\ \hline 0 & \left( \overline{\n_{j-i}^{C}}\right)^{(2)} \\ \hline
 0 &  \left( \overline{\n_{j-i}^{C}}\right)^{(3)}\end{array} \right\| \end{multline*}
 reduces to proving

 \begin{equation} \left\|  \left( \overline{\n_{j-i}^{A}}\right)^{(2)} \right\| - \left\|  \left( \overline{\n_{j-i-1}^{A}}\right)^{(2)} \right\| \leq
 \left\| \left( \overline{\n_{j-i+1}^{C}}\right)^{(2)} \right\| - \left\|  \left( \overline{\n_{j-i}^{C}}\right)^{(2)} \right\|. \label{last} \end{equation}
 Note that in this case, by Equations~\ref{condition A}, the columns of $ \left( \overline{\n_{j-i-1}^{A}}\right)^{(2)} $ are in the span of the columns of $\left( \overline{\n_{j-i}^{A}}\right)^{(2)}$ (and similarly the columns of $\left( \overline{\n_{j-i}^{C}}\right)^{(2)}$ are in the span of $ \left( \overline{\n_{j-i+1}^{C}}\right)^{(2)}$ by Equation~\ref{condition C}). Since the matrix
  \[  \left\| \begin{array}{c|c} \left(\overline{ \la_{n-j+1}^{B}}\right)^{(1)} &  \left(\overline{ \n_{j-i-1}^{A}} \right)^{(1)}  \\ \hline 0 & \left( \overline{\n_{j-i-1}^{A}}\right)^{(2)} \\ \hline
 0 & 0 \end{array} \right\|  \]
 must be such that the resulting order is \emph{minimal}, the block $\left( \overline{\n_{j-i-1}^{A}}\right)^{(2)}$ must have minimal order among all rank $(j-i-1)$ submodules in the span of $\left( \overline{\n_{j-i}^{A}}\right)^{(2)}$. This implies
 \begin{equation}  \left( \overline{\n_{j-i-1}^{A}}\right)^{(2)}  = \left( \overline{\n_{j-i}^{A}}\right)^{(2)}\!|^{2}_{j-i}, \label{block A} \end{equation}
 and by the same reasoning
 \begin{equation} \left( \overline{\n_{j-i}^{C}}\right)^{(2)} = \left( \overline{\n_{j-i+1}^{C}}\right)^{(2)}\!|^{2}_{j-i+1}. \label{block C} \end{equation}

 Thus, we will argue that Inequality~\ref{last} holds where the following conditions have been established:
 \begin{enumerate}
 \item The left-hand side is the largest invariant factor of $ \left( \overline{\n_{j-i}^{A}}\right)^{(2)} $ (by Equation~\ref{block A}).
 \item The right-hand side is the largest invariant factor of  $\left( \overline{\n_{j-i+1}^{C}}\right)^{(2)} $ (by Equation~\ref{block C}).
 \item The block $ \left( \overline{\n_{j-i}^{A}}\right)^{(2)} $ is diagonal, and the orders of the entries are the invariant factors of the block, arranged in decreasing order.
 \item The block $\left( \overline{\n_{j-i+1}^{C}}\right)^{(2)} $ is also diagonal, but the arrangement of the orders of the entries is not (yet) determined.
  \item The rows below the block $ \left( \overline{\n_{j-i}^{A}}\right)^{(2)} $ are all zero.
 \item The bottom rows of both the blocks $ \left( \overline{\n_{j-i}^{A}}\right)^{(2)} $ and  $\left( \overline{\n_{j-i+1}^{C}}\right)^{(2)} $ lie in the same row of the matrices from which they come.

 \end{enumerate}

 By the first two statements above, we will have proved the right-leaning rhombus inequality if we can (finally) conclude that the largest invariant factor of the block $ \left( \overline{\n_{j-i}^{A}}\right)^{(2)} $ is less than or equal to the largest invariant factor of the block $\left( \overline{\n_{j-i+1}^{C}}\right)^{(2)} $.

We claim that the orders of the diagonal entries of  $\left( \overline{\n_{j-i+1}^{C}}\right)^{(2)} $ must be the \emph{same} as those in the corresponding rows of $ \left( \overline{\n_{j-i}^{A}}\right)^{(2)} $. This follows since if an entry in some row $k$ of one of the blocks had a \emph{lower} order than and entry in the same row of the other block (starting from the right-most column and working left), we could replace the column of the larger order with that of the smaller, and in the matrix forms we have here obtained, the resulting order of the matrix pairs would necessarily \emph{decrease}, which would contradict that we had achieved the minimum possible order already.

Thus, $(j-i)$ out of the $(j-i+1)$ many invariant factors of $\left( \overline{\n_{j-i+1}^{C}}\right)^{(2)} $ are precisely the invariant factors of the block $ \left( \overline{\n_{j-i}^{A}}\right)^{(2)} $. From this we conclude that the largest invariant factor of $ \left( \overline{\n_{j-i}^{A}}\right)^{(2)} $ cannot exceed the largest invariant factor of $\left( \overline{\n_{j-i+1}^{C}}\right)^{(2)} $, and the inequality is proved.

\bigskip
\bigskip

\noindent \underline{Vertical Rhombus Inequality for the $\{ h_{ij} \}$.}

\medskip

Here we wish to prove our formula
\[ h_{st} =  \  | \lambda | - \min_{\substack{ \Lambda_{n-t} \oplus {\scriptstyle{\cal N }_{t-s}}}} \!\!\big(  \left\| \la_{n-t}+ \n_{t-s} \right\| \big) \]
satisfies:
\[ h_{ij} + h_{(i+1)j} \leq h_{(i+1)(j+1)} + h_{i(j-1)}. \]
Let us express Equation~\eqref{min formula}, written in terms of modules realizing the minima appearing in it:

\begin{multline}  \left\| \la_{n-j}^{(ij)}\oplus \n_{j-i}^{(ij)} \right\| - \left\| \la_{n-j-1}^{(i+1)(j+1)}\oplus \n_{j-i}^{(i+1)(j+1)} \right\| \leq  \\
  \left\| \la_{n-j+1}^{i(j-1)}\oplus \n_{j-i-1}^{i(j-1)} \right\| -  \left\| \la_{n-j}^{(i+1)j}\oplus \n_{j-i-1}^{(i+1)j} \right\| . \end{multline}

Let us, in fact, swap the order of the summands in the above:

\begin{multline}  \left\| \n_{j-i}^{(ij)}  \oplus  \la_{n-j}^{(ij)} \right\| - \left\|  \n_{j-i}^{(i+1)(j+1)}  \oplus\la_{n-j-1}^{(i+1)(j+1)}\right\| \leq  \\
  \left\| \n_{j-i-1}^{i(j-1)}  \oplus \la_{n-j+1}^{i(j-1)}\right\| -  \left\| \n_{j-i-1}^{(i+1)j} \oplus \la_{n-j}^{(i+1)j} \right\| . \end{multline}
Let us compare the above to Inequality~\ref{main} from the right-leaning case above:
\begin{multline}
 \left\| \la_{n-j+1}^{(i-1,j-1)}  \oplus  \n_{j-i}^{(i-1,j-1)} \right\|
 - \left\| \la_{n-j+1}^{(i,j-1)}  \oplus  \n_{j-i-1}^{(i,j-1)} \right\|
 \leq \\
 \left\| \la_{n-j}^{(i-1,j)}  \oplus  \n_{j-i+1}^{(i-1,j)} \right\|
  -\left\| \la_{n-j}^{(ij)}  \oplus  \n_{j-i}^{(ij)} \right\|  \end{multline}

 We see in both cases that the pattern of ranks (by abuse of notation) is:
 \[ \| s \oplus k \| - \| s \oplus (k-1) \| \leq \| (s-1) \oplus (k+1) \| - \| (s-1) \oplus k \|. \]
 As such, we may argue exactly as in the right-leaning case (which only depended on the relative sizes of these ranks), and conclude the vertical rhombus inequality holds as well.

\bigskip

\bigskip
 \noindent \underline{Left-Leaning Rhombus Inequality for the $\{ h_{ij} \}$.}

\medskip

Here we wish to prove our formula given by Equation~\eqref{min formula}:
\[ h_{st} =   \  | \lambda | - \min_{ \Lambda_{n-j} \oplus {\scriptstyle{\cal N}_{j-i}}} \!\!\big(  \left\| \la_{n-j}  +  \n_{j-i} \right\| \big) \]
satisfies:
\[ h_{ij} + h_{i(j-1)} \geq h_{(i-1)(j-1)} + h_{(i+1)j}. \]

Let us use Equation~\eqref{min formula}, written in terms of modules realizing the minima appearing in it, simplifying after subtracting the terms $| \lambda |$ appearing on both sides:

\begin{multline}
  \left\| \la_{n-j+1}^{(i,j-1)}  \oplus  \n_{j-i-1}^{(i,j-1)} \right\| -
   \left\| \la_{n-j}^{(i+1,j)}  \oplus  \n_{j-i-1}^{(i+1,j)} \right\|
 \leq \\
  \left\| \la_{n-j+1}^{(i-1,j-1)}  \oplus  \n_{j-i}^{(i-1,j-1)} \right\|-
 \left\| \la_{n-j}^{(ij)}  \oplus  \n_{j-i}^{(ij)} \right\|.
  \end{multline}

 To emphasize similarity to earlier cases, let us swap the order of the summands in the above:

\begin{multline}
  \left\|  \n_{j-i-1}^{(i,j-1)}  \oplus \la_{n-j+1}^{(i,j-1)}  \right\| -
   \left\|  \n_{j-i-1}^{(i+1,j)}  \oplus  \la_{n-j}^{(i+1,j)}  \right\|
 \leq \\
  \left\| \n_{j-i}^{(i-1,j-1)}  \oplus   \la_{n-j+1}^{(i-1,j-1)} \right\|-
 \left\|  \n_{j-i}^{(ij)}  \oplus  \la_{n-j}^{(ij)} \right\|.
  \end{multline}
  At this point we can repeat the initial constructions as in the right-leaning case, replacing the submodules above with certain aligned submodules:

\[
  \left\|  \n_{j-i-1}^{(B)}  \oplus \la_{n-j+1}^{(A)}  \right\| -
   \left\|  \n_{j-i-1}^{(B)}  \oplus  \la_{n-j}^{(A)}  \right\|
 \leq
  \left\| \n_{j-i}^{(B)}  \oplus   \la_{n-j+1}^{(C)} \right\|-
 \left\|  \n_{j-i}^{(B)}  \oplus  \la_{n-j}^{(C)} \right\|.
\]
where, if two submodules have the same letter superscript, either they equal (if their ranks are the same), or the ranks differ by $1$, in which case the submodule of smaller rank is spanned by an invariant adapted basis of the submodule of larger rank, where the generators correspond to all but the largest invariant factor. Thus, as in the right-leaning case, we may write the above matricially (after suitable row and column operations):

 \begin{multline*}  \left\| \begin{array}{c|c}\left( \overline{\n_{j-i-1}^{B}}\right)^{(1)}  &  \left(\overline{ \n_{j-i+1}^{A}} \right)^{(1)}  \\ \hline 0 & \left(\overline{ \la_{n-j+1}^{A}}\right)^{(2)} \\ \hline
 0 & 0 \end{array} \right\|
 -  \left\| \begin{array}{c|c} \left( \overline{\n_{j-i-1}^{B}}\right)^{(1)}&  \left(\overline{ \n_{j-i}^{A}} \right)^{(1)}  \\ \hline 0 &  \left(\overline{ \la_{n-j}^{A}}\right)^{(2)} \\ \hline
 0 & 0 \end{array} \right\|  \leq \\
  \left\| \begin{array}{c|c} \left( \overline{\n_{j-i}^{B}}\right)^{(1)}  &  \left(\overline{ \la_{n-j+1}^{C}}\right)^{(1)}  \\ \hline 0 & \left(\overline{ \la_{n-j+1}^{C}}\right)^{(2)} \\ \hline
 0 & \left(\overline{ \la_{n-j+1}^{C}}\right)^{(3)} \end{array} \right\|
 -  \left\| \begin{array}{c|c}   \left( \overline{\n_{j-i}^{B}}\right)^{(1)}  &\left(\overline{ \la_{n-j}^{C}}\right)^{(1)} \\ \hline 0 &\left(\overline{ \la_{n-j}^{C}}\right)^{(2)}  \\ \hline
 0 &  \left(\overline{ \la_{n-j}^{C}}\right)^{(3)} \end{array} \right\|, \end{multline*}
We may repeat the constructions of the right-leaning rhombus inequality to the left-leaning case here, and may assume:
 \begin{enumerate}
 \item The left-hand side is the largest invariant factor of  $\left(\overline{ \la_{n-j+1}^{A}}\right)^{(2)}  $.
 \item The right-hand side is the largest invariant factor of  $ \left(\overline{ \la_{n-j+1}^{C}}\right)^{(2)}  $.
 \item The block $\left(\overline{ \la_{n-j+1}^{A}}\right)^{(2)}  $ is diagonal, and the orders of the entries are the invariant factors of the block, arranged in \emph{increasing} order down the diagonal (this is unlike the case for the proof of the right-leaning rhombus inequality).
 \item The block $ \left(\overline{ \la_{n-j+1}^{C}}\right)^{(2)}  $ is also diagonal, but the arrangement of the orders of the entries is not (yet) determined.
  \item The rows below the block $ \left( \overline{\n_{j-i}^{A}}\right)^{(2)} $ are all zero.
 \end{enumerate}
 However, the sixth condition in the right-leaning rhombus inequality fails. Namely, the blocks $\left(\overline{ \la_{n-j+1}^{A}}\right)^{(2)}  $ and  $ \left(\overline{ \la_{n-j+1}^{C}}\right)^{(2)}  $ do \emph{not} lie along the same row (and, in this case, both are of the same rank). However, what we do see is that in the proposed matricial inequality:

  \begin{multline*}  \left\| \begin{array}{c|c}\left( \overline{\n_{j-i-1}^{B}}\right)^{(1)}  &  \left(\overline{ \n_{j-i+1}^{A}} \right)^{(1)}  \\ \hline 0 & \left(\overline{ \la_{n-j+1}^{A}}\right)^{(2)} \\ \hline
 0 & 0 \end{array} \right\|
 -  \left\| \begin{array}{c|c} \left( \overline{\n_{j-i-1}^{B}}\right)^{(1)}&  \left(\overline{ \n_{j-i}^{A}} \right)^{(1)}  \\ \hline 0 &  \left(\overline{ \la_{n-j}^{A}}\right)^{(2)} \\ \hline
 0 & 0 \end{array} \right\|  \leq \\
  \left\| \begin{array}{c|c} \left( \overline{\n_{j-i}^{B}}\right)^{(1)}  &  \left(\overline{ \la_{n-j+1}^{C}}\right)^{(1)}  \\ \hline 0 & \left(\overline{ \la_{n-j+1}^{C}}\right)^{(2)} \\ \hline
 0 & \left(\overline{ \la_{n-j+1}^{C}}\right)^{(3)} \end{array} \right\|
 -  \left\| \begin{array}{c|c}   \left( \overline{\n_{j-i}^{B}}\right)^{(1)}  &\left(\overline{ \la_{n-j}^{C}}\right)^{(1)} \\ \hline 0 &\left(\overline{ \la_{n-j}^{C}}\right)^{(2)}  \\ \hline
 0 &  \left(\overline{ \la_{n-j}^{C}}\right)^{(3)} \end{array} \right\|, \end{multline*}
 The top of the $(n-j+1) \times (n-j+1)$ block $ \left(\overline{ \la_{n-j+1}^{A}}\right)^{(2)}$ lies \emph{one row higher} in its corresponding matrix than the block $ \left(\overline{ \la_{n-j+1}^{C}}\right)^{(2)}$ (of the same size) lies in its matrix. That is, the pair of blocks (arranged so corresponding rows are at the same height) would appear as:

  \[ \begin{array}{ccc}
  \begin{array}{c}
  \begin{array}{|cccc|} \hline
  t^{\alpha_{n-j+1}} &&& \\ \hline
   & t^{\alpha_{n-j}} && \\
   && \ddots & \\
   &&&t^{\alpha_{1}} \end{array} \\ \hline
   \ \ \ \ \  \\ \\
    \left(\overline{ \la_{n-j+1}^{A}}\right)^{(2)}  \end{array} & \ \ \ \ \ &
   \begin{array}{c}
   \ \ \ \ \ \\ \hline
   \begin{array}{|cccc|}
   t^{\beta_{n-j+1}} &&& \\
   & \ddots && \\
   && t^{\beta_{2}} & \\ \hline
   &&& t^{\beta_{1}} \end{array}\\ \hline \\
    \left(\overline{ \la_{n-j+1}^{C}}\right)^{(2)} \end{array}
   \end{array} \]

   \medskip

  Recall that the orders of the invariant factors $t^{\beta_{n-j+1}}, \ldots , t^{\beta_{1}}$ are not assumed to be in any particular order. Then, arguing as we did for the right-leaning rhombus inequality, we see that in fact, we must have $\alpha_{k} = \beta_{k+1}$ for $k = 1 \dots (n-j)$. In particular, the largest invariant factor of $  \left(\overline{ \la_{n-j+1}^{A}}\right)^{(2)}$, namely $\alpha_{1}$, cannot exceed the largest invariant factor of $\left(\overline{ \la_{n-j+1}^{C}}\right)^{(2)} $ since it must actually be among the invariant factors of $\left(\overline{ \la_{n-j+1}^{C}}\right)^{(2)} $.

  \medskip

  With this, the last inequality has been verified, and the proof is complete.
 \end{proof}

\section{Computing Types for Hives}
By Theorem~\ref{rhombus} we see that the values for $\{h_{st} \}$ determined by the formula in Equation~\ref{min formula} do determine a hive. What remains left to prove for Theorem~\ref{full theorem} is that the hive we have produced has the correct type, and also to prove the alternate formulas given by maxima of orders of various blocks.

\begin{lem}
Let $\la$ and $\la$ two full $\oi$-lattices in $\Gr$, and let ${\cal M} \in \Gr$ be determined by the condition that $(I,{\cal M})$ and $(\n, \la)$ are in the same $GL_{n}(\K)$ orbit. Fix a a common matrix identification of $\la, \n$ and ${\cal M}$. Let $U_{t}$ denote an arbitrary $\oi$-submodule of $K^n$ of rank $t$.

   Then
\[ \| \overline{\la(U_{t}) }\| = \| \overline{\n(U_t) }\| + \|\overline{ \m(U_t)} \|. \] \label{lam=mu+nu}
\end{lem}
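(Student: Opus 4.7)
The plan is to establish the identity by exploiting the matrix factorization $\overline{\la} = \overline{\n} \cdot \overline{\m}$ that is guaranteed by the common matrix identification of the three lattices. Writing $\overline{\la(U_t)} = \overline{\la}\cdot \overline{U_t} = \overline{\n}\cdot \overline{\m}\cdot\overline{U_t} = \overline{\n}\cdot\overline{\m(U_t)}$, the claim reduces to showing that applying $\overline{\n}$ to the rank-$t$ submodule $\overline{\m(U_t)}$ augments its norm by exactly the quantity $\|\overline{\n(U_t)}\|$, rather than by the full quantity $|\nu|$.

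First I would bring $\overline{\n}$ into Smith normal form, writing $\overline{\n} = P D Q$ with $P, Q \in GL_n(\oi)$ and $D$ the diagonal matrix with diagonal entries $t^{\nu_1}, \ldots, t^{\nu_n}$. Since left multiplication by elements of $GL_n(\oi)$ preserves the invariant factors appearing in all three terms, we may replace $\overline{\n}$ by $D Q$ and analyze $D Q \cdot \overline{\m(U_t)}$ via its $t \times t$ minors. The Cauchy--Binet formula expresses each such minor as a sum over $t$-element row subsets of products of $t \times t$ minors of $D$ with $t \times t$ minors of $Q \cdot \overline{\m(U_t)}$; the minors of $D$ are simply monomials $t^{\nu_{i_1} + \cdots + \nu_{i_t}}$ indexed by the chosen rows.

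Second, I would carry out the parallel computation for $\|\overline{\n(U_t)}\| = \|D Q \overline{U_t}\|$ via its $t \times t$ minors, and then compare. The multiplicativity of the $t \times t$ minor chosen by a given row subset under the product $\overline{\la(U_t)} = \overline{\n}\cdot\overline{\m(U_t)}$ yields, term by term, an identity that suggests the desired additive relation once one identifies which row subset realizes the minimum order. To organize the comparison, I would invoke Lemma~\ref{normal} to simultaneously place $\overline{U_t}$, $\overline{\n(U_t)}$, $\overline{\m(U_t)}$, and $\overline{\la(U_t)}$ in compatible normal form, so that the minimizing row subset for each norm is visible from a common block decomposition.

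The main obstacle will be controlling the possible cancellations in the Cauchy--Binet sums: different row subsets may achieve the minimum order in the three expansions, and one must verify that the minimum for $\overline{\la(U_t)}$ is realized by precisely the pair of subsets whose minors are minimal for $\overline{\n(U_t)}$ and $\overline{\m(U_t)}$ individually. I would handle this by a reduction to the case where $\overline{U_t}$ is invariant-adapted to $\overline{\m}$, using column operations in $GL_t(\oi)$ (which preserve all three norms). The symmetry of the statement under swapping $\n$ and $\m$ (combined with the appropriate reversal of the product) provides a useful consistency check throughout the argument.
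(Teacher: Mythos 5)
Your proposal correctly identifies the central obstacle---aligning the minimizing row subsets in the three Cauchy--Binet expansions---but this obstacle is fatal rather than merely technical, because the stated identity actually fails. Take $n=2$, $\overline{\m} = \left(\begin{smallmatrix} t & 1 \\ 0 & 1\end{smallmatrix}\right)$, $\overline{\n} = \left(\begin{smallmatrix} 1 & 0 \\ 0 & t\end{smallmatrix}\right)$, so $\overline{\la} = \overline{\n}\,\overline{\m} = \left(\begin{smallmatrix} t & 1 \\ 0 & t\end{smallmatrix}\right)$, and let $U_1$ be spanned by $(0,1)^{T}$. Then $\m(U_1)$ is spanned by $(1,1)^{T}$, so $\|\overline{\m(U_1)}\|=0$; $\n(U_1)$ is spanned by $(0,t)^{T}$, so $\|\overline{\n(U_1)}\|=1$; but $\la(U_1)$ is spanned by $(1,t)^{T}$, so $\|\overline{\la(U_1)}\| = 0 \neq 0+1$. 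Here the row subset $\{1\}$ realizes the minimum order for $\overline{\la(U_1)}$, yet the corresponding minor of $\overline{\n(U_1)}$ vanishes; the subset $\{2\}$ that realizes $\|\overline{\n(U_1)}\|$ contributes the unwanted factor of $t$. Exactly the misalignment you flag as the main obstacle is what occurs.

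The paper's own proof conceals the same gap. Its displayed equality $[\overline{\m(\vec{u}_1),\ldots,\m(\vec{u}_t)}] = [\overline{\vec{u}_1,\ldots,\vec{u}_t}]\cdot\mathrm{diag}(t^{\alpha_1},\ldots,t^{\alpha_t})$ asserts, column by column, that $\m(\vec{u}_i) = t^{\alpha_i}\vec{u}_i$, i.e.\ that $U_t$ admits a basis of $\m$-eigenvectors; this is not available in general. What a change of basis on $U_t$ actually provides is $\m(\vec{u}_i) = t^{\alpha_i}\vec{w}_i$, where $\{\vec{w}_i\}$ is the unit part of an invariant-adapted basis of $\m(U_t)$. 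That yields $\|\la(U_t)\| = \|\n(W_t)\| + \|\m(U_t)\|$ for $W_t = \langle\langle\vec{w}_1,\ldots,\vec{w}_t\rangle\rangle$, and in general $\|\n(W_t)\|\neq\|\n(U_t)\|$ (the example above gives $0$ versus $1$). Your plan of reducing ``to the case where $\overline{U_t}$ is invariant-adapted to $\overline{\m}$'' therefore cannot be made simultaneously compatible with the Smith normal form of $\overline{\n}$, and the cancellation control you anticipate as the crux is genuinely unavailable without further hypotheses on $U_t$ or a modification of the right-hand side of the identity.
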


\begin{proof}If $t = n$, the result is immediate by determinants. More generally,
let us choose a basis $\{ \vec{u}_{1}, \ldots, \vec{u}_{t} \}$ for $U_t$, and suppose $inv(\n(U_{t}))= (\alpha_{1}, \ldots , \alpha_{t})$. Then
\begin{align} [\overline{\la(U_{t})} ] &= [ \overline{\n \m(\vec{u}_{1}), \ldots , \n \m(\vec{u}_{t}) }] \\
& =\overline{ \n}[ \overline{\m(\vec{u}_{1}), \ldots , \m(\vec{u}_{t}) }]. \end{align}
We may assume in the above that $\{ \vec{u}_{1}, \ldots , \vec{u}_{t} \}$ is actually an invariant adapted basis for the submodule whose columns form $\overline{\m(U_{t})}$, so that
\begin{align} [ \overline{\m(\vec{u}_{1}), \ldots , \m(\vec{u}_{t}) }] & = [ \overline{t^{\alpha_{1}}\vec{u}_{1}, \ldots , t^{\alpha_{t}}\vec{u}_{t} }] \\
& = [ \overline{\vec{u}_{1}, \ldots , \vec{u}_{t} }] \cdot diag(t^{\alpha_{1}}, \ldots , t^{\alpha_t}). \end{align}
Consequently,
\begin{align*}
\| \overline{\la(U_{t}) }\|  & = \|  [ \overline{\n \m (\vec{u}_{1}), \ldots , \n \m (\vec{u}_{t}) }]  \| \\
& = \|\overline{ \n} [ \overline{\m (\vec{u}_{1}), \ldots , \m (\vec{u}_{t}) }] \| \\
& = \|\overline{ \n} [ \overline{\vec{u}_{1}, \ldots , \vec{u}_{t} }] \cdot diag(t^{\alpha_{1}}, \ldots , t^{\alpha_t}) \| \\
& = \|[ \overline{\n \vec{u}_{1}, \ldots , \n \vec{u}_{t} }] \| + \|\overline{ \m(U_{t})} \| \\
& = \| \overline{\n(U_t) }\| + \| \overline{\m(U_t)} \|. \end{align*}

\end{proof}

\begin{lem}Let $\Lambda$ and $N$ be two full $\oi$-lattices of $K^n$. Let $\Phi: K^n \rightarrow K^n$ be defined so that $\Phi(N) = \Lambda$, and let $\mu$ denote the invariant partition of ${\cal M} = \Phi(\oi^n)$. Let the invariant partition of $\la$ be $inv(\la) = (\lambda_{1} \geq \cdots \geq \lambda_{n})$, and then let $| \lambda | = \lambda_{1} + \cdots + \lambda_{n}$. Below, let $U_{s}$ denote a $\oi$-submodule of $K^n$ of rank $s$, and $V_{t}$ denote a $\oi$-submodule of rank $t$, etc..

   Then
 \begin{equation}
   | \lambda | - \ \min_{\substack{ \Lambda_{n-t} \oplus {\scriptstyle {\cal N}_{t-s}}}} \!\!\big(  \left\| \Lambda_{n-t}\oplus \n_{t-s} \right\| \big)
  = \max_{\substack{ \Lambda_{s} \oplus {\scriptstyle {\cal M}_{t-s}}}} \  \big(  \left\| \Lambda_{s} \oplus \m_{t-s} \right\| \big) \label{first eq}
     \end{equation}
\noindent and
\begin{equation}
  | \lambda | - \min_{\substack{ \Lambda_{n-t} \oplus {\scriptstyle {\cal M}_{t-s}}}} \!\!\big(  \left\| \Lambda_{n-t}\oplus \m_{t-s} \right\| \big) =\max_{\substack{ \Lambda_{s} \oplus {\scriptstyle {\cal N}_{t-s}}}} \  \big(  \left\| \Lambda_{s} \oplus \n_{t-s} \right\| \big)\label{2nd eq}
  \end{equation} \label{min=max}
  \end{lem}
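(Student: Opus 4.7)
The plan is to prove both equalities by setting up a correspondence, mediated by a common rank-$(t-s)$ submodule $V_{t-s} \subseteq \oi^n$, between max-side configurations $(\la(U_s), \m(V_{t-s}))$ and min-side configurations $(\la(U_{n-t}), \n(V_{t-s}))$, with $U_s, V_{t-s}, U_{n-t}$ forming a unimodular complementary triple inside $\oi^n$.  The key algebraic input is Lemma~\ref{lam=mu+nu}, specialized to the middle summand: $\|\la(V_{t-s})\| = \|\n(V_{t-s})\| + \|\m(V_{t-s})\|$.  This is what allows the single rank-$(t-s)$ factor to split its contribution across the two sides of the identity.

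First I would establish the following master identity: for any unimodular decomposition $\oi^n = U_s \oplus V_{t-s} \oplus U_{n-t}$ (equivalently, $[\overline{U_s}\,|\,\overline{V_{t-s}}\,|\,\overline{U_{n-t}}] \in GL_n(\oi)$),
\[ \|\la(U_s) \oplus \m(V_{t-s})\| + \|\la(U_{n-t}) \oplus \n(V_{t-s})\| \leq |\lambda|, \]
with equality at decompositions adapted to a common invariant-adapted basis.  The proof of this identity proceeds from the matrix factorization $\la = \n\m$ together with Lemma~\ref{normal}: the full-rank product $\overline{\la}\cdot [\overline{U_s}\,|\,\overline{V_{t-s}}\,|\,\overline{U_{n-t}}]$ has order $|\lambda|$, and putting it into block-triangular normal form yields diagonal blocks of sizes $s$, $(t-s)$, and $(n-t)$ whose orders sum to $|\lambda|$.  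The middle block is where the two sides overlap, and its decomposition into $\m$- and $\n$-contributions is precisely Lemma~\ref{lam=mu+nu} applied to $V_{t-s}$.

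Both equalities of Lemma~\ref{min=max} then follow.  For the direction $|\lambda| - \min \leq \max$ in the first equation, the minimum is realized (by Lemma~\ref{amalgam} applied with $\cA = \la$, $\cC = \n$) at some $(\la(U_{n-t}^*), \n(V_{t-s}^*))$ whose spanning submodules can be chosen adapted to a common invariant-adapted basis and hence extended to a unimodular decomposition $\oi^n = U_s^* \oplus V_{t-s}^* \oplus U_{n-t}^*$; the equality case of the master identity then produces a max-side configuration attaining $|\lambda| - \min$.  Conversely, every max-side configuration $(\la(U_s), \m(V_{t-s}))$ extends to a unimodular decomposition (since any two direct summands of the free $\oi$-module $\oi^n$ of disjoint ranks may be completed to a basis), and the master inequality yields $\max \leq |\lambda| - \min$.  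The second displayed equation is obtained from the first by interchanging $\m$ and $\n$, which is permitted by the symmetry of Lemma~\ref{lam=mu+nu} under this swap.

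Main obstacle: the central difficulty is the equality case of the master identity.  While the inequality $\sum \leq |\lambda|$ reflects the multiplicativity of determinants applied to blocks, identifying exactly when equality holds, and confirming that equality occurs at the minimizing (resp.\ maximizing) decompositions, requires the careful row-and-column tracking exhibited in the proof of the rhombus inequalities together with consistent use of Lemma~\ref{amalgam} to produce aligned bases.
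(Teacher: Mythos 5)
Your proposal adopts the same skeleton as the paper's proof: pick a unimodular decomposition $\oi^n = U_s \oplus V_{t-s} \oplus U_{n-t}$, put $\overline{\la}\cdot[\overline{V_{t-s}}|\overline{U_{n-t}}|\overline{U_s}]$ into block upper-triangular normal form (Lemma~\ref{normal}), and use Lemma~\ref{lam=mu+nu} to split the order of the first diagonal block, $\|\la(V_{t-s})\|$, into $\|\n(V_{t-s})\| + \|\m(V_{t-s})\|$. So the architecture is the same. But there are two genuine gaps.

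First, your ``master inequality'' $\|\la(U_s)\oplus\m(V_{t-s})\| + \|\la(U_{n-t})\oplus\n(V_{t-s})\| \leq |\lambda|$ for \emph{every} unimodular decomposition is asserted but not derived. The block-triangular normal form does give $|\lambda|$ as a sum of the orders of the three diagonal blocks, but that quantity is not the same as the left-hand side of your inequality: one must relate $\|\la(U_{n-t})\oplus\n(V_{t-s})\|$ to the middle diagonal block and $\|\la(U_s)\oplus\m(V_{t-s})\|$ to the last diagonal block. The paper does precisely this bookkeeping (its Equations~\eqref{nu-sance} and~\eqref{mu-sance}), and for the second of these it needs the non-trivial \emph{Claim} that the $(n-t)\times s$ block $(\overline{\la(Y_s)})^{(2)}$ can be taken to vanish --- which is proved only \emph{under the minimality hypothesis}, by a swap-column contradiction argument. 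So the content you are missing is exactly that the paper does not prove (and does not need) a universal inequality; it proves an \emph{equality} at the minimizer via the zero-block claim, then reads off the maximizer from it.

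Second, the justification for being able to extend the optimizing configurations to unimodular decompositions is not in place. Lemma~\ref{amalgam} does not say a minimizer can be realized by submodules forming part of an invariant-adapted basis; it says that minimizers for two different index pairs $(s,t)$ and $(s',t')$ can be \emph{nested} (one is a ``tail'' of the other). It gives no hold on direct-summand structure inside $\oi^n$. Similarly, in your last step the submodules $U_s, V_{t-s}$ giving a max-side configuration $\la(U_s)\oplus\m(V_{t-s})$ are not automatically direct summands of $\oi^n$; only the internal direct sum of $\la(U_s)$ and $\m(V_{t-s})$ inside $K^n$ is assumed, so completion to a unimodular frame is not automatic and requires justification (e.g., that the optimum is attained at saturated submodules). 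The paper sidesteps this entirely by working only from the minimizer side and never needing to complete a max-side configuration.
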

\begin{proof} We shall choose a matrix identification of the modules $\la$ and $\n$ such that any submodules, $\la_{t}$ and $\n_{t-s}$, for instance, may be realized by means of submodules $U_{n-t}, V_{t-s} \subseteq \oi^n$ such that $\overline{\la_{t}} = \overline{\la(U_{n-t})}$ and $\overline{\n_{t-s}} = \overline{\n(V_{t-s})}$.
Then
\[   | \lambda | - \min_{ \Lambda_{n-t} \oplus {\scriptstyle{\cal N}_{t-s}}} \!\!\big(  \left\| \Lambda_{n-t}\oplus \n_{t-s} \right\| \big)
=  |\lambda | -\min_{ U_{n-t} \oplus V_{t-s}} \!\!\left(\left\| \overline{\Lambda(U_{n-t})}| \overline{\n(V_{t-s}) } \right\| \right). \]

 For now, fix some choice of $U_{n-t}$ and $V_{t-s}$, and let us also choose a complementary rank $s$ submodule $Y_{s}$ so that $\oi^n = V_{t-s} \oplus U_{n-t} \oplus Y_{s}$. We consider the matrix $\la = \la(\oi^n)$ (under our matrix identification), written with respect to this decomposition:
  \[ [\overline{\la(U_{n-t})} | \overline{\la(V_{t-s})}|\overline{\la(Y_{s})}]  .
  \]
  By means of row operations, and column operations that preserve the splitting by direct sums above, we may express this matrix in the block form:
  \[  [\overline{\la(U_{n-t})} | \overline{\la(V_{t-s})}|\overline{\la(Y_{s})}]  = \left[ \begin{array}{c|c|c} \left(   \overline{\la(V_{t-s}) }\right)^{(1)} &  \left( \overline{\la(U_{n-t}) }\right)^{(1)}& \left( \overline{ \la(Y_{s})} \right)^{(1)} \\
  0 &  \left( \overline{\la(U_{n-t})} \right)^{(2)} & \left( \overline{ \la(Y_{s})} \right)^{(2)} \\
  0 & 0 & \left( \overline{ \la(Y_{s})} \right)^{(3)} \end{array} \right]. \]

\medskip
\noindent {\bf Claim:} If the submodules $U_{n-t}$ and $V_{t-s}$ are chosen such that
\[ \left\| \overline{\Lambda(U_{n-t})} | \overline{\n(V_{t-s}) } \right\| \]
is minimal, then after choosing the complementary submodule $Y_{s}$, we may assume the block decomposition
\begin{equation}    [\overline{\la(U_{n-t})} | \overline{\la(V_{t-s})}|\overline{\la(Y_{s})}]  = \left[ \begin{array}{c|c|c} \left(   \overline{\la(V_{t-s}) }\right)^{(1)} &  \left( \overline{\la(U_{n-t}) }\right)^{(1)}& \left( \overline{ \la(Y_{s})} \right)^{(1)} \\
  0 &  \left( \overline{\la(U_{n-t})} \right)^{(2)} & \left( \overline{ \la(Y_{s})} \right)^{(2)} \\
  0 & 0 & \left( \overline{ \la(Y_{s})} \right)^{(3)} \end{array} \right] \label{block}
  \end{equation}
actually has the form
\[  [\overline{\la(U_{n-t})} | \overline{\la(V_{t-s})}|\overline{\la(Y_{s})}]  = \left[ \begin{array}{c|c|c} \left(   \overline{\la(V_{t-s}) }\right)^{(1)} &  \left( \overline{\la(U_{n-t}) }\right)^{(1)}& \left( \overline{ \la(Y_{s})} \right)^{(1)} \\
  0 &  \left( \overline{\la(U_{n-t})} \right)^{(2)} &0 \\
  0 & 0 & \left( \overline{ \la(Y_{s})} \right)^{(3)} \end{array} \right]. \]

  That is, we may assume the $(n-t) \times s$ block $ \left(\overline{ \la(Y_{s})} \right)^{(2)}$ is a matrix of zeros.

  \medskip

  \noindent \emph{Proof of Claim:} Let us first, by Lemma~\ref{normal} assume that the matrix realization of Equation~\ref{block} above is in normal form. In particular, the blocks
$ \left(\la(U_{n-t}) \right)^{(2)}$ and $ \left( \la(Y_{s}) \right)^{(3)}$ are diagonal. We consider column operations that add to columns of $ \left( \la(Y_{s}) \right)^{(2)}$ a multiple of columns of $ \left(\la(U_{n-t}) \right)^{(2)}$, and also row operations that add to a row of $ \left( \la(Y_{s}) \right)^{(2)}$ a multiple of a row of $ \left( \la(Y_{s}) \right)^{(3)}$. We use such operations to ensure that any non-zero entry in $ \left( \la(Y_{s}) \right)^{(2)}$ (if it exists), must have order strictly \emph{less} than the sole non-zero entry to its left in $ \left(\la(U_{n-t}) \right)^{(2)}$, or sole non-zero element below it in $ \left( \la(Y_{s}) \right)^{(3)}$.

We argue that after this process, assuming $ \left\| \overline{\Lambda(U_{n-t})} | \overline{\n(V_{t-s}) } \right\|$ is of minimal possible order among appropriate submodules and ranks, that the block $ \left( \overline{ \la(Y_{s}) }\right)^{(2)}$ is all zeros. Suppose this was not the case, and there is some element$ \gamma_{ij} \neq 0$ appearing in row $i$ of  $ \left( \overline{ \la(U_{n-t})} \right)^{(2)}$ and column $j$ of $ \left(  \overline{ \la(Y_{s})} \right)^{(3)}$. We may assume that all non-zero elements of $ \left( \overline{  \la(Y_{s}) }\right)^{(2)}$ lying strictly below $ \gamma_{ij} $ have order at least $\| \gamma_{ij} \|$. As noted above, we must also have $\| \gamma_{ij} \|$ is \emph{strictly} less than both the order of the element in row $i$ of the diagonal matrix $\left( \overline{ \la(U_{n-t})} \right)^{(2)}$, and also the order of the element in column $j$ of $ \left(  \overline{ \la(Y_{s})} \right)^{(3)}$:

\[ \begin{array}{cc}
 \overline{ \la (U_{n-t})}^{(2)} &  \overline{ \la(V_{s})}^{(2)} \\
\left[ \begin{array}{ccccc}
t^{\beta_{1}} & 0 & \dots & \dots & 0\\
0 & \ddots & \ddots  & & \vdots \\
\vdots & & t^{\beta_{i}} & &\vdots \\
\vdots & & & \ddots & 0 \\
0 & \dots & \dots & 0 & t^{\beta_{n-t}} \end{array} \right] &
\left[ \begin{array}{ccccc}
\ \ \ \ \   \ \ \  &\ & \vdots & \  &\ \ \ \ \ \ \  \ \\
\  & \  & \vdots  & \ &\  \\
 & & \gamma_{ij} & & \\
 & & \vdots  & &  \\
&  & \vdots  & &  \end{array} \right]  \\
\left[ \begin{array}{ccccc}
\  0 \ \ \ & 0 & \dots & \dots & \ \ \  0\  \\
0 & \ddots & \ddots  & & \vdots \\
\vdots & & 0 & &\vdots \\
\vdots & & & \ddots & 0 \\
0 & \dots & \dots & 0 & 0 \end{array} \right] &
\left[ \begin{array}{ccccc}
t^{\alpha_{1}} & 0 & \dots & \dots & 0\\
0 & \ddots & \ddots  & & \vdots \\
\vdots & & t^{\alpha_{j}} & &\vdots \\
\vdots & & & \ddots & 0 \\
0 & \dots & \dots & 0 & t^{\alpha_{s}}
\end{array} \right] \\
 \overline{ \la (U_{n-t})}^{(3)} &  \overline{ \la (Y_{s})}^{(3)}
\end{array} \]

Suppose we now \emph{swap} column $j$ (containing the entry $\gamma_{ij}$ in row $i$), with the column of $ \overline{ \la(U_{n-t}) }$ containing the sole non-zero entry in row $i$ of $ \left( \overline{ \la(U_{n-t}) }\right)^{(2)}$.  Since the orders below $\gamma_{ij}$ are all at least of order $\| \gamma_{ij} \|$, we may use row operations to put this new version of $\left( \overline{ \la(U_{n-t})} \right)^{(2)}$ into upper triangular form. Call this new block $\left( \overline{ \la(U_{n-t})} \right)^{(2)\star}$.:

\[
 \begin{array}{c}
 (``\gamma_{ij}") \rightarrow  \overline{ \la (U_{n-t})}^{(2)}  \\
\left[ \begin{array}{ccccc}
t^{\beta_{1}} & 0 &\vdots & \dots & 0\\
0 & \ddots &\vdots   & & \vdots \\
\vdots & & \gamma_{ij} & &\vdots \\
\vdots & & \vdots & \ddots & 0 \\
0 & \dots & \vdots &\dots & t^{\beta_{n-t}} \end{array} \right]   \\
\left[ \begin{array}{ccccc}
\  0 \ \ \ & 0 & 0 & \dots & \ \ \  0\  \\
0 & \ddots & \vdots  & & \vdots \\
\vdots & & t^{\alpha_{j}}& &\vdots \\
\vdots & & & \ddots & 0 \\
0 & \dots & 0 &\dots  & 0 \end{array} \right]
\end{array} \Rightarrow
 \begin{array}{c}
 \overline{ \la (U_{n-t})}^{(2)\star} \\
\left[ \begin{array}{ccccc}
t^{\beta_{1}} & 0 &\vdots & \dots & 0\\
0 & \ddots &\vdots   & & \vdots \\
\vdots & & \gamma_{ij} & &\vdots \\
\vdots & &0 & \ddots & 0 \\
0 & \dots & \vdots &\dots & t^{\beta_{n-t}} \end{array} \right]   \\
\left[ \begin{array}{ccccc}
 \ 0\   \ \  & 0 & 0 & \dots & \ \  \  0\ \ \   \\
0 & \ddots & \vdots  & & \vdots \\
\vdots & & 0& &\vdots \\
\vdots & & & \ddots & 0 \\
0 & \dots & 0 &\dots  & 0 \end{array} \right]
\end{array}
\]

 so that the entry in row $i$ is strictly lower than before, resulting in
\[  \left\|  \overline{  \Lambda(U_{n-t})}^{\star}|  \overline{ \n(V_{t-s})}  \right\|  <  \left\|  \overline{ \Lambda(U_{n-t})} |  \overline{ \n(V_{t-s}) } \right\| , \]
(where $ \overline{ \Lambda(U_{n-t})}^{\star}$ is the matrix realization with the swapped column), contradicting minimality and proving the claim.

Consider also the block

\[ \left[ \begin{array}{c} \left(   \overline{ \la(V_{t-s})} \right)^{(1)} \\ 0 \\ 0 \end{array} \right] = \overline{  \n \m( V_{t-s} )}. \]
Choose an invariant adapted basis  $\{ t^{\kappa_{1}}\vec{v}_{1}, \ldots ,  t^{\kappa_{s}}\vec{v}_{t-s} \} $ for $\m\left[ V_{t-s} \right]$ so that
\begin{align*}
 \left[ \begin{array}{c} \left(  \overline{ \la(V_{t-s})} \right)^{(1)} \\ 0 \\ 0 \end{array} \right] & = \overline{  \n \m( V_{t-s} )} \\
 & = \overline{\n} \left[\overline{ \m}\vec{v}_{1}, \ldots , \overline{\m} \vec{v}_{t-s} \right] \\
 & =\overline{ \n} \left[ \vec{v}_{1} , \ldots , \vec{v}_{t-s} \right] \cdot diag(t^{\kappa_{1}}, \ldots, t^{\kappa_{t-s}}) \end{align*}
 where $\kappa_{1}, \ldots , \kappa_{t-s}$ are the invariant factors of $\overline{\m (V_{t-s})}$. Let $\Delta =  diag(t^{\kappa_{1}}, \ldots, t^{\kappa_{t-s}}) $, so that, in particular,
 \[ \overline{ \n (V_{t-s}) }= \overline{\n} \left[ \vec{v}_{1} , \ldots , \vec{v}_{t-s} \right] = \left[ \begin{array}{c} \left(  \overline{\la(V_{t-s})} \right)^{(1)} \cdot \Delta^{-1} \\ 0 \\ 0 \end{array} \right]  . \]
 Thus we have
 \begin{align}
 \left\| \overline{\n (V_{t-s})} \right\| + \left\| \left(\overline{\la(U_{n-t})} \right)^{(2)} \right\|  & = \left\|  \begin{array}{c|c} \left(  \overline{\la(V_{t-s})} \right)^{(1)} \cdot \Delta^{-1} &\left(\overline{\la(U_{n-t})} \right)^{(1)}\\
 0 & \left(\overline{\la(U_{n-t})} \right)^{(2)}  \\
 0 & 0
 \end{array}
  \right\| \\
  &= \left\|\overline{ \n(V_{t-s})} | \overline{\la(U_{n-t})} \right\|. \label{nu-sance}
   \end{align}

 A similar argument establishes
 \begin{equation} \left\|\overline{ \m(V_{t-s})} \right\| + \left\|  \left( \overline{\la(Y_{s})} \right)^{(3)}  \right\| =  \left\|  \overline{\m(V_{t-s})} |  \overline{\la(Y_{s})} \right\| . \label{mu-sance} \end{equation}

We may then argue:
\begin{align*} | \lambda | & = \left\|  \begin{array}{c|c|c} \left(   \overline{\la(V_{t-s}) }\right)^{(1)} &  \left( \overline{\la(U_{n-t}) }\right)^{(1)}& \left( \overline{ \la(Y_{s})} \right)^{(1)} \\
  0 &  \left( \overline{\la(U_{n-t})} \right)^{(2)} &0 \\
  0 & 0 & \left( \overline{ \la(Y_{s})} \right)^{(3)} \end{array}  \right\| \\
  & = \left\|  \left(   \overline{\la(V_{t-s}) }\right)^{(1)} \right\| + \left\| \left( \overline{\la(U_{n-t})} \right)^{(2)}\right\| + \left\|  \left( \overline{ \la(Y_{s})} \right)^{(3)}  \right\| \\
  & = \left\| \overline{ \la(V_{t-s})} \right\| +\left\|  \left(  \overline{\la(U_{n-t})} \right)^{(2)} \right\| + \left\|  \left(  \overline{\la(Y_{s})} \right)^{(3)}  \right\| \\
  & = \left\|  \overline{\m(V_{t-s})} \right\| + \left\|  \overline{\n (V_{t-s})} \right\| + \left\| \left( \overline{ \la(U_{n-t})} \right)^{(2)} \right\| + \left\|  \left( \overline{ \la(Y_{s})} \right)^{(3)}  \right\|, \  \hbox{by Lemma~\ref{lam=mu+nu}}\\
  & =  \left\| \overline{\n (V_{t-s})}\right\| + \left\| \left(\overline{\la(U_{n-t})} \right)^{(2)} \right\| + \left\| \overline{\m(V_{t-s})} \right\| + \left\|  \left(\overline{ \la(Y_{s})} \right)^{(3)}  \right\|  \\
  & = \left\| \overline{\n(V_{t-s})} | \overline{\la(U_{n-t})} \right\| + \left\| \overline{  \m(V_{t-s})} |  \overline{\la(Y_{s})} \right\|, \ \ \hbox{by Equations~\ref{nu-sance} and~\ref{mu-sance}.} \end{align*}

Therefore, since
\[ | \lambda | =  \| \overline{ \la(U_{n-t}) }| \overline{ \n(V_{t-s})} \| + \| \overline{\m(V_{t-s})} | \overline{\la(Y_{s})}\|, \]
we must have the minimal value attained by any expression of the form
\[   \| \overline{ \la(U_{n-t})}  | \overline{\n(V_{t-s})} \|  \]
equals the maximal value of a corresponding expression
\[ \| \overline{\m(V_{t-s})}| \overline{\la(Y_{s})} \|. \]
From this Equation~\eqref{first eq} is proved. Equation~\eqref{2nd eq} is proved analogously.
\end{proof}

\begin{thm}[Theorem~\ref{main theorem} above]
    Let $\Lambda$ and $N$ two full $\oi$-lattices of $K^n$. Let the invariant partition of $N$ be $inv(N) = \nu = (\nu_{1} , \ldots , \nu_{n})$ and $inv(\Lambda) = \lambda = (\lambda_{1} , \ldots , \lambda_{n})$. Let $\Phi: K^n \rightarrow K^n$ be defined so that $\Phi(N) = \Lambda$, and then let $\mu$ denote the invariant partition of ${\cal M} = \Phi(\oi^n)$. Let $| \lambda | = \lambda_{1} + \cdots + \lambda_{n}$. Below, let $U_{s}$ denote a $\oi$-submodule of $K^n$ of rank $s$, and $V_{t}$ denote a $\oi$-submodule of rank $t$, etc..

   Then setting
\begin{align}   h_{st} &=  \  | \lambda | - \min_{ \Lambda_{n-t} \oplus {\scriptstyle {\cal N}_{t-s}}} \!\!\big(  \left\| \Lambda_{n-t}+ N_{t-s} \right\| \big)\label{h min} \\
   & = \qquad \max_{\Lambda_{s} \oplus  {\cal M}_{t-s}} \!\!\left( \left\|  \Lambda_{s} \oplus  {\cal M}_{t-s} \right\| \right) \label{h max}
     \end{align}
     forms a hive of type $(\mu,\nu,\lambda)$, and

\begin{align}   h_{st'} &=  \  | \lambda | - \min_{\substack{ \Lambda_{n-t} \oplus {\cal M}_{t-s}}} \!\!\big(  \left\| \Lambda_{n-t}+ {\cal M}_{t-s} \right\| \big) \label{h' min}\\
   & = \qquad \max_{\Lambda_{s} \oplus  {\scriptstyle {\cal N}_{t-s}}} \!\!\left( \left\|  \Lambda_{s} \oplus  \n_{t-s} \right\| \right) \label{h' max}
     \end{align}
  forms a hive of type $(\nu,\mu,\lambda)$.

\end{thm}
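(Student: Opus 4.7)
The plan is to observe that the substantive content of the theorem has already been established, so only the computation of the types remains. By Lemma~\ref{min=max}, the equalities between~\eqref{mu,nu,lambda min} and~\eqref{mu,nu,lambda max}, and between~\eqref{nu,mu,lambda min} and~\eqref{nu,mu,lambda max}, hold. By Theorem~\ref{rhombus}, $\{h_{st}\}$ satisfies the rhombus inequalities of Definition~\ref{hive definition} and so is a hive. Since Lemma~\ref{amalgam}, on which that rhombus argument depends, is stated symmetrically in its two lattice arguments $\mathcal{A}, \mathcal{C}$, the identical proof of Theorem~\ref{rhombus} applied with $(\la,\m)$ in place of $(\la,\n)$ shows that $\{h_{st}'\}$ is also a hive.

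To determine types, I would use whichever formula (min or max) is more convenient on each of the three edges of the triangle. In each boundary case one of the two summands has rank zero and the extremum reduces to a single-lattice problem, for which Theorem~\ref{adapted basis theorem} supplies the answer: the maximum of $\|V_k\|$ over rank-$k$ submodules of a lattice with invariants $(\alpha_1 \geq \cdots \geq \alpha_n)$ is $\alpha_1+\cdots+\alpha_k$, and the minimum is $\alpha_{n-k+1}+\cdots+\alpha_n$. Specifically, for $\{h_{st}\}$: along the left edge ($s=0$), Equation~\eqref{mu,nu,lambda max} gives $h_{0,t}=\max_{\m_t\subseteq\m}\|\m_t\|=\mu_1+\cdots+\mu_t$, so the downward differences are the $\mu_t$. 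Along the right edge ($s=t$), the max formula gives $h_{tt}=\max_{\la_t\subseteq\la}\|\la_t\|=\lambda_1+\cdots+\lambda_t$, producing $\lambda$. Along the bottom edge ($t=n$), Equation~\eqref{mu,nu,lambda min} reduces to $h_{s,n}=|\lambda|-\min_{\n_{n-s}\subseteq\n}\|\n_{n-s}\|=|\lambda|-(\nu_{s+1}+\cdots+\nu_n)$, whose rightward differences are the $\nu_s$. Thus $\{h_{st}\}$ has type $(\mu,\nu,\lambda)$.

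The analogous computation for $\{h_{st}'\}$ is identical after swapping $\n$ and $\m$: the left edge yields $\nu_1+\cdots+\nu_t$ from~\eqref{nu,mu,lambda max}, the right edge again yields $\lambda_1+\cdots+\lambda_t$, and the bottom edge (using~\eqref{nu,mu,lambda min}) yields the $\mu_i$, so $\{h_{st}'\}$ has type $(\nu,\mu,\lambda)$. The real difficulty of the theorem was already absorbed into Theorem~\ref{rhombus} and Lemma~\ref{min=max}; what remains here is a direct boundary verification using only Theorem~\ref{adapted basis theorem}, with no further interaction between the two lattices since one of the two summands degenerates to $\{0\}$ at each edge.
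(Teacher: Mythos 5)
Your proposal is correct and takes essentially the same route as the paper: invoke Theorem~\ref{rhombus} (together with the symmetry of Lemma~\ref{amalgam}, which the paper notes in the remark preceding that lemma) for the rhombus inequalities, invoke Lemma~\ref{min=max} for the min/max equality, and then read off the three edge types by specializing $s=0$, $s=t$, and $t=n$ to a single-lattice extremum. Your bottom-edge computation $h_{s,n} = |\lambda| - (\nu_{s+1}+\cdots+\nu_n)$ is a minor cosmetic shortcut compared to the paper's $|\mu| + \nu_1+\cdots+\nu_s$ (they agree since $|\lambda|=|\mu|+|\nu|$), and your explicit remark about applying Theorem~\ref{rhombus} with $(\la,\m)$ in place of $(\la,\n)$ makes a step that the paper leaves implicit; otherwise the arguments coincide.
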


\begin{proof} By Theorem~\ref{min formula} we see that Equations~\eqref{h min} and~\eqref{h' min} do indeed define hives, and by Lemma~\ref{min=max} we may express this hive in the alternate form of Equations~\eqref{h max} and~\eqref{h' max}. What is left is to show that these hives have the types given above.

\medskip

To prove the set $\{ h_{ij} \}$ given by Equations~\eqref{h min} and~\eqref{h max} determines a hive of type $(\mu, \nu,\lambda)$, we start along the left edge.
Along the left edge of the hive all entries have the form $h_{0k}$. Usinq Equation~\eqref{h max} we see
\begin{align*} h_{0k} & = \max_{\Lambda_{0} \oplus  {\cal M}_{k}} \!\!\left( \left\|  \Lambda_{0} \oplus  {\cal M}_{k} \right\| \right) \\
& =\max_{{\cal M}_{k}} \left( \left\|   {\cal M}_{k} \right\| \right)\\
& = \mu_{1} + \cdots + \mu_{k} \end{align*}
and thus the partition defined along the left edge is $\mu$.

Along the right edge of the hive all entries have the form $h_{kk}$. Usinq Equation~\eqref{h max} we see
\begin{align*} h_{kk} & = \max_{\Lambda_{k} \oplus  {\cal M}_{0}} \!\!\left( \left\|  \Lambda_{k} \oplus  {\cal M}_{0} \right\| \right)  \\
& = \max_{\Lambda_{k} } \left( \left\|  \Lambda_{k} \right\| \right)\\
& = \lambda_{1} + \cdots + \lambda_{k}
\end{align*}
and thus the partition defined along the left edge is $\lambda$.
\bigskip

\noindent Along the bottom of the hive all entries have the form $h_{kn}$. Using Equation~\eqref{h min} we see
\begin{align*}  h_{kn} & =  \  | \lambda | - \min_{\substack{ \Lambda_{n-n} \oplus {\scriptstyle{\cal N}_{n-k}}}} \!\!\big(  \left\| \la_{n-n}+ \n_{n-k} \right\| \big) \\
& =  \  | \lambda | - \min_{\substack{  {\scriptstyle{\cal N}_{n-k}}}} \!\!\big(  \left\| \n_{n-k} \right\| \big) \\
&=\  | \mu |+|\nu| - \min_{\substack{  {\scriptstyle{\cal N}_{n-k}}}} \!\!\big(  \left\| \n_{n-k} \right\| \big) \\
& = | \mu| + \nu_{1} + \cdots + \nu_{k}
\end{align*}
and thus the partition defined along the bottom edge is $\nu$.

\bigskip

The proofs for establishing the type of the hive given by Equations~\eqref{h' min} and/or~\eqref{h' max} are proved in the same way. As above, the simplest arguments are found using Equation~\eqref{h' max} for the left and right sides of the hive (giving partitions $\nu$ and $\lambda$, respectively), while it is easiest to see the bottom edge gives the partition $\mu$ by using Equation~\eqref{h' min}.
\end{proof}

\section{Future Questions}
Our chief interest in establishing Theorem~\ref{full theorem} was in connecting our earlier work on Littlewood-Richardson fillings and linear algebra over valuation rings to recent questions in the study of the affine Grassmannian, and also the conjectured formula for hives from the work of Danilov and Koshevoy~\cite{DK}. Many questions and open problems remain.

In our earlier work, we were able to give, from a matrix pair $(M,N)$ over a certain valuation ring, a hive construction of \emph{both} types $(\mu,\nu, \lambda)$ and of $(\nu,\mu,\lambda)$. Further, we were able to show~\cite{lrreal} (by means of a rather delicate argument) that the bijection $c_{\mu \nu}^{\lambda} \leftrightarrow c_{\nu \mu}^{\lambda}$ we constructed matricially exactly matched the combinatorially defined bijection (as described by James and Kerber~\cite{J-K}) known previously. Our Theorem~\ref{full theorem} here seems likely to construct such a bijection between hives of type $(\mu,\nu,\lambda)$ and $(\nu,\mu,\lambda)$ and, indeed, to agree with our previous construction in ~\cite{lrreal}, at least over the rings for which that earlier construction applied. The proposed function would map a hive $\{ h_{st} \}$ of type $(\mu,\nu,\lambda)$ to a hive $\{ h_{st}' \}$ of type $(\nu,\mu,\lambda)$ provided there is a pair of $(\n, \la) \in \Gr \times \Gr$ of the appropriate type for which both hive constructions of Theorem~\ref{full theorem} applied to $(\n, \la)$ yield the hives $\{ h_{st} \}$ and $\{ h_{st}' \}$. That said, among the open problems remaining in this line of inquiry would be to establish:
\begin{enumerate}
\item That the map from $(\n, \la) \in \Gr \times \Gr$ to hives (of either type) is onto.
\item That if two pairs $(\n, \la)$ and $(\n', \la')$ both yield a hive $\{ h_{st} \}$ of type $(\mu,\nu,\lambda)$, that both \emph{also} produce the \emph{same} hive $\{ h_{st}' \}$ of type $(\nu,\mu,\lambda)$ (that is, is the conjectured map from $\Gr \times \Gr$ well-defined?).
\item Given affirmative answers to these questions, does the map reproduce the combinatorial map of James and Kerber~\cite{J-K}?
\end{enumerate}

As stated in the introduction, the precise form of our map (defined by minima or maxima of the orders of invariants of certain submodules) is precisely analogous to a conjectured map relating hives to pairs of Hermitian matrices, first studied by Danilov and Koshevoy~\cite{DK}. The conjecture is apparently still open, but it is our hope that the result here might inspire new avenues for the pursuit of the conjecture. Indeed, our earlier interest in the Hermitian case was led, in part, in an analysis of the effect on the hives associated to Hermitian matrix pairs, under various matrix deformations (rotations of eigenvectors). This analysis suggested an interesting connection between hive deformation and the structure of ${\mathfrak sl}_{n}$ crystals. Even if the Hermitian case remains elusive, our hope would be to take up the hive deformations in the algebraic setting adopted here, where explicit (and discrete) formulas seem more readily available. This analysis might, in turn, shed light on some of the deeper questions considered by Kamnitzer~\cite{kam} relating crystal construction and representation theory under the geometric Satake correspondence.

\end{document}